\documentclass[11pt, twoside]{article}

\usepackage[english]{babel}

\usepackage{bbm}
\usepackage{amssymb}
\usepackage{amsfonts}
\usepackage{amsmath}
\usepackage{amsthm}
\usepackage{color}
\usepackage{mathrsfs}
\usepackage{txfonts}
\usepackage{bbm}
\usepackage{enumerate}
\usepackage{anysize}
\usepackage{indentfirst}
\usepackage{latexsym}
\usepackage{tabularx}

\usepackage[colorlinks=true,
linkcolor=blue,
citecolor=red,
urlcolor=magenta,
]{hyperref}

\allowdisplaybreaks

\newtheorem{theorem}{Theorem}[section]
\newtheorem{lemma}[theorem]{Lemma}

\newtheorem{proposition}[theorem]{Proposition}

\theoremstyle{definition}

\newtheorem{remark}[theorem]{Remark}
\newtheorem{definition}[theorem]{Definition}

\newcounter{assum}

\renewcommand{\appendix}{\par
\setcounter{section}{0}%
\setcounter{subsection}{0}%
\setcounter{subsubsection}{0}%
\gdef\thesection{\@Alph\c@section}%
\gdef\thesubsection{\@Alph\c@section.\@arabic\c@subsection}%
\gdef\theHsection{\@Alph\c@section.}%
\gdef\theHsubsection{\@Alph\c@section.\@arabic\c@subsection}%
\csname appendixmore\endcsname
}

\numberwithin{equation}{section}

\begin{document}

\arraycolsep=1pt

\title{\bf\Large
Fractional Gagliardo--Nirenberg Inequalities: Pointwise Estimates,
Sharp Asymptotics, and Optimal Target Spaces
\footnotetext{\hspace{-0.35cm} 2020
\emph{Mathematics Subject Classification}. Primary 26D10; Secondary
46E35, 42B25, 42B35.
\endgraf \emph{Key words and phrases}.
Fractional Gagliardo--Nirenberg inequality, pointwise estimate,
sharp asymptotics, optimal target space, Calder\'on--Lozanovski\u{\i} space,
ball Banach function space.
\endgraf
This project is partially supported by the National
Natural Science Foundation of China (Grant Nos. 12431006,
12371093, and 124B2004), the Beijing Natural Science Foundation (Grant No. 1262011), and the Fundamental Research Funds for the
Central Universities (Grant No. 2253200028).
}}
\author{Pingxu Hu, Yinqin Li,
Dachun Yang and Wen Yuan}
\date{\today}
\maketitle

\vspace{-0.8cm}

\begin{center}
\begin{minipage}{13cm}
{\small {\bf Abstract}\quad
We establish two pointwise estimates for fractional difference operators,
tracking explicitly the dependence of the constants on the smoothness
index $s\in(0,1)$.
Using these, within the framework of ball Banach function spaces
we obtain two fractional Gagliardo--Nirenberg inequalities, including the
BMO endpoint case. Furthermore, we establish endpoint asymptotic results as $s\to0^+$
and $s\to1^-$, proving that the asymptotic factors appearing in these
inequalities have optimal order.
Under the additional assumption that the underlying function space is rearrangement
invariant, we show that the optimal Gagliardo--Nirenberg target
spaces are precisely those given by
the Calder\'on--Lozanovski\u{\i} space. This completely characterizes the rearrangement
invariant target spaces for which the corresponding Gagliardo--Nirenberg
inequalities hold, thereby answering
an open question posed by K. Le\'snik, T. Roskovec, and F. Soudsk\'y.
These results can be
applied to various function spaces; in particular, they are
completely new in the off-diagonal and BMO cases.}
\end{minipage}
\end{center}

%
\tableofcontents
%

\section{Introduction}

Gagliardo--Nirenberg interpolation inequalities are a fundamental tool in the
study of Sobolev spaces and partial differential equations. They quantify the
interpolation between different orders of differentiability and different levels
of integrability. Since the works of Gagliardo \cite{Gag59} and Nirenberg
\cite{Nir59}, these inequalities have been extended in several directions.
Fractional Sobolev and nonlocal versions have been studied, for example, in
\cite{BM18,DPV12,V23}; extensions to Lorentz, Orlicz, Morrey type, and
Musielak--Orlicz settings can be found in
\cite{HSS20,MNSS12,MNSS13,SW13}; rearrangement invariant Banach function
space versions, together with related optimality questions, were investigated in
\cite{FFRS19,LRS23,LRS25}. Besides their important role in deriving a
priori estimates for solutions to
partial differential equations, such inequalities also appear in the study of
Sobolev chain rules, multilinear operators, endpoint interpolation phenomena,
and refined embeddings in fractional and limiting Sobolev spaces; see, for
example, \cite{BM18,DPV12,MTT02,V23}.

A particularly effective way to obtain such inequalities is through pointwise
estimates involving maximal operators.
For instance, Maz'ya and Shaposhnikova
\cite[Theorem 1]{MS99} proved the following pointwise Gagliardo--Nirenberg
inequality: for any positive integers $j,k$ with $j<k$, any
$f\in W^{k,1}_{\rm loc}$, and almost every $x\in \mathbb{R}^n$,
\begin{align*}
\left| \nabla^j f(x) \right|
\lesssim\left[ M f(x) \right]^{1-\frac{j}{k}}
\left[ M\left(|\nabla^k f|\right)(x) \right]^{\frac{j}{k}},
\end{align*}
where $M$ denotes the classical Hardy--Littlewood maximal operator.
The strength of such pointwise estimates is that they provide a
unified way to derive norm inequalities on a wide class of function spaces:
once a pointwise bound is available, norm inequalities can
be obtained by combining the boundedness of maximal operators and interpolation or lattice
properties of function spaces under consideration. This idea can be traced back, in
particular, to pointwise multiplicative inequalities of Ka{\l}amajska
\cite{Kal94}, and has been used to derive Gagliardo--Nirenberg inequalities in
weighted Orlicz spaces and rearrangement invariant Banach function spaces;
see, for example, \cite{FFRS19,KPP06a,KPP06b,LRS23,LRS25}.
In the integer order setting, the Calder\'on--Lozanovski\u{\i}
space naturally appears in this approach. More
precisely, as proved in \cite{FFRS19} (see also \cite[Theorem 2.1]{LRS23}),
if $j,k$ are positive integers with $j<k$ and if $X$, $Y$ are
suitable rearrangement invariant Banach function spaces, then, for any
$f\in W^{k,1}_{\rm loc}$,
\begin{align}\label{eq-GNinter}
\left\| \nabla^j f \right\|_{X^{1-\frac{j}{k}}Y^{\frac{j}{k}}}
\lesssim\left\| f \right\|_{X}^{1-\frac{j}{k}}
\left\| \nabla^k f \right\|_{Y}^{\frac{j}{k}},
\end{align}
where $X^{1-\frac{j}{k}}Y^{\frac{j}{k}}$ denotes the associated
Calder\'on--Lozanovski\u{\i} space; see Definition \ref{def-CL}.
Moreover, Le\'snik et al.\ \cite[Corollary 2.4]{LRS23} showed that,
in \eqref{eq-GNinter}, the Calder\'on--Lozanovski\u{\i} target space is optimal
among rearrangement invariant Banach function spaces under the \emph{additional
inclusion} assumption $X\subset Y$.

On the other hand, fractional counterparts of pointwise Gagliardo--Nirenberg inequalities have
also attracted considerable attention. Maz'ya and Shaposhnikova
\cite[Lemma]{MS02i} proved that, if $s\in(0,1)$, $p\in[1,\infty)$, and
$f\in W^{1,p}_{\rm loc}$, then, for almost every $x\in \mathbb{R}^n$,
\begin{align}\label{ms1}
\left[
\int_{\mathbb{R}^n}
\frac{|f(x)-f(y)|^p}{|x-y|^{n+sp}}\,dy
\right]^{\frac{1}{p}}
\lesssim\left[ M\left(|f-f(x)|^p\right)(x) \right]^{\frac{1-s}{p}}
\left[ M\left(|\nabla f|^p\right)(x) \right]^{\frac{s}{p}};
\end{align}
see also \cite[Lemma 3.2]{Spe20}. Pointwise and norm inequalities involving sharp maximal functions and
${\rm BMO}$ norms were further developed in
\cite{Lok13,MRR13,Miy20,Str06,V23}. The replacement of an
$L^\infty$ term by a ${\rm BMO}$ term is a meaningful
endpoint improvement because $L^\infty\subsetneqq {\rm BMO}$ and ${\rm BMO}$
norms naturally occur in critical estimates for PDEs. For instance, BMO-based
logarithmic Sobolev and bilinear estimates have been used in regularity and
blow-up criteria for the Euler and Navier--Stokes equations; see
\cite{KT00n,KT00e}.

Since fractional inequalities contain an additional parameter $s\in(0,1)$, it is
also important to understand their limiting behaviour as $s\to0^+$ and
$s\to1^-$. This is closely connected with the Bourgain--Brezis--Mironescu
(for short, BBM) and Maz'ya--Shaposhnikova (for short, MS) limiting formulae
for fractional Sobolev seminorms; see, for example,
\cite{BBM01,BBM02,DGPYYZ24,DM23,KMX05,MS02}.
Thus, it is a natural question to explore a fractional Gagliardo--Nirenberg theory,
which provides inequalities for each $s\in(0,1)$ and also explain the
dependence of the constants on $s$ and
their endpoint behaviour.

The main purpose of this article is to develop such a fractional Gagliardo--Nirenberg theory
within the framework of ball Banach function spaces.
We first establish two pointwise estimates for fractional difference operators, where the associated constants depend explicitly on the smoothness index $s$; see Theorem \ref{thm-ps}.
One estimate is formulated
in terms of the sharp maximal function of $f$ and the Hardy--Littlewood
maximal function of  $|\nabla^k f|$,
while the other extends the inequality \eqref{ms1} of
Maz'ya and Shaposhnikova. Applying these estimates,
we obtain the Calder\'on--Lozanovski\u{\i} norm inequalities
in Theorem \ref{thmGNS}, including the BMO endpoint estimate.
These inequalities establish a uniform dependence on the smoothness index $s$.
We then show BBM and MS type endpoint asymptotic results (see Theorems \ref{thmBBM} and \ref{thmMS}), which prove that this uniform dependence has optimal order as $s\to1^-$ and $s\to0^+$. Subsequently,
for fixed $s\in(0,1)$, Theorem \ref{thm-fs} provides an improved admissible parameter range,
answering the question raised in \cite[Remark 4.14(iii)]{DLYYZ23} in the non-endpoint regime.
Finally, in the rearrangement invariant setting, we show that the optimal target spaces are precisely those given by the Calder\'on--Lozanovski\u{\i} construction.
This optimality argument, based on a separated bump construction,
characterizes the admissible target spaces and answers the open question regarding
Lorentz target norms raised by Le\'snik,  Roskovec, and Soudsk\'y
in \cite[Question 2.6]{LRS23}.
These results can be applied to various function spaces;
in particular, they are completely new in the off-diagonal
and BMO cases (see Section \ref{sec-app}).

To state the main results, we first introduce some notation.
Let the \emph{notation $L^0$} denote the set of all measurable functions.
For any $k\in \mathbb{N}$ and
$h\in \mathbb{R}^n$, the \emph{$k$-th order difference}
$\Delta^k_h f$ of any $f\in L^0$ is defined by setting
\begin{align}\label{eq-sho-hd}
\Delta_{h}^k
f(\cdot):=
\sum_{j=0}^{k}(-1)^{k-j}\binom{k}{j}f(\cdot+jh),
\end{align}
where $\binom{k}{j}:=\frac{k!}{j!(k-j)!}$.
For any $k\in \mathbb{N}$, $s\in (0,1)$, $q\in [1,\infty)$,
and $f\in L^0$, let
\begin{align*}
\mathfrak{D}^{s,k}_{q}(f)(\cdot):=
\left[\int_{\mathbb{R}^n}\frac{|\Delta^k_h f(\cdot)|^q}{|h|^{n+skq}}\,dh\right]^{\frac{1}{q}}.
\end{align*}
For any $p \in [1, \infty)$ and $f\in L^{p}_{\rm loc}$,
the \emph{powered Hardy--Littlewood maximal function} $M_p(f)$ is defined by setting,
for any $x\in \mathbb{R}^n$,
\begin{align*}
M_p(f)(x):=\sup_{Q}
\left[\frac{1}{|Q|}\int_Q|f(y)|^p\,dy\right]^{\frac{1}{p}}
{\bf 1}_{Q}(x),
\end{align*}
and the \emph{powered sharp maximal function} $M_{p}^{\sharp}(f)$ is defined by setting,
for any $x\in \mathbb{R}^n$,
\begin{align*}
M_{p}^{\sharp}(f)(x)
:=\sup_{Q}\left[
\inf_{c\in \mathbb{R}}
\frac{1}{|Q|}\int_Q |f(y)-c|^p\,dy
\right]^{\frac{1}{p}}{\bf 1}_{Q}(x),
\end{align*}
where the suprema are taken over all cubes $Q$ in $\mathbb{R}^n$.
When $p=1$, $M(f):=M_1(f)$ is the classical
\emph{Hardy--Littlewood maximal function} and
$M^{\sharp}(f):=M_{1}^{\sharp}(f)$ is the classical
\emph{sharp maximal function}.

The first main result of this article is the following pointwise estimate
with explicit dependence on the smoothness index. It is the
starting point of all subsequent norm inequalities. Part {\rm(i)} is
an estimate with a sharp maximal function, whereas part {\rm(ii)} is the
other one of the form that recovers the classical Maz'ya--Shaposhnikova estimate.

\begin{theorem}\label{thm-ps}
Let $k\in \mathbb{N}$ and $p_1,p_2,q\in[1,\infty)$.
\begin{enumerate}[{\rm (i)}]
\item Assume that $0<s_0<\widetilde{s_0}\le 1$ satisfy
$$n\left(\frac{1-s_0}{p_1}+\frac{s_0}{p_2}-\frac{1}{q}\right)< s_0 k \quad
\text{and}
\quad n\left(\frac{1-\widetilde{s_0}}{p_1}+\frac{\widetilde{s_0}}{p_2}-\frac{1}{q}\right)< \widetilde{s_0} k.$$
Then, for any $s\in (s_0, \widetilde{s_0})$ and
$f\in W^{k,1}_{\rm loc}$ and for almost every $x\in \mathbb{R}^n$,
\begin{align}\label{eq-tss}
\mathfrak{D}^{s,k}_q (f)(x)
\lesssim
\frac{1}{[(s-s_0)(\widetilde{s_0}-s)]^{\frac{1}{q}}}
\left[M^{\sharp}_{p_1}(f)(x)\right]^{1-s}\left[M_{p_2}\left(\left|\nabla^k f\right|\right)(x)\right]^{s},
\end{align}
where the implicit positive constant depends only on
$n$, $k$, $p_1$, $p_2$, $q$, $s_0$, and $\widetilde{s_0}$.

\item  Assume that $p_1\ge q$ and $n(\frac{1}{p_2}-\frac{1}{q})<k$.
Then, for any $s\in (0,1)$ and
$f\in W^{k,1}_{\rm loc}$ and for almost every $x\in \mathbb{R}^n$,
\begin{align}\label{eq-tss2}
\mathfrak{D}^{s,k}_q (f)(x)
\lesssim
\frac{1}{[s(1-s)]^{\frac{1}{q}}}
\left[M_{p_1}(f-f(x))(x)\right]^{1-s}\left[M_{p_2}\left(\left|\nabla^k f\right|\right)(x)\right]^{s},
\end{align}
where
the implicit positive constant depends only on $n$, $k$, $p_1$, $p_2$, and $q$.
\end{enumerate}

\end{theorem}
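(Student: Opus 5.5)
The plan is to split the $h$-integral defining $\mathfrak{D}^{s,k}_q(f)(x)$ at a radius $R>0$, estimate the small-$|h|$ part by $M_{p_2}(|\nabla^k f|)(x)$ and the large-$|h|$ part by the zeroth-order quantity ($M^\sharp_{p_1}(f)(x)$ in (i), $M_{p_1}(f-f(x))(x)$ in (ii)), and then optimize in $R$. Balancing the two pieces $R^{k-sk}A$ and $R^{-sk}B$ gives $R^k\sim B/A$ and the product $A^{s}B^{1-s}$. The factors $[(s-s_0)(\widetilde{s_0}-s)]^{-1/q}$ and $[s(1-s)]^{-1/q}$ will come from the radial integrals $\int_0^R r^{(k-sk)q-1}\,dr$ and $\int_R^\infty r^{-skq-1}\,dr$, or their perturbed versions.

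\emph{Small $|h|$.} For $|h|<R$ I will use the Taylor-type representation $\Delta_h^k f(x)=\int_{[0,1]^k}\nabla^k f(x+(t_1+\dots+t_k)h)[h,\dots,h]\,dt$, valid for a.e. $x$ when $f\in W^{k,1}_{\rm loc}$. Integrating in polar coordinates, $\int_{|h|<R}|\Delta_h^k f(x)|^q|h|^{-n-skq}\,dh$ is bounded by integrals of $|\nabla^k f|^q$ against a kernel of the form $|x-y|^{kq-skq-n}$ on $B(x,kR)$.

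When $q\le p_2$, Hölder plus a dyadic decomposition gives $\lesssim R^{(1-s)kq}[(1-s)kq]^{-1}[M_{p_2}(|\nabla^k f|)(x)]^q$. When $q>p_2$, the kernel $|x-y|^{kq-skq-n}$ must lie in $L^{(p_2/q)'}$-type spaces locally. This is exactly where the hypothesis $n(\tfrac{1}{p_2}-\tfrac1q)<k$ in (ii), and the condition at $\widetilde{s_0}$ in (i), enter. I would instead write $|\Delta^k_h f(x)|\lesssim |h|^k$ times an average of $|\nabla^k f|$ along the segment, and use a Sobolev-type embedding of order $k$ on cubes to pass from $L^{p_2}$ to the required $L^q$ averages. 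This costs a factor that blows up as the exponent margin closes, giving $(\widetilde{s_0}-s)^{-1/q}$.

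\emph{Large $|h|$.} In (ii) I bound $|\Delta_h^kf(x)|\le\sum_j\binom kj|f(x+jh)-f(x)|$ (the $j=0$ term vanishes since the coefficients sum to zero). Then $\int_{|h|>R}|f(x+jh)-f(x)|^q|h|^{-n-skq}\,dh$ is estimated over dyadic annuli by $\sum_{m\ge0}(2^mR)^{-skq}M_q(f-f(x))(x)^q\lesssim (skq)^{-1}R^{-skq}M_{p_1}(f-f(x))(x)^q$, using $p_1\ge q$.

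In (i) I cannot subtract $f(x)$, so on each annulus of radius $2^mR$ I subtract the average $c_m=f_{Q(x,2^mR)}$. The same sum of coefficients shows $\Delta_h^kf(x)=\sum_j\binom kj(-1)^{k-j}(f(x+jh)-c_m)$; the $j=0$ term $f(x)-c_m$ is problematic. To handle it I integrate in an auxiliary variable: I average the whole estimate over $x'$ in a small cube, or equivalently compare telescoping averages, with $|c_m-c_{m+1}|\lesssim M^\sharp f(x)$ and $|f(x)-c_0|$ controlled via the small-scale gradient estimate. This produces $|c_m-f(x)|\lesssim (m+1)M^\sharp_{p_1}(f)(x)+R^kM_{p_2}(|\nabla^kf|)(x)$. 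The extra polynomial growth in $m$ is harmless against $2^{-mskq}$, but it requires $q\le p_1$, or else an embedding step costing the condition at $s_0$ and the factor $(s-s_0)^{-1/q}$. The general case $q>p_1$ would pass through a perturbed exponent $s_0$, using a John--Nirenberg-type gain of integrability relative to $M^\sharp_{p_1}$.

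\emph{Main obstacle.} The hard step is the off-diagonal (Sobolev-gain) regime of (i): showing that the two exponent conditions at $s_0$ and $\widetilde{s_0}$ suffice, with constants degenerating exactly like $(s-s_0)^{-1/q}$ and $(\widetilde{s_0}-s)^{-1/q}$. My first attempt will be interpolation: prove the estimate at the endpoint exponents with logarithmic margins, and obtain intermediate $s$ via Hölder in $h$ on the split integrals.
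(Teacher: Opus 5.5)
\emph{Verdict: there is a genuine gap in part (i); part (ii) essentially follows the paper.}

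Your split-and-balance architecture matches the paper's.

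\emph{Part (ii).} Your large-$|h|$ estimate is exactly Proposition~\ref{proPoin}(iii), and your small-$|h|$ estimate is Proposition~\ref{proPoin}(ii) with $\widetilde{s_0}=1$. One fix is needed there when $q>p_2$: do not use the segment average, because Jensen then puts $|\nabla^kf|^q$, which need not be locally integrable, inside the integral. Instead, subtract a polynomial $P\in\mathcal{P}_{k-1}$ adapted to $B(x,Ck|h|)$. Apply the higher-order Sobolev--Poincar\'e inequality to the terms $f(x+ih)$ with $i\ge1$, and Lemma~\ref{lem:Poinca} to $f(x)-P(x)$.

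\emph{Part (i): the gap.} You bound the small-$|h|$ piece by $M_{p_2}(|\nabla^kf|)(x)$ alone, and the large-$|h|$ piece by $M^\sharp_{p_1}(f)(x)$ plus a pure gradient term. That works when $q\le p_1$ and $n(\frac1{p_2}-\frac1q)\le k$. However, the hypotheses of (i) only say that the affine function $F(t):=n(\frac{1-t}{p_1}+\frac t{p_2}-\frac1q)-tk$ is negative at $s_0$ and $\widetilde{s_0}$. This implies neither $F(0)\le0$ nor $F(1)\le0$. Theorems~\ref{thmGNS}(i) (when $q\ge p_X$) and~\ref{thm-fs} rely on precisely these regimes, and there both of your one-sided bounds are false.

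\emph{Counterexample for the small-$|h|$ piece.} Take $n=3$, $k=1$, $p_1=30$, $p_2=1$, $q=3$. Then $F(t)=1.9t-0.9$, so $s_0=0.1$ and $\widetilde{s_0}=0.4$ are admissible. Let $f:=\chi|\cdot-y_0|^{-a}$ with $a\in(1,2)$ and $\chi\in C_{\rm c}^\infty$ equal to $1$ near $y_0$. Then $f\in W^{1,1}_{\rm loc}$ and $M(|\nabla f|)(x)<\infty$ for $x\ne y_0$. Yet $\int_{|h|<R}|f(x+h)-f(x)|^3|h|^{-3-3s}\,dh=\infty$ once $R>|x-y_0|$, because $f\notin L^3$ near $y_0$. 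The theorem survives only because $M^\sharp_{30}(f)(x)=\infty$. The condition at $\widetilde{s_0}$ involves $p_1$, so no pure-gradient embedding can exploit it.

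\emph{The large-$|h|$ piece.} Symmetrically, for $q>p_1$ the averages $\fint_{Q(x,2^mR)}|f-c_m|^q$ are not controlled by $M^\sharp_{p_1}(f)(x)$. John--Nirenberg cannot help: $M^\sharp_{p_1}(f)(x)$ sees only cubes containing $x$, not their subcubes (consider $\chi|\cdot-y_0|^{-b}$ with $bp_1<n\le bq$). Such an argument would also never use $p_2$, which the condition at $s_0$ involves.

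\emph{The missing idea.} Interpolate the two Poincar\'e inequalities cube by cube,
$E_{k-1}(f,Q)\lesssim[\fint_Q|f-f_Q|]^{1-\sigma}[\ell(Q)^k\fint_Q|\nabla^kf|]^{\sigma}$,
inside the telescoping bound of Lemma~\ref{lem-point}. Then \emph{both} pieces become mixed products (Proposition~\ref{proPoin}(i),(ii)):
the part $|h|\ge r$ is $\lesssim r^{k(s_0-s)}(s-s_0)^{-1/q}[M^\sharp_{p_1}(f)(x)]^{1-s_0}[M_{p_2}(|\nabla^kf|)(x)]^{s_0}$, and the part $|h|\le r$ is the same with $\widetilde{s_0}$ in place of $s_0$.

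The gain of integrability from $(p_1,p_2)$ to $q$ comes from a counting step, not an embedding theorem. The $\sim2^{jn}$ cubes $Q$ with $J\in\operatorname{Dom}(2^jQ)$ are summed via Lemma~\ref{lem-discrete}. This costs $2^{jnq(\frac{1-\sigma}{p_1}+\frac{\sigma}{p_2}-\frac1q)_+}$ against the decay $2^{-jq(\sigma k-\epsilon)}$, which is exactly where the two strict hypotheses are used.

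Balancing the two mixed products in $r$ gives $[M^\sharp_{p_1}(f)(x)]^{1-s}[M_{p_2}(|\nabla^kf|)(x)]^s$. The factors $(s-s_0)^{-1/q}$ and $(\widetilde{s_0}-s)^{-1/q}$ come from the geometric series over scales above and below $r$, not from degenerating embedding constants. H\"older in $h$ between endpoint exponents, as in your last sentence, cannot create this product structure.
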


\begin{remark}\label{rem-1.2}
\begin{enumerate}[{\rm (i)}]
\item When $q=p_1=p_2=:p$, Theorem \ref{thm-ps}{\rm(ii)} gives that, for any
$s\in(0,1)$, any $f\in W^{k,1}_{\rm loc}$, and almost every
$x\in\mathbb{R}^n$,
\begin{align}\label{eq-111}
\mathfrak{D}^{s,k}_{p}(f)(x)
&\lesssim [s(1-s)]^{-\frac{1}{p}}
\left[M_p(f-f(x))(x)\right]^{1-s}
\left[M_p\left(\left|\nabla^k f\right|\right)(x)\right]^{s},
\end{align}
where the implicit positive constant is independent of $s$, $f$, and $x$.
In particular, when $k=1$, estimate \eqref{eq-111} recovers the
pointwise estimate of Maz'ya and Shaposhnikova in \eqref{ms1}, including
the asymptotic factor $[s(1-s)]^{-\frac1p}$ for $p\in[1,\infty)$.
The cases with other parameters $q,p_1,p_2$, as well as the
higher order case $k\ge2$, appear to be new.

\item From the point of view of parameters, Theorem \ref{thm-ps}{\rm(ii)}
is closely related to the formal endpoint $s_0=0$ of the assumption
$n(\frac{1-s_0}{p_1}+\frac{s_0}{p_2}-\frac1q)<s_0k$
appearing in Theorem \ref{thm-ps}{\rm(i)}. Indeed, letting $s_0=0$ in this
assumption, we obtain
$n(\frac1{p_1}-\frac1q)<0,$
which is equivalent to $p_1>q$. Thus, this formal endpoint gives the range
of $p_1,q$ in Theorem \ref{thm-ps}{\rm(ii)} except for the
endpoint case $p_1=q$. Even so, we still cannot replace $M_{p_1}(f-f(x))(x)$ in
\eqref{eq-tss2} by the sharp maximal function $M_{p_1}^{\sharp}(f)(x)$, as
in \eqref{eq-tss}, with a constant independent of $s$; see Proposition
\ref{pro-counter}.
\end{enumerate}
\end{remark}

To obtain the pointwise estimates in Theorem \ref{thm-ps} for fractional
difference operators with general parameters, we refine the localization
arguments used in \cite{DGPYYZ24,HLYY-ineq} for weighted norm inequalities
and adapt them to the present pointwise setting.
The proof proceeds through three main steps.
First, we use shifted dyadic grids
to prove a telescopic estimate for the difference operator; see Lemma
\ref{lem-point}. Then we combine this estimate with suitable Poincar\'e
inequalities, and sum over different scales of dyadic cubes, keeping track of the
precise dependence on the smoothness index.
Finally, in the present case (namely the pointwise setting),
a new difficulty appears: the point
$x$ under consideration may not belong to the cubes over which the
averages in the telescopic estimate are taken;
see, for instance, \eqref{eq-Hqj}. Hence, these averages
cannot be controlled directly by maximal operators. We
overcome this possible mismatch by a further use of shifted dyadic grids, together
with quantitative covering estimates; see \eqref{eq-dom},
\eqref{eq-Qvol}, and \eqref{eq-tt}.

As a consequence, we obtain the following norm inequality. It retains
the sharp endpoint dependence on the smoothness index
$s$. The use of the sharp maximal function also allows us to include
a $\operatorname{BMO}$ norm for the lower order term. The terminology
used in the following theorem is presented in
Section \ref{sec-poin}.

\begin{theorem}\label{thmGNS}
Let $X$ and $Y$ be ball Banach function spaces whose lower generalized Boyd
indices are respectively $p_X$ and $p_Y$ (see Definition \ref{def-Boyd}).
Let $k\in \mathbb{N}$ and $q\in [1,\infty)$. Assume that $p_Y\in (1,\infty]$ and
$n(\frac{1}{p_Y}-\frac{1}{q})< k$.
\begin{enumerate}[{\rm(i)}]
\item  If $p_X\in (1,\infty]$, then
there exists $s_0\in (0,1)$ such that,
for any $s\in (s_0,1)$ and $f \in X\cap \dot{W}^{k,Y}$,
\begin{align}\label{eq-gns1}
\left\|
\mathfrak{D}^{s,k}_q (f)
\right\|_{X^{1-s}Y^{s}}
&\lesssim
[(s-s_0)(1-s)]^{-\frac{1}{q}}
\left\| f\right\|_X^{1-s}
\left\|\nabla^k f\right\|_{Y}^{s},
\end{align}
where the implicit positive constant is independent of $s$ and $f$. Moreover, if $q\in[1,p_X)$,
then, for any $s\in (0,1)$ and $f \in X\cap \dot{W}^{k,Y}$,
\begin{align}\label{eq-gns2}
\left\|
\mathfrak{D}^{s,k}_q (f)
\right\|_{X^{1-s}Y^{s}}
&\lesssim
[s(1-s)]^{-\frac{1}{q}}
\left\| f\right\|_X^{1-s}
\left\|\nabla^k f\right\|_{Y}^{s},
\end{align}
where the implicit positive constant is independent of $s$ and $f$.

\item
There exists $s_0\in (0,1)$ such that,
for any $s\in (s_0,1)$ and $f \in \operatorname{BMO}
\cap\dot{W}^{k,Y}$,
\begin{align*}
\left\|
\mathfrak{D}^{s,k}_q (f)
\right\|_{Y^{\frac1s}}
&\lesssim
[(s-s_0)(1-s)]^{-\frac{1}{q}}
\|f\|_{\operatorname{BMO}}^{1-s}
\left\|\nabla^k f\right\|_{Y}^{s},
\end{align*}
where the implicit positive constant is independent of $s$ and $f$.
\end{enumerate}
\end{theorem}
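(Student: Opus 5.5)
The plan is to deduce Theorem \ref{thmGNS} from the pointwise estimates of Theorem \ref{thm-ps}. We combine them with the boundedness of powered maximal operators on $X$ and $Y$, which is encoded in the Boyd indices, and with the Hölder-type property of the Calder\'on--Lozanovski\u{\i} space. That property states that, for nonnegative $g\in X$ and $h\in Y$,
\begin{align*}
\left\|g^{1-s}h^{s}\right\|_{X^{1-s}Y^{s}}\le \|g\|_X^{1-s}\|h\|_Y^{s},
\end{align*}
which is immediate from Definition \ref{def-CL}.

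For (i), since $p_X>1$ and $p_Y>1$, we choose $p_1\in[1,p_X)$ and $p_2\in[1,p_Y)$ so that $M_{p_1}$ is bounded on $X$ and $M_{p_2}$ is bounded on $Y$. We also want $n(\frac1{p_2}-\frac1q)<k$; this is possible because $n(\frac1{p_Y}-\frac1q)<k$ is a strict inequality, so $p_2$ can be taken close to $p_Y$. We then take $\widetilde{s_0}=1$. The second condition of Theorem \ref{thm-ps}(i) at $\widetilde{s_0}=1$ is exactly $n(\frac1{p_2}-\frac1q)<k$. The function $\sigma\mapsto n(\frac{1-\sigma}{p_1}+\frac{\sigma}{p_2}-\frac1q)-\sigma k$ is affine and negative at $\sigma=1$, so it is negative on some interval $(s_0,1]$ with $s_0\in(0,1)$. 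Theorem \ref{thm-ps}(i) then gives, almost everywhere,
\begin{align*}
\mathfrak{D}^{s,k}_q(f)\lesssim [(s-s_0)(1-s)]^{-\frac1q}\left[M^\sharp_{p_1}(f)\right]^{1-s}\left[M_{p_2}\left(\left|\nabla^kf\right|\right)\right]^{s}.
\end{align*}
Using the pointwise bound $M^\sharp_{p_1}(f)\le 2M_{p_1}(f)$, the lattice property, the Hölder-type inequality, and the maximal bounds, we get \eqref{eq-gns1}.

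For \eqref{eq-gns2}, where $q<p_X$, we instead use Theorem \ref{thm-ps}(ii) with $p_1\in[q,p_X)$ and $p_2$ as above. The key pointwise bound is
\begin{align*}
M_{p_1}(f-f(x))(x)\le M_{p_1}(f)(x)+|f(x)|\le 2M_{p_1}(f)(x)
\end{align*}
for almost every $x$, by Lebesgue differentiation. The same chain of steps then yields the factor $[s(1-s)]^{-1/q}$ for all $s\in(0,1)$.

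For (ii), we apply Theorem \ref{thm-ps}(i) again, now with $p_1=1$ (or any finite $p_1$), $p_2<p_Y$, and $\widetilde{s_0}=1$, choosing $s_0$ exactly as in (i). We use $M^\sharp_{p_1}(f)\lesssim\|f\|_{\operatorname{BMO}}$ pointwise, which follows from the John--Nirenberg inequality when $p_1>1$. This gives
\begin{align*}
\mathfrak{D}^{s,k}_q(f)\lesssim [(s-s_0)(1-s)]^{-\frac1q}\|f\|_{\operatorname{BMO}}^{1-s}\left[M_{p_2}\left(\left|\nabla^kf\right|\right)\right]^s.
\end{align*}
Taking the $Y^{1/s}$ quasi-norm, we have $\|h^s\|_{Y^{1/s}}=\|h\|_Y^s$ by the definition of convexification, and the bound on $M_{p_2}$ in $Y$ finishes the argument.

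The main technical obstacle is choosing $p_1,p_2$ and $s_0$ simultaneously so that three things hold: the hypotheses of Theorem \ref{thm-ps} are met, the powered maximal operators are bounded, and all implicit constants are independent of $s$. The last point needs care because the constants in Theorem \ref{thm-ps}(i) depend on $s_0$ and $\widetilde{s_0}$, so these must be fixed before $s$ varies. A secondary point is the precise meaning of $X^{1-s}Y^s$ for ball Banach function spaces, in particular checking that the Hölder-type inequality and the lattice property hold with constant $1$ uniformly in $s$. I expect this to follow from the definition of the space as the set of $|g|\le\lambda|g_0|^{1-s}|g_1|^{s}$ with $\|g_0\|_X,\|g_1\|_Y\le1$.
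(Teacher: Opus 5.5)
Your proposal is correct and follows the paper's proof essentially step by step. Estimate \eqref{eq-gns1} comes from Theorem \ref{thm-ps}(i) with $\widetilde{s_0}=1$, together with $M^\sharp_{p_1}(f)\lesssim M_{p_1}(f)$ and the maximal bounds on $X^{1/p_1}$ and $Y^{1/p_2}$. Estimate \eqref{eq-gns2} comes from Theorem \ref{thm-ps}(ii) combined with $M_{p_1}(f-f(x))(x)\le 2M_{p_1}(f)(x)$. Part (ii) uses the $\operatorname{BMO}$ bound on $M^\sharp_{p_1}(f)$ with the $Y^{1/s}=(L^\infty)^{1-s}Y^s$ norm. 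The differences from the paper (fixing $p_1,p_2$ before $s_0$, and allowing the endpoint choices $p_1=q$ or $p_1=1$) are cosmetic.

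The one point you leave implicit is that $f\in\dot W^{k,Y}$ actually lies in $W^{k,1}_{\rm loc}$, which you need in order to invoke Theorem \ref{thm-ps}. The paper obtains this from Remark \ref{rem-Boyd}(ii), using $p_Y>1$.
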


\begin{remark}
\begin{enumerate}[{\rm (i)}]

\item
The range of $q$ in Theorem \ref{thmGNS} is sharp in some sense. More
precisely, the estimates \eqref{eq-gns1} and \eqref{eq-gns2} may fail when
$n(\frac{1}{p_Y}-\frac{1}{q})\ge k.$
Moreover, even when
$n(\frac{1}{p_Y}-\frac{1}{q})<k$, estimate \eqref{eq-gns2} may fail if
$q\ge p_X$; see Section \ref{sec-sharp} for details.

\item Observe that \cite[Theorem 4.1]{HLYY-ineq} with $\Omega=\mathbb{R}^n$
and the non-endpoint case $p_0\in(1,\infty)$ is a special case of Theorem \ref{thmGNS}
in the diagonal case $X=Y$ and, in this case,
the assumption on the underlying function space of Theorem \ref{thmGNS}
is weaker; see Remark \ref{rem-HLYY-assumption} for the detailed comparison.
However, the domain and the endpoint cases treated in
\cite[Theorem 4.1]{HLYY-ineq} are not covered
by Theorem \ref{thmGNS}. Beyond the diagonal setting, the off-diagonal
case $X\ne Y$ and the BMO endpoint estimate in Theorem
\ref{thmGNS}{\rm(ii)} appear to be new. Moreover, when the derivative
space is an ordinary Lebesgue space, this BMO endpoint estimate is
further sharpened in Theorem \ref{thm-L} by exploiting
the symmetry in the order of integration.

\item The restrictions $p_X,p_Y>1$ in Theorem \ref{thmGNS} mainly come
from the use of pointwise estimates involving maximal operators, whose
boundedness may fail at the endpoints.
To reach endpoint spaces, one would need different pointwise tools. In
the integer order setting,
\cite[Theorem 1.2]{LRS25} provided such a new sparse pointwise estimate and used it to
handle rearrangement invariant Gagliardo--Nirenberg inequalities. Whether
an analogous pointwise principle is available for the fractional
inequalities considered here remains an interesting problem.
\end{enumerate}
\end{remark}

The orders of the asymptotic factors as $s\to0^+$ and $s\to1^-$ in Theorem \ref{thmGNS}
are optimal. To show this, we establish endpoint asymptotic estimates,
namely BBM type formulae as $s\to1^-$ and
MS type estimates as $s\to0^+$, on Calder\'on--Lozanovski\u{\i} spaces
$X^{1-s}Y^s$. To do so, we give some necessary notation.
For any $k\in\mathbb{N}$, $q\in[1,\infty)$, $f\in W^{k,1}_{\rm loc}$, and
$x\in\mathbb{R}^n$, define
\begin{align}\label{eq-Dkq}
\mathbb{D}^{k}_q(f)(x):=
\left[\int_{\mathbb{S}^{n-1}}
\left|
\sum_{\alpha\in\mathbb{Z}_+^n,\,|\alpha|=k}
\frac{k!}{\alpha!}\partial^\alpha f(x)\xi^\alpha
\right|^q\,d\mathcal{H}^{n-1}(\xi)
\right]^{\frac1q}
\end{align}
and
\begin{align}\label{eq-Knq}
K_{n,q}:=
\left[
\frac{2\pi^{\frac{n-1}{2}}\Gamma(\frac{q+1}{2})}
{q\Gamma(\frac{q+n}{2})}
\right]^{\frac1q},
\end{align}
where $\Gamma$ denotes the Gamma function.

\begin{theorem}\label{thmBBM}
Let $k\in\mathbb{N}$ and both $X$ and $Y$ be ball Banach function spaces having
absolutely continuous norms (see Definition \ref{def-ac}). Assume that $p_X,p_Y\in(1,\infty)$. Let
$q\in[1,\infty)$ satisfy $n(\frac1{p_Y}-\frac1q)<k$. Then, for any
$f\in X\cap W^{k,Y}$,
\begin{align*}
\lim_{s\to1^-}(1-s)^{\frac1q}
\left\|\mathfrak{D}^{s,k}_q(f)\right\|_{X^{1-s}Y^s}
=(kq)^{-\frac1q}\left\|\mathbb{D}^k_q(f)\right\|_Y.
\end{align*}
Moreover, if $k=1$, then, for any $f\in X\cap W^{1,Y}$,
\begin{align*}
\lim_{s\to1^-}(1-s)^{\frac1q}
\left\|\mathfrak{D}^{s}_q(f)\right\|_{X^{1-s}Y^s}
=K_{n,q}\left\|\nabla f\right\|_Y,
\end{align*}
where $K_{n,q}$ is as in \eqref{eq-Knq}.
\end{theorem}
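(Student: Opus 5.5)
We prove the BBM type formula in three stages: a pointwise limit, a uniform domination, and then passage of the limit through the Calder\'on--Lozanovski\u{\i} norm, where the varying exponent $s$ must be handled.

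\emph{Pointwise limit.} For $f\in C_c^\infty$, we show that for every $x$
$$
\lim_{s\to1^-}(1-s)^{\frac1q}\mathfrak{D}^{s,k}_q(f)(x)=(kq)^{-\frac1q}\mathbb{D}^k_q(f)(x).
$$
Split the $h$-integral into $|h|<\delta$ and $|h|\ge\delta$. The far part carries at most $C_\delta$ times a fixed tail and is killed by the factor $(1-s)$. In the near part, Taylor's formula gives
$$
\Delta^k_hf(x)=\sum_{|\alpha|=k}\frac{k!}{\alpha!}\partial^\alpha f(x)h^\alpha+O\bigl(|h|^{k+1}\bigr).
$$
Passing to polar coordinates $h=r\xi$ reduces the near part to
$$
\int_0^\delta r^{kq-skq-1}\,dr=\frac{\delta^{kq(1-s)}}{kq(1-s)},
$$
which yields the constant $(kq)^{-1}$ times $\mathbb{D}^k_q(f)(x)^q$. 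For $k=1$, rotation invariance gives $\int_{\mathbb{S}^{n-1}}|\nabla f\cdot\xi|^q\,d\mathcal{H}^{n-1}(\xi)=|\nabla f|^q\int_{\mathbb{S}^{n-1}}|\xi_1|^q\,d\mathcal{H}^{n-1}(\xi)$. The standard Beta-function computation shows $q^{-1}\int_{\mathbb{S}^{n-1}}|\xi_1|^q\,d\mathcal{H}^{n-1}=K_{n,q}^q$, which produces the second identity. For general $f\in W^{k,Y}$, with $\nabla^kf$ continuous only almost everywhere, we instead use Lebesgue points of $\nabla^k f$: write $\Delta^k_hf$ as an integral of $\nabla^kf$ along $k$-fold segments via the integral form of the remainder. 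Alternatively, we prove the limit for smooth $f$ and transfer it to general $f$ by density, using the uniform bound below. We plan the density route.

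\emph{Uniform domination.} Apply Theorem \ref{thm-ps}(i) with $\widetilde{s_0}=1$. The assumption $n(\frac1{p_Y}-\frac1q)<k$ lets us pick $p_2<p_Y$ close to $p_Y$, and $p_1<p_X$, such that some $s_0<1$ satisfies the hypotheses. For $s\in((1+s_0)/2,1)$ this gives
$$
(1-s)^{\frac1q}\mathfrak{D}^{s,k}_q(f)\lesssim\bigl[M_{p_1}^\sharp f\bigr]^{1-s}\bigl[M_{p_2}(|\nabla^kf|)\bigr]^s\le\bigl[2M_{p_1}f\bigr]^{1-s}\bigl[M_{p_2}(|\nabla^kf|)\bigr]^s=:G_s.
$$
By boundedness of $M_{p_1}$ on $X$ and $M_{p_2}$ on $Y$ (generalized Boyd indices), together with the definition of $X^{1-s}Y^s$, we get $\|G_s\|_{X^{1-s}Y^s}\lesssim\|f\|_X^{1-s}\|\nabla^kf\|_Y^s$. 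Combined with the pointwise limit for smooth $g$ and the sublinearity of $\mathfrak{D}$, this gives density: approximate $f$ by $g$ in $X\cap W^{k,Y}$, which is possible by the absolute continuity of the norms, and note $|\mathfrak{D}(f)-\mathfrak{D}(g)|\le\mathfrak{D}(f-g)$. The error is then bounded by $\|f-g\|_X^{1-s}\|\nabla^k(f-g)\|_Y^s$, uniformly in $s$ near $1$.

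\emph{Passing the limit through the norm.} This is the main obstacle, since the space $X^{1-s}Y^s$ itself varies with $s$. We prove the two inequalities separately.

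For the upper bound, set $F_s:=(1-s)^{1/q}\mathfrak{D}^{s,k}_q(g)$ and $F:=(kq)^{-1/q}\mathbb{D}^k_q(g)$. Using the factorization $|F_s|\le|u|^{1-s}|v|^s$ with $u=G$ of fixed size in $X$ and $v=F+\varepsilon$-type majorants in $Y$, we get
$$
\|F_s\|_{X^{1-s}Y^s}\le\|A\|_X^{1-s}\|B_s\|_Y^s,
$$
where $B_s\ge F_s^{1/s}A^{-(1-s)/s}$. Because $F_s\to F$ with a dominating function in $Y$, absolute continuity of the norm of $Y$ and dominated convergence give $\limsup_{s\to1^-}\|F_s\|_{X^{1-s}Y^s}\le\|F\|_Y$; the factor $\|A\|_X^{1-s}$ tends to $1$.

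For the lower bound, we use Lozanovski\u{\i} duality, $(X^{1-s}Y^s)'=(X')^{1-s}(Y')^s$, and test against $\phi^{s}\psi^{1-s}$. Here $\phi\in Y'$ is a near-norming function for $F$ and $\psi\in X'$ is fixed with $\|\psi\|_{X'}\le1$. Fatou's lemma then gives $\liminf_{s\to1^-}\|F_s\|_{X^{1-s}Y^s}\ge\int F\phi\approx\|F\|_Y$. A truncation of $\phi$ to a bounded set where $\psi>0$ handles the degeneracy of $\psi^{1-s}$; this is where absolute continuity of the norm of $X$ is used.
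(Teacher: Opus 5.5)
Your argument is correct in outline but follows a different route from the paper. The paper splits $\mathfrak D^{s,k}_q$ at $|h|=1$. For $g\in C_{\rm c}^\infty$, the inner part is bounded by a multiple of $\mathbf 1_K$ for a fixed compact $K$. This yields the interpolation bound $\|\mathbf 1_K\|_X^{1-s}\|\cdot\|_Y^s$ of Lemma \ref{lem-con}. The varying target space is then handled by Lemma \ref{lem-equa}, namely $\|G\|_{X^{1-s}Y^s}\to\|G\|_Y$ for a fixed $G\in X\cap Y$, proved by testing against a fixed $w\in X'\cap Y'$. The outer part is shown separately to be $o((1-s)^{-1/q})$ in norm (Proposition \ref{proOut}). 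This uses \eqref{eq-Pout}, with constant $(s-s_0)^{-1/q}$, and a reiteration identity for $X^{1-s}Y^s$.

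You avoid the splitting entirely. The pointwise limit already kills the far part. The upper bound comes from the factorization $F_s=A^{1-s}B_s^s$ plus dominated convergence in $Y$. The lower bound comes from Fatou's lemma with the $s$-dependent test functions $\psi^{1-s}\phi^s$; only the H\"older half of Lozanovski\u{\i} duality is needed. This is shorter and needs neither Proposition \ref{proOut} nor the reiteration identity. The price is that you apply Theorem \ref{thm-ps} to the smooth approximant itself. The paper's inner-part argument uses only compact support and boundedness, and Lemma \ref{lem-equa} is reused for Theorem \ref{thmMS}.

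The upper bound as you wrote it needs tightening. A dominating function for $F_s$ in $Y$ is not what you need. Dominated convergence must be applied to $B_s=F_s^{1/s}A^{-(1-s)/s}$, and whether $B_s$ is dominated depends on the choice of $A$.

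\begin{itemize}
\item Take $A:=2M_{p_1}g$, which is strictly positive and lies in $X$.
\item For $s\in(\frac{1+s_0}{2},1)$, Theorem \ref{thm-ps}(i) gives $F_s\le C A^{1-s}[M_{p_2}(|\nabla^k g|)]^s$ almost everywhere.
\item Hence $B_s\le C^{1/s}M_{p_2}(|\nabla^k g|)\in Y$, and $B_s\to F$ almost everywhere, so the argument closes.
\end{itemize}

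You cannot instead take $A$ to be a common majorant of $F_s$ in $X$. Nor can you use the shortcut $\|F_s-F\|_X^{1-s}\|F_s-F\|_Y^s$ of Lemma \ref{lem-con}. The full $F_s$ need not lie in $X$ at all. For the function of Lemma \ref{lem-sharp-tail}, the tail is of order $|x|^{-\frac nq-sk}$. This is not in $X=L^p$ when $n(\frac1p-\frac1q)\ge k$, which is allowed since only $n(\frac1{p_Y}-\frac1q)<k$ is assumed. This is exactly why the paper separates off the outer part.

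Two smaller points. First, absolute continuity of $\|\cdot\|_X$ plays no role in your lower bound. You can take $\psi=\mathbf 1_B$ with $B\supset\operatorname{supp}g$, which lies in $X'$ by Definition \ref{def-X}(v). Absolute continuity enters only in the density step. Second, that density step also needs uniform boundedness of mollifiers on $X$ and $Y$, which follows from $p_X,p_Y>1$.
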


\begin{remark}\label{rem-imp}
\begin{enumerate}[{\rm(i)}]
\item
We point out that
\begin{align}\label{eq-ssim}
\mathbb{D}^{k}_q(f)(x)\sim |\nabla^k f(x)|,
\end{align}
where the positive equivalence constants depend only on $n$, $k$, and
$q$. In particular, when $k=1$,
\begin{align*}
\mathbb{D}^{1}_q(f)(x)
=
|\nabla f(x)|
\left[
\frac{2\pi^{\frac{n-1}{2}}\Gamma(\frac{q+1}{2})}
{\Gamma(\frac{q+n}{2})}
\right]^{\frac1q};
\end{align*}
see \cite[Lemma 4.7]{HLYYZ-bsvy}. Hence, Theorem \ref{thmBBM} and
\eqref{eq-ssim} prove that, for every admissible $f$ with
$\nabla^k f\not\equiv0$,
\begin{align*}
\left\|\mathfrak D_q^{s,k}(f)\right\|_{X^{1-s}Y^s}
\sim
(1-s)^{-\frac1q}\left\|\nabla^k f\right\|_Y
\end{align*}
as $s\to1^-$. Consequently, the factors
$[(s-s_0)(1-s)]^{-\frac1q}$ in \eqref{eq-gns1} and
$[s(1-s)]^{-\frac1q}$ in \eqref{eq-gns2} have optimal order as
$s\to1^-$.

\item Observe that \cite[Theorem 5.5]{HLYY-ineq} with $\Omega=\mathbb{R}^n$
and the non-endpoint case $p\in(1,\infty)$ is a special case of Theorem \ref{thmBBM}
in the diagonal case $X=Y$ and, in this case,
the assumption on the underlying function space of Theorem \ref{thmBBM}
is weaker; see Remark \ref{rem-HLYY-assumption} for the detailed comparison.
However, the domain and the endpoint cases treated in
\cite[Theorem 5.5]{HLYY-ineq} are not covered
by Theorem \ref{thmBBM}. Beyond the diagonal setting, the off-diagonal
case $X\ne Y$ in Theorem \ref{thmBBM} appear to be new.

\item We also clarify a
minor difference between the higher order quantity
\eqref{eq-Dkq} used here and the one
in \cite[Theorem 5.5]{HLYY-ineq}. In the present article, the limiting
quantity is defined with the factor $\frac{k!}{\alpha!}$ in the
multi-index summation. This factor appears when the
expansion
\begin{align*}
\sum_{j_1=1}^n\cdots\sum_{j_k=1}^n
\xi_{j_1}\cdots\xi_{j_k}
\partial_{j_1}\cdots\partial_{j_k}f
\end{align*}
is rewritten in multi-index notation. Indeed,
for each $\alpha\in\mathbb{Z}_+^n$ with $|\alpha|=k$, let
\begin{align*}
I_\alpha
:=
\left\{
(j_1,\ldots,j_k)\in\{1,\ldots,n\}^k:
\#\{\ell\in\{1,\ldots,k\}:j_\ell=i\}=\alpha_i
\text{ for every }i\in\{1,\ldots,n\}
\right\}.
\end{align*}
Then
$\# I_\alpha
=
\frac{k!}{\alpha_1!\cdots\alpha_n!}
=
\frac{k!}{\alpha!}.$
Consequently,
\begin{align*}
\sum_{j_1=1}^n\cdots\sum_{j_k=1}^n
\xi_{j_1}\cdots\xi_{j_k}
\partial_{j_1}\cdots\partial_{j_k}f
=
\sum_{\alpha\in\mathbb{Z}_+^n,\,|\alpha|=k}
\frac{k!}{\alpha!}
\partial^\alpha f\xi^\alpha.
\end{align*}
Thus, the
multi-index version of \cite[Lemma 5.7]{HLYY-ineq} should contain this
combinatorial factor, and we use this corrected normalization here.
\end{enumerate}
\end{remark}

The following theorem gives the MS type estimates as $s\to0^+$ and
shows that the factor $s^{\frac1q}$ in \eqref{eq-gns2} has the optimal order.
Detailed notation and definitions appear in Subsection \ref{secMS}.

\begin{theorem}
\label{thmMS}
Let $k\in\mathbb N$, $q\in [1,\infty)$, and $X$ and $Y$ be ball Banach function spaces.
Assume that $X^{\frac{1}{q}}$ and $Y^{\frac{1}{q}}$ are ball Banach function spaces and that the centered ball average operators $\{\mathcal{B}_r\}_{r\in (0,\infty)}$
are uniformly bounded on $X^{\frac{1}{q}}$ and $Y^{\frac{1}{q}}$.
Then, for any $f\in [\bigcup_{\sigma\in (0,k)}{F}^{\sigma,k}_{X,q}]\cap
[\bigcup_{\sigma\in (0,k)}{F}^{\sigma,k}_{Y,q}]$ with compact support,
\begin{align}\label{eq-XX3}
\left(\frac{1}{kq}\left|\mathbb{S}^{n-1}\right|\right)^{\frac{1}{q}}
\left\|f\right\|_{X}
&\le \liminf_{s \to 0^+}s^{\frac{1}{q}}
\left\|
\mathfrak{D}^{s,k}_q (f)
\right\|_{X^{1-s}Y^{s}}\notag\\
&\le \limsup_{s\to 0^+}s^{\frac{1}{q}}
\left\|
\mathfrak{D}^{s,k}_q (f)
\right\|_{X^{1-s}Y^{s}}
\lesssim \left\|f\right\|_X,
\end{align}
where the implicit positive constant is independent of $f$.
\end{theorem}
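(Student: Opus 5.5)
We split $\mathfrak{D}^{s,k}_q(f)(x)$ according to $|h|<1$ and $|h|\ge1$:
\begin{align*}
\left[\mathfrak{D}^{s,k}_q(f)(x)\right]^q
=\int_{|h|<1}\frac{|\Delta^k_hf(x)|^q}{|h|^{n+skq}}\,dh
+\int_{|h|\ge1}\frac{|\Delta^k_hf(x)|^q}{|h|^{n+skq}}\,dh
=:I_s(x)+J_s(x).
\end{align*}
The lower bound and the upper bound for the main part both come from $J_s$. In $J_s$ we expand $\Delta^k_hf(x)=(-1)^kf(x)+\sum_{j=1}^k(-1)^{k-j}\binom kj f(x+jh)$. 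Since $\int_{|h|\ge1}|h|^{-n-skq}\,dh=|\mathbb{S}^{n-1}|/(skq)$, the term $(-1)^kf(x)$ contributes exactly $(\frac{|\mathbb{S}^{n-1}|}{skq})^{1/q}|f(x)|$ to $J_s(x)^{1/q}$.

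For the remaining terms we use the compact support of $f$, say $\operatorname{supp}f\subset B(0,R)$. For each fixed $x$, the set $\{h:\ |h|\ge1,\ x+jh\in B(0,R)\}$ has measure at most $C R^n$. The $L^q$ norm in $h$ of the shifted terms, weighted by $|h|^{-n-skq}\le 1$, is therefore bounded by $C\,[\mathcal{B}_{R'}(|f|^q)(x')]^{1/q}$-type averages, with no factor $s^{-1}$. More carefully, $\int_{|h|\ge1}|f(x+jh)|^q|h|^{-n-skq}\,dh$ is dominated by a sum over dyadic annuli $2^\ell\le|h|<2^{\ell+1}$ of $2^{-\ell skq}\,\mathcal{B}_{j2^{\ell+1}}(|f|^q)(x)$. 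Uniform boundedness of $\mathcal{B}_r$ on $X^{1/q}$ then gives a bound $C\sum_\ell 2^{-\ell skq}$, which is of order $s^{-1}$ again. This is too crude.

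Instead we exploit the compact support. For $|x|\ge 2R$ and large $|h|$, the shifted term is supported where $|h|\sim|x|/j$. For $x\in B(0,2R)$, $f(x+jh)\neq0$ forces $|h|\le 3R$, so that part has $L^q(dh)$ norm $\lesssim\|f\|_{L^q}$ times a bounded weight. We will handle this term by Minkowski's inequality and the ball averages, reducing to $\|f\|_X$ times a constant independent of $s$. After multiplying by $s^{1/q}$ this part then vanishes as $s\to0^+$. Thus
\begin{align*}
s^{\frac1q}J_s^{\frac1q}=\left(\frac{|\mathbb{S}^{n-1}|}{kq}\right)^{\frac1q}|f|+E_s,
\end{align*}
where $s^{-1/q}E_s$ is controlled in the relevant norm uniformly in $s$.

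For $I_s$ we use $f\in F^{\sigma,k}_{X,q}\cap F^{\sigma',k}_{Y,q}$. We expect this space to mean that the truncated $\sigma$-difference quantity $\mathfrak{D}^{\sigma/k,k}_q$-type function lies in $X$, respectively $Y$. For $s<\min(\sigma,\sigma')/k$ we have $|h|^{-n-skq}\le|h|^{-n-\sigma q}$ on $|h|<1$. Hence $I_s^{1/q}\le G_X$ and $I_s^{1/q}\le G_Y$ for fixed functions $G_X\in X$ and $G_Y\in Y$, and so $s^{1/q}I_s^{1/q}\to0$.

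It remains to pass to $X^{1-s}Y^s$ norms. For the upper bound we write each piece as $g\le g_X^{1-s}g_Y^{s}$ with $g_X=g_Y$ suitably chosen. By the Calder\'on--Lozanovski\u{\i} definition, $\|g\|_{X^{1-s}Y^s}\le\|g\|_X^{1-s}\|g\|_Y^s$. The pieces are $s^{1/q}|f|$-type terms, which lie in both $X$ and $Y$ since $f\in X\cap Y$ by the $F$-space assumptions, and $\|f\|_Y^s\to1$-type factors tend to $1$ in the limit. This yields the $\limsup\lesssim\|f\|_X$ bound.

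For the lower bound we use $\mathfrak{D}^{s,k}_q(f)\ge c_s|f|-s^{-1/q}(\text{error})$, with $c_s=(\frac{|\mathbb{S}^{n-1}|}{skq})^{1/q}$, together with the lattice property. This reduces the claim to $\liminf_{s\to0}\|f\|_{X^{1-s}Y^s}\ge\|f\|_X$. That inequality is the main obstacle: lower semicontinuity of Calder\'on--Lozanovski\u{\i} norms as the parameter tends to the endpoint. We plan to prove it by duality or by the factorization $|f|\le\lambda g^{1-s}h^s$ with $\|g\|_X,\|h\|_Y\le1$. Using that $f$ has compact support and lies in $Y$, we would show that the factorizations cannot beat $\|f\|_X$ in the limit, via the Fatou property of $X$ applied to $g_s$ along a subsequence. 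Boundedness of the ball averages on $X^{1/q}$ ensures the error terms tend to $0$ in $X$, which closes the argument.
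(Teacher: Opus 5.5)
\textbf{There is a genuine gap in how you treat the shifted terms} $f(x+jh)$, $j\ge1$, inside $J_s$, and it invalidates your upper bound.

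Your near-support estimate is correct. For $x\in B(\mathbf{0},2R)$ only $1\le|h|\le 3R$ contributes, so these terms are bounded by $C\|f\|_{L^q}\mathbf{1}_{B(\mathbf{0},2R)}$ uniformly in $s$.

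The problem is the region $|x|\ge 2R$, where you only note $|h|\sim|x|/j$ and never estimate anything. There the shifted term has size $\sim|x|^{-\frac nq-sk}\|f\|_{L^q}$. Already for $X=Y=L^q$, which satisfies all the hypotheses, its norm is $\sim(\int_{|x|\ge 2R}|x|^{-n-skq}\,dx)^{1/q}\sim s^{-1/q}$. That is the same order as the main term $c_s|f|$.

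So $E_s\not\to0$, and your decomposition $s^{1/q}J_s^{1/q}=(|\mathbb{S}^{n-1}|/(kq))^{1/q}|f|+E_s$ with negligible $E_s$ is false. If it held, it would give the exact limit $(|\mathbb{S}^{n-1}|/(kq))^{1/q}\|f\|_X$. But in $L^q$ with $k=1$ the Maz'ya--Shaposhnikova constant is $(2|\mathbb{S}^{n-1}|/q)^{1/q}$, and the extra factor comes precisely from this far region. This is why the theorem only asserts $\lesssim\|f\|_X$ from above.

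The repair is the dyadic-annulus estimate you discarded as ``too crude''; it is exactly what is needed.
\begin{itemize}
\item It gives $s^{1/q}\|J_s^{1/q}\|_X\lesssim\|f\|_X$, which is Lemma~\ref{lem-s0out}.
\item The same bound holds in $Y$, using the hypothesis on $Y^{1/q}$.
\item Then \eqref{eq-UP} yields $s^{1/q}\|J_s^{1/q}\|_{X^{1-s}Y^s}\lesssim\|f\|_X^{1-s}\|f\|_Y^s\to\|f\|_X$.
\end{itemize}
Your treatment of $I_s$ is fine.

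\textbf{Your lower bound survives}, because it only needs the near-support bound. Since $f=0$ off $B(\mathbf{0},R)$, you have $\mathfrak{D}^{s,k}_q(f)\ge(c_s|f|-C\|f\|_{L^q}\mathbf{1}_{B(\mathbf{0},R)})_+$, and the subtracted piece is killed by the factor $s^{1/q}$. The paper argues differently: for $|h|$ large the translates $f(\cdot+jh)$ have disjoint supports, so $|\Delta_h^kf|\ge|f|$ pointwise and no subtraction is needed.

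\textbf{What remains unproved is $\liminf_{s\to0^+}\|f\|_{X^{1-s}Y^s}\ge\|f\|_X$.} You correctly identify it as essential but do not prove it. The Fatou-along-a-subsequence idea is not an argument as it stands, since nothing yields a.e.\ convergence of the factors $g_s$.

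The duality route closes it:
\begin{itemize}
\item Suppose $|f|\le g^{1-s}h^s$ with $g\in X$ and $h\in Y$. For $w\in X'\cap Y'$, H\"older gives $\int|fw|\le(\|g\|_X\|w\|_{X'})^{1-s}(\|h\|_Y\|w\|_{Y'})^s$.
\item Taking the infimum over factorizations, $\int|fw|\le\|f\|_{X^{1-s}Y^s}\|w\|_{X'}^{1-s}\|w\|_{Y'}^{s}$.
\item Let $s\to0^+$. Then take the supremum over nonzero truncations $\min\{|w|,N\}\mathbf{1}_{B(\mathbf{0},\rho)}$ of $w\in X'$, which lie in $X'\cap Y'$.
\item Conclude with the Lorentz--Luxemburg identity $X''=X$.
\end{itemize}
This is Lemma~\ref{lem-equa} applied to the pair $(Y,X)$. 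The paper proves it via the Lozanovski\u{\i} duality theorem; the H\"older step above is the only part of that theorem you actually need.
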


\begin{remark}
\begin{enumerate}[{\rm(i)}]
\item In the case $X:=Y$, $k:=1$, and $q\in[1,\infty)$,
Theorem \ref{thmMS} improves
\cite[Theorem 2.16(i)]{PYYZ24} within the setting of ball Banach function
spaces. Indeed, the latter assumes that
$p,q\in(0,\infty)$ with $q\le p$, that $X^{\frac1p}$ is a
ball Banach  space, and that the Hardy--Littlewood maximal operator
$M$ is bounded on $(X^{\frac1p})'$. Since $\frac{p}{q}\ge1$,
$
X^{\frac1q}
=
(X^{\frac1p})^{\frac{p}{q}}$
is also a ball Banach function space. Moreover, by
\cite[Lemma 3.11]{DGPYYZ24},
we find that, when $q\in [1,\infty)$, the centered ball average operators are uniformly bounded on $X^{\frac{1}{p}}$. Since $q\le p$, this, together with H\"{o}lder's inequality and the lattice property, yields the corresponding uniform boundedness on $X^{\frac{1}{q}}$. Thus, the assumptions of
\cite[Theorem 2.16(i)]{PYYZ24} imply those of Theorem
\ref{thmMS}.

\item In the Lebesgue space, Maz'ya and Shaposhnikova
\cite{MS02} gave an exact limiting formula as $s\to0^+$ for fractional
Sobolev seminorms. Such an exact formula, however, cannot be extended to
general ball Banach function spaces. Indeed, Pan et al.
\cite[Remarks 2.13(ii) and 2.21 and Example 2.20]{PYYZ24} proved that,
for certain weighted Lebesgue spaces, no positive constant independent of the
function under consideration can make the corresponding Maz'ya--Shaposhnikova
type limit formula hold for all functions.
\end{enumerate}
\end{remark}

Compared with the corresponding BBM formula and MS estimate in a fixed
ball Banach function space, the present setting involves two additional
considerations. First, the target space $X^{1-s}Y^s$ itself varies
along with the smoothness index $s$. We therefore
need to identify its limiting space as $s\to1^-$ and $s\to0^+$;
see Lemma \ref{lem-equa}. Second, the extrapolation argument used
for the BBM upper estimate in \cite{DGPYYZ24,HLYY-ineq} is not suitable
for the present off-diagonal setting because it would not preserve the sharp
range of indices. Instead, we use the pointwise estimates in Theorem
\ref{thm-ps} and their norm consequences in Theorem \ref{thmGNS}.

Unlike Theorem \ref{thmGNS}, which is formulated so as to keep track of
the endpoint behavior as $s\to0^+$ or $s\to1^-$, the smoothness index
$s\in(0,1)$ is fixed in the next theorem. This allows the admissible parameter range
to depend on $s$ itself and leads to the sharp assumption
\begin{align*}
n\left(\frac{1-s}{p_X}+\frac{s}{p_Y}-\frac1q\right)<sk.
\end{align*}
Under this assumption, the Calder\'on--Lozanovski\u{\i} product
$X^{1-s}Y^s$ is not merely an admissible target space. It is the optimal
rearrangement invariant target space.
Hence, the following theorem gives a
characterization of the rearrangement invariant target spaces
for which the corresponding fractional Gagliardo--Nirenberg inequalities hold.

\begin{theorem}\label{thm-fs}
Let $k\in \mathbb{N}$, $s\in(0,1)$, and $q\in[1,\infty)$.
\begin{enumerate}[{\rm(i)}]
\item Let $X$ and $Y$ be ball Banach function spaces whose lower generalized
Boyd indices are respectively $p_X$ and $p_Y$ (see Definition \ref{def-Boyd}).
Assume that $p_X,p_Y\in(1,\infty)$ and
$n(\frac{1-s}{p_X}+\frac{s}{p_Y}-\frac1q)<sk$. Then, for any
$f\in X\cap\dot W^{k,Y}$,
\begin{align}\label{eq-XYss}
\left\|\mathfrak D_q^{s,k}(f)\right\|_{X^{1-s}Y^s}
\lesssim
\|f\|_X^{1-s}\left\|\nabla^k f\right\|_Y^s,
\end{align}
where the implicit positive constant is independent of $f$. The target space
is optimal in the following sense: If $X,Y$, and $B$ are rearrangement
invariant Banach function spaces, then the inequality
\begin{align}\label{eq-OPP-ri}
\left\|\mathfrak D_q^{s,k}(f)\right\|_B
\lesssim
\|f\|_X^{1-s}\left\|\nabla^k f\right\|_Y^s
\end{align}
holds for any $f\in X\cap\dot W^{k,Y}$ if and only if
$X^{1-s}Y^s\hookrightarrow B$.

\item Let $Y$ be a ball Banach function space whose lower generalized Boyd
index is $p_Y$ (see Definition \ref{def-Boyd}). Assume that
$p_Y\in(1,\infty)$ and $n(\frac{s}{p_Y}-\frac1q)<sk$. Then, for any
$f\in\operatorname{BMO}\cap\dot W^{k,Y}$,
\begin{align}\label{eq-BMO-fs}
\left\|\mathfrak D_q^{s,k}(f)\right\|_{Y^{\frac1s}}
\lesssim
\|f\|_{\operatorname{BMO}}^{1-s}
\left\|\nabla^k f\right\|_Y^s,
\end{align}
where the implicit positive constant is independent of $f$. The
target $Y^{\frac1s}$ is optimal. To be precise, if $Y$ and $B$ are rearrangement invariant
Banach function spaces, then the inequality
\begin{align}\label{eq-OPP-bmo}
\left\|\mathfrak D_q^{s,k}(f)\right\|_B
\lesssim
\|f\|_{\operatorname{BMO}}^{1-s}
\left\|\nabla^k f\right\|_Y^s
\end{align}
holds for any $f\in\operatorname{BMO}\cap\dot W^{k,Y}$ if and only if
$Y^{\frac1s}\hookrightarrow B$.
\end{enumerate}
\end{theorem}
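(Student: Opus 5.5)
The plan is to obtain the norm inequalities from the pointwise estimates of Theorem \ref{thm-ps} with $s$ fixed, and then prove optimality by a separated bump construction.

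\emph{Inequality \eqref{eq-XYss}.} Since $p_X,p_Y\in(1,\infty)$, we can choose $p_1\in[1,p_X)$ and $p_2\in[1,p_Y)$ close enough to $p_X$ and $p_Y$ that the strict condition $n(\frac{1-s}{p_1}+\frac{s}{p_2}-\frac1q)<sk$ still holds. This condition is open in $s$, so we can also pick $s_0<s<\widetilde{s_0}\le1$ for which both hypotheses of Theorem \ref{thm-ps}(i) hold. Theorem \ref{thm-ps}(i) then gives, pointwise,
\[
\mathfrak D^{s,k}_q(f)\lesssim [M^\sharp_{p_1}(f)]^{1-s}[M_{p_2}(|\nabla^k f|)]^s,
\]
with a constant depending on $s$, which is harmless since $s$ is fixed. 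Next we use the lattice property of $X^{1-s}Y^s$ and the Lozanovski\u{\i} estimate $\|g^{1-s}h^s\|_{X^{1-s}Y^s}\le\|g\|_X^{1-s}\|h\|_Y^s$. We bound $M^\sharp_{p_1}(f)\le 2M_{p_1}(f)$, and $M_{p_1}$, $M_{p_2}$ are bounded on $X$ and $Y$ respectively because $p_1<p_X$ and $p_2<p_Y$ (this is the definition of the lower Boyd indices). Together these give \eqref{eq-XYss}.

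\emph{Inequality \eqref{eq-BMO-fs}.} Here $p_1$ is arbitrary and only $M^\sharp_{p_1}(f)\lesssim\|f\|_{\rm BMO}$ is used, by John--Nirenberg. This reduces the claim to $\|[M_{p_2}|\nabla^kf|]^s\|_{Y^{1/s}}=\|M_{p_2}|\nabla^kf|\|_Y^s$. The condition $n(\frac{s}{p_Y}-\frac1q)<sk$ is the $p_1\to\infty$ form of the previous condition, so choosing $p_1$ large and $p_2<p_Y$ makes Theorem \ref{thm-ps}(i) applicable.

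\emph{Optimality.} The direction ``$X^{1-s}Y^s\hookrightarrow B$ implies \eqref{eq-OPP-ri}'' is immediate from \eqref{eq-XYss}. For the converse we assume \eqref{eq-OPP-ri} and prove that $\|g\|_B\lesssim\|g\|_{X^{1-s}Y^s}$. Because $B$ and $X^{1-s}Y^s$ are rearrangement invariant and have the Fatou property, it suffices to take simple $g\ge0$. We further reduce to a model in which $g$ is a finite sum $\sum_i a_i\mathbf 1_{E_i}$, and decompose it as $g\le g_0^{1-s}g_1^s$ with $\|g_0\|_X\|g_1\|_Y$-type control; by homogeneity we normalize $\|g_0\|_X=\|g_1\|_Y=1$. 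The key construction takes many pairwise far-separated cubes $Q_j$ of side $r_j$ and bumps $\phi_j(x)=\lambda_j\phi((x-c_j)/r_j)$, with amplitudes and scales chosen so that $\lambda_j\sim g_0$ and $\lambda_j r_j^{-k}\sim g_1$ on the corresponding piece. Then $f=\sum_j\phi_j$ satisfies $|f|\lesssim\sum_j\lambda_j\mathbf 1_{Q_j}$ and $|\nabla^k f|\lesssim\sum_j\lambda_jr_j^{-k}\mathbf 1_{Q_j}$. The lower bound $\mathfrak D^{s,k}_q(f)\gtrsim\lambda_j r_j^{-sk}=\lambda_j^{1-s}(\lambda_jr_j^{-k})^s$ holds on a fixed fraction of each $Q_j$, by integrating over $|h|\sim r_j$. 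Rearrangement invariance lets us rearrange the $Q_j$ freely, so the supports realize the distributions of $g_0$, $g_1$, and $g_0^{1-s}g_1^s$ up to constants. Separation ensures that the remaining bumps do not destroy the lower bound, since we only need a lower estimate and can restrict the $h$-integral to $|h|\lesssim r_j$.

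\emph{Main obstacle.} The delicate step is matching levels. Given a level set where $g_0\sim2^{a}$ and $g_1\sim2^{b}$, the set $E$ must be tiled by cubes of side $r=2^{(a-b)/k}$, and measures must be preserved up to constants in all three functions at once. I would first try dyadic level decompositions of $g_0$ and $g_1$ on a common partition, and approximate each piece by unions of disjoint congruent cubes, using the lattice property and the fact that boundedness of dilations on r.i.\ spaces absorbs the constant-factor measure losses. The BMO case runs the same way with $g_0\equiv1$, i.e.\ $\lambda_j=1$, using $\|f\|_{\rm BMO}\lesssim\sup_j\|\phi_j\|_\infty$ for separated bumps.
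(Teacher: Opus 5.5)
Your derivation of \eqref{eq-XYss} and \eqref{eq-BMO-fs} from Theorem \ref{thm-ps}(i) with $s$ fixed, and the easy halves of both optimality statements, are the paper's argument. For the converse you also follow the paper's separated-bump strategy (Theorem \ref{thmOP}(i), applied with $X=L^\infty$ for part (ii)). Your lower bound is actually cleaner than the paper's. You restrict the $h$-integral to an annulus $c\,r_j\le|h|\le 2c\,r_j$, with $c$ chosen so that $x+\ell h$ leaves the support of $\phi_j$ for every $\ell\ge1$. Then, for $x$ in a fixed inner part of $Q_j$ where $\phi\equiv1$, you get $|\Delta_h^kf(x)|=\lambda_j$ exactly, and no other bump is sampled once the separation exceeds a fixed multiple of $\max_j r_j$. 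The paper instead integrates over $|h|\ge2$ in rescaled variables and must then show that the interaction error $\mathcal E_{R,L}$ tends to $0$ as $L\to\infty$.

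The genuine gap is at the step you flag as the main obstacle, and your proposed fix does not close it. The levels of $g_0$ and $g_1$ on $E_j$ force the amplitude $\lambda_j=a_j$ and the side $r_j=(a_j/b_j)^{1/k}$. Nothing prevents $|E_j|\ll r_j^n$, since the normalization $\|g_0\|_X=\|g_1\|_Y=1$ gives no lower bound on $b_j$. In that case no cube of side $r_j$ fits into measure $|E_j|$, and you are stuck. Either you drop $E_j$, losing its contribution to $\|g_0^{1-s}g_1^s\|_B$ in the lower bound. Or you overshoot $|E_j|$ by the unbounded factor $r_j^n/|E_j|$ in the upper bounds for $\|f\|_X$ and $\|\nabla^kf\|_Y$. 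The dilation bound \eqref{eq-bX} absorbs only bounded factors, so it cannot help here.

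The missing idea is a free global scale. The target inequality $\|g_0^{1-s}g_1^s\|_B\lesssim\|g_0\|_X^{1-s}\|g_1\|_Y^s$ is homogeneous in $g_1$ alone, so you may run your construction for the pair $(g_0,R^{-k}g_1)$ with $R$ small. Concretely, use bumps of amplitude $a_j$ and side $Rr_j$, with $\lfloor |E_j|/(Rr_j)^n\rfloor$ of them on level $j$. Once $R$ is as small as in \eqref{eq-R}, the floor costs at most a factor $2$ in measure. The rearrangement comparisons of type \eqref{eq-dff} then hold with constants independent of $R$, giving $\|f\|_X\lesssim\|g_0\|_X$, $\|\nabla^kf\|_Y\lesssim R^{-k}\|g_1\|_Y$, and $\|\mathfrak D^{s,k}_q(f)\|_B\gtrsim R^{-sk}\|g_0^{1-s}g_1^s\|_B$. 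The factors $R^{-sk}=(R^{-k})^s$ cancel. This is exactly the role of the parameter $R$ in \eqref{eq-mj}, and your normalization $\|g_1\|_Y=1$ discards precisely this freedom. The same rescaling is needed in your BMO case, where the bumps should have amplitude $1$ and side $Rb_j^{-1/k}$.
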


\begin{remark}
\begin{enumerate}[{\rm (i)}]
\item When $p_1,p_2\in (1,\infty)$, $X=L^{p_1}$, $Y=L^{p_2}$,
$s\in (0,1)$, $p\in (1,\infty)$ satisfy
$\frac{1}{p}=\frac{1-s}{p_1}+\frac{s}{p_2}$, and $q=p$,
Theorem \ref{thm-fs}{\rm(i)} reduces to the classical fractional
Gagliardo--Nirenberg inequality on Lebesgue spaces; see, for instance,
\cite[Theorem 1.1]{BM18}. Beyond this classical Lebesgue space case, the
off-diagonal estimates in Theorem \ref{thm-fs}, together with the optimal
target assertions, appear to be new for many important function spaces
discussed in Section \ref{sec-app}.

\item
The two ranges of $q$ in Theorem \ref{thm-fs} are sharp in some sense. More
precisely, the estimate in Theorem \ref{thm-fs}{\rm(i)} may fail when
$n(
\frac{1-s}{p_X}+\frac{s}{p_Y}-\frac{1}{q}
)\ge sk$,
and the estimate in Theorem \ref{thm-fs}{\rm(ii)} may fail when
$n(
\frac{s}{p_Y}-\frac{1}{q}
)\ge sk;$
see Section \ref{sec-sharp}.

\item
Let $X=Y^{q_1}$ with $q_1\in [1,\infty)$, let $q\in [1,q_1]$, and let $s\in (0,1)$
satisfy $\frac{1}{q}=\frac{1-s}{q_1}+s$.
Then $X^{1-s}Y^s=Y^q$ and $p_X=q_1 p_Y$. Thus, $\frac{1-s}{p_X}+\frac{s}{p_Y}-\frac{1}{q}=\frac{1}{q}(\frac{1}{p_Y}-1)$. Combining this observation with
Theorem \ref{thm-fs} for $k=1$, we
conclude that, if $p_Y\in (1,\infty),$ then, for any
$f \in Y^{q_1}\cap \dot{W}^{1,Y}$,
\begin{align*}
\left\|
\mathfrak{D}^{s}_q (f)
\right\|_{Y^q}
\lesssim
\left\|f\right\|_{Y^{q_1}}^{1-s}
\left\|\nabla f\right\|_{Y}^{s}
\end{align*}
with the implicit positive constant independent of $f$.
This gives an answer to the question raised in
\cite[Remark 4.14(iii)]{DLYYZ23} in the non-endpoint regime.
\end{enumerate}
\end{remark}

The inequalities in Theorem \ref{thm-fs} are obtained by combining the
pointwise estimates established above with the boundedness of powered
maximal operators on suitable convexifications of $X$ and $Y$. For the
converse direction in the optimality statement,
as in the proof of \cite[Corrollary 2.4]{LRS23}, we first reduce
the optimality problem to a norm
estimate for simple functions; see \eqref{eq-uv} and \eqref{eq-gn1}.
Then, different from the proof of \cite[Corrollary 2.4]{LRS23}, we
introduce a new family of test
functions formed by sums of localized bumps whose supports are
sufficiently far apart; see \eqref{eq-fR}. This separation allows us to
estimate the required norms directly by rearrangement invariance and,
in particular, avoids the \emph{additional assumption} $X\subset Y$ imposed in
\cite[Corollary 2.4]{LRS23}. The same construction applies to the
integer order setting and yields Theorem \ref{thmOP}{\rm(ii)}, which
also removes the \emph{additional assumption} $X\subset Y$ and extends the optimality result
of \cite[Corollary 2.4]{LRS23} from $j=1$ and $k=2$ to arbitrary
integers $1\le j<k$. In particular, taking $j=1$ and $k=2$ and applying
the result to Lorentz spaces gives an answer to
\cite[Question 2.6]{LRS23}.

The remainder of this article is organized as follows. In Section
\ref{sec-poin}, we show the pointwise estimates in Theorem \ref{thm-ps}
and derive the norm inequalities in Theorem \ref{thmGNS}. Section
\ref{sec-asy} is devoted to proving Theorems \ref{thmBBM} and \ref{thmMS}. In Section \ref{secOP}, we show Theorem
\ref{thm-fs}, including the inequalities for each $s$ and the optimality of
the Calder\'on--Lozanovski\u{\i} target spaces.
In Section \ref{sec-sharp},
we prove that the assumptions involving $q$ in
Theorems \ref{thmGNS} and \ref{thm-fs} are sharp.
Finally, in Section \ref{sec-app}, we apply the abstract results to several
specific function spaces. Besides recovering known results in some
classical cases, we obtain several new off-diagonal inequalities, identify
optimal Lorentz target spaces, and show a sharpened BMO endpoint estimate
when the derivative space is an ordinary Lebesgue space.

We conclude this introduction with some notational conventions.
Throughout the article, let $\mathbb{N}:=\{1,2,\ldots\}$ and
$\mathbb{Z}_+ :=\mathbb{N}\cup\{0\}$.
For any $s\in \mathbb{R}$, the \emph{notation $\lfloor s \rfloor$} 
denotes the largest integer not exceeding $s$.
If $E$ is a subset of ${\mathbb{R}^n}$, we denote by
${\bf 1}_E$ its characteristic function.
We use ${\bf 0}$ to denote the origin of ${\mathbb{R}^n}$ and
$\mathbb{S}^{n-1}$ to denote the unit sphere in ${\mathbb{R}^n}$.
For any $x\in {\mathbb{R}^n}$ and $r\in (0,\infty)$, define
$B(x,r):=\{y\in {\mathbb{R}^n}:|x-y|<r\}$.
For any $\lambda\in (0,\infty)$ and any ball or cube $E$ in
$\mathbb{R}^n$, $\lambda E$ denotes the ball or cube with the same center
as $E$ and $\lambda$ times its radius or edge length.
For any measurable function $f$ on $\mathbb{R}^n$, its support is denoted
by $\operatorname{supp}(f)$.
We use $C_{\rm c}^{\infty}$ to denote the space of all infinitely
differentiable functions on $\mathbb{R}^n$ with compact support.
For any $k\in \mathbb{N}$ and $p\in [1,\infty)$, we use
$W^{k,p}_{\rm loc}$ to denote the set of all $f\in L^p_{\rm loc}$ such
that, for any bounded open set $U\subset \mathbb{R}^n$, $f\in W^{k,p}(U)$.
For any $k\in\mathbb{N}$ and any Banach space $X$ of measurable functions
on $\mathbb{R}^n$, if $f\in L^1_{\rm loc}$ is such that $\nabla^k f$
exists, we write $\|\nabla^k f\|_X:=\|\,|\nabla^k f|\,\|_X$.
For any measurable set $E\subset{\mathbb{R}^n}$ with
$|E|\in(0,\infty)$, define
$$\fint_E f(x)\,dx:=\frac{1}{|E|}\int_E f(x)\,dx.$$
In addition, $C$ denotes a positive constant that is independent of the
main parameters involved, but may vary from line to line, whereas
$C_{\alpha,\dots}$ denotes a positive constant depending on the indicated
parameters $\alpha,\dots$.
The notation $f\lesssim g$ means $f\leq Cg$ and, if
$f\lesssim g\lesssim f$, then we write $f\sim g$.
If $f\leq Cg$ and either $g=h$ or $g\leq h$, we write
$f\lesssim g=h$ or $f\lesssim g\leq h$, respectively.
Moreover, the notation $s\to 0^+$ (resp. $s\to 1^-$)
means $s\in (0,1)$ and $s\to 0$ (resp. $s\to 1$).
Finally, throughout the proofs, we retain the notation introduced in the
corresponding theorem or related statement.

\section{Proofs of Theorems \ref{thm-ps} and \ref{thmGNS}}
\label{sec-poin}

We first prove Theorem \ref{thm-ps}.
To this end, we split the domain of integration in the definition of
$\mathfrak{D}^{s,k}_q$ into two parts and estimate them separately as follows.
These estimates are also of independent interest.

\begin{proposition}\label{proPoin}
Let $k\in \mathbb{N}$, $p_1,p_2,q\in[1,\infty)$, and $f\in W^{k,1}_{\rm loc}$.
\begin{enumerate}
\item[{\rm(i)}] If $s_0\in(0,1)$ satisfies
$n(\frac{1-s_0}{p_1}+\frac{s_0}{p_2}-\frac{1}{q})< s_0 k$,
then, for any $r\in (0,\infty)$ and $s\in (s_0,1)$ and for almost every $x\in \mathbb{R}^n$,
\begin{align}\label{eq-Pout}
\left[\int_{|h|\ge r}\frac{|\Delta_h^k f(x)|^q}{|h|^{n+skq}}\,dh\right]^{\frac{1}{q}}
&\lesssim
\frac{r^{k(s_0-s)}}{(s-s_0)^{\frac{1}{q}}}
\left[M^{\sharp}_{p_1}(f)(x)\right]^{1-s_0}\left[M_{p_2}\left(\left|\nabla^k f\right|\right)(x)\right]^{s_0},
\end{align}
where
the implicit positive constant depends only on $n$, $k$, $p_1$, $p_2$, $q$, and $s_0$.

\item[{\rm(ii)}] If $\widetilde{s_0}\in(0,1]$ satisfies
$n(\frac{1-\widetilde{s_0}}{p_1}+\frac{\widetilde{s_0}}{p_2}-\frac{1}{q})< \widetilde{s_0} k$,
then, for any $r\in (0,\infty)$ and $s\in (0,\widetilde{s_0})$ and for almost every $x\in \mathbb{R}^n$,
\begin{align*}
\left[\int_{|h|\le  r}\frac{|\Delta_h^k f(x)|^q}{|h|^{n+s kq}}\,dh\right]^{\frac{1}{q}}
&\lesssim
\frac{r^{k(\widetilde{s_0}-s)}}{(\widetilde{s_0}-s)^{\frac{1}{q}}}
\left[M^{\sharp}_{p_1}(f)(x)\right]^{1-\widetilde{s_0}}\left[M_{p_2}\left(\left|\nabla^k f\right|\right)(x)\right]^{\widetilde{s_0}},
\end{align*}
where the implicit positive constant depends only on $n$, $k$, $p_1$, $p_2$, $q$, and $\widetilde{s_0}$.

\item[{\rm(iii)}] If $p\in [q,\infty)$, then, for any $r\in (0,\infty)$ and $s\in (0,1)$ and for almost every $x\in \mathbb{R}^n$,
\begin{align}\label{eq-Pout2}
\left[\int_{|h|\ge r}\frac{|\Delta_h^k f(x)|^q}{|h|^{n+skq}}\,dh\right]^{\frac{1}{q}}
\lesssim\frac{r^{-sk}}{s^{\frac{1}{q}}}
M_{p}(f-f(x))(x),
\end{align}
where
the implicit positive constant depends only on $n$, $k$, $p$, and $q$.
\end{enumerate}
\end{proposition}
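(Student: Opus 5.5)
Part (iii) is the simplest, so I start there. Since $p\ge q$, I work with $g:=f-f(x)$, where $x$ is a Lebesgue point of $f$. The binomial coefficients of $\Delta_h^k$ sum to zero, so $\Delta^k_h f(x)=\Delta^k_h g(x)=\sum_{j=1}^k(-1)^{k-j}\binom kj g(x+jh)$. Hence it suffices to bound, for each $j$,
\[
\int_{|h|\ge r}|g(x+jh)|^q|h|^{-n-skq}\,dh .
\]
I decompose $\{|h|\ge r\}$ into dyadic annuli $2^m r\le |h|<2^{m+1}r$, $m\ge0$. On each annulus I use H\"older with exponent $p/q\ge1$, which gives
\[
\int_{\rm annulus}|g(x+jh)|^q\,dh\lesssim (2^mr)^n\left[M_p(g)(x)\right]^q .
\]
Multiplying by $(2^mr)^{-n-skq}$ and summing the geometric series $\sum_m 2^{-mskq}\sim (skq)^{-1}$ yields $r^{-skq}s^{-1}[M_p(g)(x)]^q$. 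Taking the $q$-th root gives \eqref{eq-Pout2}.

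For (i) and (ii) I follow the strategy announced in the introduction. First, Lemma~\ref{lem-point}, the telescopic estimate via shifted dyadic grids, controls $|\Delta^k_h f(x)|$ for $|h|\sim 2^{-j}$ by averages of $f$ minus polynomials over dyadic cubes of side about $2^{-j}$ and their ancestors. Second, I apply the higher-order Poincar\'e inequality together with the Sobolev-type gain $L^{p_2}\to L^{q}$ on cubes. This gain is available precisely under a condition of the form $n(\frac1{p_2}-\frac1q)<k$, interpolated with the $L^{p_1}$ oscillation. Each annulus then contributes one of two bounds:
\begin{enumerate}[{\rm (a)}]
\item $\lesssim\ell^{-n/q+n/p_1}\,\ell^{-sk}M^\sharp_{p_1}(f)(x)$, from the oscillation term;
\item $\lesssim\ell^{k-sk}\,\ell^{-n/q+n/p_2}M_{p_2}(|\nabla^kf|)(x)$, from the derivative term,
\end{enumerate}
both after accounting for the normalization in $h$, where $\ell$ denotes the scale. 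I take the geometric mean of (a) and (b) with weights $1-s_0$ and $s_0$. The hypothesis $n(\frac{1-s_0}{p_1}+\frac{s_0}{p_2}-\frac1q)<s_0k$ ensures that, per annulus at scale $\ell\ge r$, the $q$-th power contribution is $\ell^{kq(s_0-s)}$ times the interpolated maximal quantity. Summing over $\ell=2^mr$ with $s>s_0$ gives $r^{kq(s_0-s)}\sum_m2^{-mkq(s-s_0)}\sim r^{kq(s_0-s)}(s-s_0)^{-1}$, which is \eqref{eq-Pout}. Part (ii) is symmetric: I sum over $\ell=2^{-m}r$ with weight $\widetilde{s_0}$, and the series converges because $s<\widetilde{s_0}$, producing $(\widetilde{s_0}-s)^{-1}$.

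The main obstacle, flagged in the introduction, is that the cubes appearing in the telescopic estimate need not contain $x$. Their averages are therefore not directly dominated by $M^\sharp_{p_1}(f)(x)$ or $M_{p_2}(|\nabla^kf|)(x)$. To handle this I would choose, among the $3^n$ shifted dyadic grids, one in which a cube of comparable side contains both $x$ and the relevant cube. This is the one-third trick, and it yields the domination \eqref{eq-dom}. The covering and volume estimates \eqref{eq-Qvol} and \eqref{eq-tt} then ensure that replacing a small cube by a comparable enlarged cube costs only a dimensional constant. A further delicate point is to convert the oscillation "$\inf_c$" into a legitimate use of $M^\sharp_{p_1}$. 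Since the difference annihilates constants, I would subtract the best constant on the enclosing cube and telescope. This must be done so that the constants stay independent of $s$ and depend only on $s_0$ or $\widetilde{s_0}$.
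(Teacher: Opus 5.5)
Your part (iii) is correct and is the paper's argument. Parts (i) and (ii), however, have a genuine gap. You place the hypothesis $n(\frac{1-s_0}{p_1}+\frac{s_0}{p_2}-\frac1q)<s_0k$ in a per-annulus geometric-mean step, and that scheme cannot work.

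\emph{Scaling.} Bounds (a) and (b) are not scale invariant: the contribution of $\{|h|\sim\ell\}$ must scale like $\ell^{-sk}$ times a quantity with the units of $f$. After your geometric mean, the leftover factor $\ell^{n(\frac{1-s_0}{p_1}+\frac{s_0}{p_2}-\frac1q)}$ is not removed by a strict inequality on its exponent.

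\emph{Neither bound holds separately.} A geometric mean of two bounds needs both to hold, and under the hypothesis neither does.
A bound of type (b), by $M_{p_2}(|\nabla^kf|)(x)$ alone, is a Sobolev--Poincar\'e estimate and needs $n(\frac1{p_2}-\frac1q)\le k$. This is not implied: $n=3$, $k=1$, $p_1=4$, $p_2=1$, $s_0=\frac12$, $q=2$ satisfies the hypothesis, yet $W^{1,1}\not\hookrightarrow L^2$ locally in $\mathbb R^3$.
A bound of type (a), by $M^\sharp_{p_1}(f)(x)$ alone, fails because of the $i=0$ term. Take $g(y)=\log(1+|y|^2)$ from Proposition \ref{pro-counter} and $x\in B({\bf 0},1)$. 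Then $|\Delta_h^kg(x)|\gtrsim\log|h|$ for large $|h|$, so the contribution of $\{|h|\sim\ell\}$ is $\gtrsim\ell^{-sk}\log\ell$, while $M^\sharp_{p_1}(g)\lesssim1$.

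The interpolation has to be done cube by cube inside the telescoping sum, via $E_{k-1}(f,Q)\lesssim[\fint_Q|f-f_Q|]^{1-s_0}[l(Q)^k\fint_Q|\nabla^kf|]^{s_0}$. This produces the decay $2^{-js_0k}$ in the telescoping index $j$, which is what makes the telescoping down to the point converge.

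\emph{The mismatch.} It is not resolved at the cost of a dimensional constant. In Lemma \ref{lem-point} the cubes have side $\sim2^{-j}|h|$, $j\in\mathbb Z_+$; the telescoping runs down towards the points $x+ih$, so these are not ancestors. For $i\in\{1,\dots,k\}$, the point $x$ is only known to lie in $2^jQ$. The dominating cube $J\in\operatorname{Dom}(2^jQ)$ has side $2^{j+1}l(Q)$, so passing from averages over $Q$ to averages over $J$ costs $2^{jn}$ per average. Done naively, this forces the stronger condition $n(\frac{1-s_0}{p_1}+\frac{s_0}{p_2})<s_0k$.

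The paper recovers the loss in three steps:
\begin{enumerate}
\item It groups the $\sim2^{jn}$ cubes $Q$ sharing the same $J$ and applies the discrete H\"older inequality of Lemma \ref{lem-discrete} with $\theta=q(\frac{1-s_0}{p_1}+\frac{s_0}{p_2})$.
\item Combined with the factor $|Q|/|2^jQ|=2^{-jn}$ from the $h$-integration, the net loss is only $2^{jn(\theta-1)_+}$.
\item The series $\sum_j2^{jq(\epsilon-s_0k)+jn(\theta-1)_+}$ then converges precisely when $\epsilon<s_0k-n(\frac{1-s_0}{p_1}+\frac{s_0}{p_2}-\frac1q)_+$. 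This threshold is positive exactly under the hypothesis.
\end{enumerate}
The factor $(s-s_0)^{-1}r^{kq(s_0-s)}$ comes from a separate geometric sum over the side lengths of the cubes $J\ni x$.

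Your sketch merges these two sums, the scale ratio $2^j$ and the annulus scale $\ell$. That is why the hypothesis appears to you to absorb a power of $\ell$. Without the counting and discrete H\"older step, (i) and (ii) are not proved.
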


With Proposition \ref{proPoin} in hand,
Theorem \ref{thm-ps} follows easily as shown below.
After that, we will provide the detailed proof of
Proposition \ref{proPoin}.

\begin{proof}[Proof of Theorem \ref{thm-ps}]
Let $f\in W^{k,1}_{\rm loc}$.
We first show (i). By both (i) and (ii) of Proposition \ref{proPoin}, we find that, for any $r\in (0,\infty)$ and $s\in (s_0, \widetilde{s_0})$ and for almost every $x\in \mathbb{R}^n$,
\begin{align*}
\mathfrak{D}^{s,k}_q (f)(x)
&\le
\left[\int_{|h|\ge r}\frac{|\Delta_h^k f(x)|^q}{|h|^{n+skq}}\,dh\right]^{\frac{1}{q}}
+\left[\int_{|h|\le  r}\frac{|\Delta_h^k f(x)|^q}{|h|^{n+s kq}}\,dh\right]^{\frac{1}{q}} \\
& \lesssim
\frac{r^{k(s_0-s)}}{(s-s_0)^{\frac{1}{q}}}
\left[M^{\sharp}_{p_1}(f)(x)\right]^{1-s_0}\left[M_{p_2}\left(\left|\nabla^k f\right|\right)(x)\right]^{s_0}\\
&\quad +
\frac{r^{k(\widetilde{s_0}-s)}}{(\widetilde{s_0}-s)^{\frac{1}{q}}}
\left[M^{\sharp}_{p_1}(f)(x)\right]^{1-\widetilde{s_0}}\left[M_{p_2}\left(\left|\nabla^k f\right|\right)(x)\right]^{\widetilde{s_0}}.
\end{align*}
If one of the two maximal functions vanishes, then the desired estimate
obviously holds. Otherwise, choose $r$ such that the two terms on the right-hand side are equal, i.\,e.,
\begin{align*}
r
=
\left(
\frac{M_{p_1}^{\sharp}(f)(x)}
{M_{p_2}(|\nabla^k f|)(x)}
\right)^{1/k}
\left(
\frac{\widetilde{s}_0-s}{s-s_0}
\right)^{
\frac{1}
{kq(\widetilde{s}_0-s_0)}}.
\end{align*}
Then we have
\begin{align}\label{eq-full-pointwise}
\mathfrak{D}^{s,k}_q (f)(x)
\lesssim
(s-s_0)^{
-\frac{\widetilde{s}_0-s}
{q(\widetilde{s}_0-s_0)}
}
(\widetilde{s}_0-s)^{
-\frac{s-s_0}
{q(\widetilde{s}_0-s_0)}
}
\left[M^{\sharp}_{p_1}(f)(x)\right]^{1-s}\left[M_{p_2}\left(\left|\nabla^k f\right|\right)(x)\right]^{s}.
\end{align}
Note that, for any $s\in (s_0, \widetilde{s_0})$,
\begin{align*}
(s-s_0)^{
-\frac{\widetilde{s}_0-s}
{q(\widetilde{s}_0-s_0)}
}
(\widetilde{s}_0-s)^{
-\frac{s-s_0}
{q(\widetilde{s}_0-s_0)}
}
\lesssim \frac{1}{[(s-s_0)(\widetilde{s_0}-s)]^{\frac{1}{q}}},
\end{align*}
where the implicit positive constant depends only on $q$, $s_0$, and $\widetilde{s_0}$.
From this and \eqref{eq-full-pointwise}, we deduce \eqref{eq-tss} and hence (i).

Next, we prove (ii).
Applying both (ii) with $\widetilde{s_0}:=1$ and (iii) with $p:=p_1$ of Proposition \ref{proPoin},
we conclude that,
for any $r\in (0,\infty)$ and $s\in (0, 1)$ and for almost every $x\in \mathbb{R}^n$,
\begin{align*}
\mathfrak{D}^{s,k}_q (f)(x)
& \lesssim
\frac{r^{-sk}}{s^{\frac{1}{q}}}
M_{p_1}(f-f(x))(x)
+
\frac{r^{k(1-s)}}{(1-s)^{\frac{1}{q}}}
M_{p_2}\left(\left|\nabla^k f\right|\right)(x).
\end{align*}
Similar to the proof of (i), choosing $r$ such that the two terms on the
right-hand side are equal, we obtain
\eqref{eq-tss2}.
This
completes the proof of (ii) and hence Theorem \ref{thm-ps}.
\end{proof}

Now, we turn to show Proposition \ref{proPoin}. We need some lemmas.
For any $s\in \mathbb{Z}_+$,
let $\mathcal{P}_{s} $ denote the set of
all polynomials of degree not greater than $s$ on
$\mathbb{R}^n$.
For any
$s\in \mathbb{Z}_+$, any ball or cube $\Omega$ in $\mathbb{R}^n$,
and any $f\in L^{1}_{\rm loc} $,
let
$P^{(s)}_{\Omega}(f)$ denote the \emph{minimizing polynomial} in
$\mathcal{P}_{s} $ on $\Omega$ such that, for any
$\alpha\in\mathbb{Z}_+^n$ with $|\alpha|\leq s$,
\begin{align*}
\int_{\Omega}\left[f(x)-P^{(s)}_{\Omega}(f)(x)\right]x^{\alpha}\,dx=0.
\end{align*}
Then, from the definition,
we can easily deduce that, for any $s\in\mathbb{Z}_+$ and any
ball or cube $\Omega$ in $\mathbb{R}^n$,
the minimizing polynomial is unique and has the following
\emph{minimizing property} in terms of norms: for any $f\in L^{1}_{\rm loc}$,
\begin{align*}
E_{s}(f,\Omega):= \inf_{P\in \mathcal{P}_{s}}
\fint_{\Omega}|f(x)-P (x)|\,dx
\sim
\fint_{\Omega}\left|f(x)-P^{(s)}_\Omega(f)(x)\right|\,dx,
\end{align*}
where the positive equivalence constants depend only on $n$ and $s$
(see also \cite[p.\,83]{TG80} and \cite[Lemma 4.1]{Lu95}).
Combining this and \cite[Lemma 4.7]{HLYY-ineq},
we obtain the following variant of \cite[Lemma 4.7]{HLYY-ineq}, which will be used in the
proof of Theorem \ref{thm-ps}.

\begin{lemma}\label{lem:Poinca}
Let $k\in \mathbb{N}$ and $f\in L^1_{\rm loc} $.
Then there exists a positive constant $C_{n,k}$, depending only on
$n$ and $k$, such that,
for almost every $x\in\mathbb{R}^n$ and for any $r\in(0,\infty)$
and any ball $B_1\subset B:=B(x,r)\subset 3B_1$,
\begin{align*}
\left|f(x)-P^{(k-1)}_{B_1}(f)(x)\right|
\le C_{n,k}
\sum_{j\in\mathbb{Z}_+}E_{k-1}\left(f,B\left(x,2^{-j}r\right)\right).
\end{align*}
\end{lemma}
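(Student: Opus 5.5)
The plan is to derive Lemma \ref{lem:Poinca} from \cite[Lemma 4.7]{HLYY-ineq} combined with the minimizing property of $P^{(k-1)}_\Omega(f)$, via a standard telescoping argument over dyadically shrinking balls centered at $x$. Fix a Lebesgue point $x$ of $f$, at which the averages $\fint_{B(x,2^{-j}r)}|f-f(x)|$ tend to $0$; almost every $x$ qualifies. Set $B_j:=B(x,2^{-j}r)$ and $P_j:=P^{(k-1)}_{B_j}(f)$.

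\textbf{Step 1: $P_j(x)\to f(x)$.} By the minimizing property,
\begin{align*}
\fint_{B_j}|f-P_j|\lesssim E_{k-1}(f,B_j)\le \fint_{B_j}|f-f(x)|\to0.
\end{align*}
Since polynomials of degree at most $k-1$ satisfy $\|P\|_{L^\infty(B)}\le C_{n,k}\fint_B|P|$ (norm equivalence on a finite-dimensional space, scaled), we get
\begin{align*}
|P_j(x)-f(x)|\le \|P_j-f(x)\|_{L^\infty(B_j)}\lesssim\fint_{B_j}|P_j-f(x)|\lesssim \fint_{B_j}|f-f(x)|\to0.
\end{align*}

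\textbf{Step 2: telescoping.} Write
\begin{align*}
f(x)-P^{(k-1)}_{B_1}(f)(x)=\bigl[P_0(x)-P^{(k-1)}_{B_1}(f)(x)\bigr]+\sum_{j\ge0}\bigl[P_{j+1}(x)-P_j(x)\bigr].
\end{align*}
The balls $B_{j+1}\subset B_j$ have comparable measures, so
\begin{align*}
|P_{j+1}(x)-P_j(x)|\lesssim\fint_{B_{j+1}}|P_{j+1}-P_j|\lesssim \fint_{B_{j+1}}|f-P_{j+1}|+2^n\fint_{B_j}|f-P_j|,
\end{align*}
and the right-hand side is bounded by a constant multiple of $E_{k-1}(f,B_{j+1})+E_{k-1}(f,B_j)$.

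\textbf{Step 3: the term $P_0-P^{(k-1)}_{B_1}(f)$.} The hypothesis $B_1\subset B\subset 3B_1$ gives $|B|\le 3^n|B_1|$ and $B_1\subset B$. Hence
\begin{align*}
\|P_0-P^{(k-1)}_{B_1}(f)\|_{L^\infty(B)}\lesssim\fint_{B_1}|P_0-P^{(k-1)}_{B_1}(f)|\lesssim E_{k-1}(f,B)+E_{k-1}(f,B_1)\lesssim E_{k-1}(f,B).
\end{align*}
The middle step uses the polynomial norm equivalence transferred from $B_1$ to the enlarged ball $3B_1\supset B$, with constants depending only on $n$ and $k$. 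The last step uses $E_{k-1}(f,B_1)\le 3^nE_{k-1}(f,B)$: taking $P$ nearly optimal on $B$ and restricting to $B_1$ costs at most the measure ratio. Since $x\in B$, this bounds $|P_0(x)-P^{(k-1)}_{B_1}(f)(x)|$.

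Summing Steps 2 and 3 gives the claimed bound with $C_{n,k}$ depending only on $n,k$. The only delicate point is the uniform polynomial comparison between $B_1$ and $B$ in Step 3, including evaluation at $x$, which may lie outside $B_1$. It is handled by the containment $B\subset 3B_1$, so everything reduces to the scale-invariant inequality $\sup_{3B_1}|P|\le C\fint_{B_1}|P|$ for $P\in\mathcal P_{k-1}$. This is exactly the content of \cite[Lemma 4.7]{HLYY-ineq}, which may be cited directly, with the minimizing property turning its $\inf$-quantities into $E_{k-1}$.
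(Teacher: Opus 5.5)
Your proposal is correct and complete, but it takes a different route from the paper, which gives no argument of its own. The paper simply combines the minimizing property $\fint_\Omega|f-P^{(k-1)}_\Omega(f)|\sim E_{k-1}(f,\Omega)$ with \cite[Lemma 4.7]{HLYY-ineq}, and presents the statement as a variant of that lemma.

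You instead prove the estimate from scratch:
\begin{enumerate}[(a)]
\item At Lebesgue points of $f$ you show $P^{(k-1)}_{B(x,2^{-j}r)}(f)(x)\to f(x)$.
\item You compare consecutive dyadic balls via the scale-invariant bound $\sup_B|P|\lesssim\fint_B|P|$ on $\mathcal P_{k-1}$.
\item You attach the given ball through $\sup_{3B_1}|P|\lesssim\fint_{B_1}|P|$ together with $E_{k-1}(f,B_1)\le 3^nE_{k-1}(f,B)$.
\end{enumerate}
What your route buys is a self-contained proof that shows where $B_1\subset B\subset 3B_1$ enters: it is needed to evaluate at $x$, which may lie outside $B_1$. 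It also shows that the exceptional set consists only of non-Lebesgue points, so it is independent of $r$ and $B_1$, as the quantifiers in the statement require. The paper's citation is shorter but hides both points.

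Two small corrections. First, your notation $B_j:=B(x,2^{-j}r)$ clashes with the given ball $B_1$ of the statement, so $B_1$ means two different things in Steps 2 and 3; rename one of them. Second, your closing sentence attributes the polynomial comparison $\sup_{3B_1}|P|\le C\fint_{B_1}|P|$ to \cite[Lemma 4.7]{HLYY-ineq}. Since the paper calls the present lemma a variant of that result, the cited lemma is the pointwise telescoping estimate itself, not the polynomial inequality. The polynomial inequality is just finite-dimensional norm equivalence plus affine rescaling, which you already invoke in Step 1, so nothing in your proof depends on that citation.
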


On the other hand,
for any $\alpha\in\{0,\frac{1}{3},\frac{2}{3}\}^n$,
recall that the \emph{shifted
dyadic grid} $\mathcal{D}^\alpha$ is defined by setting
\begin{align*}
\mathcal{D}^\alpha:=\left\{2^j\left[m+[0,1)^n+(-1)^j\alpha\right]:
j\in\mathbb{Z},\ m\in\mathbb{Z}^n\right\}.
\end{align*}
Let $\mathbb{D}:=\bigcup_{\alpha\in\{0,\frac13,\frac23\}^n}\mathcal{D}^\alpha$.
The following properties
of shifted dyadic grids are also needed
(see, for instance, \cite[p.\,479]{MTT02}).

\begin{lemma}\label{lemCub}
\begin{enumerate}[{\rm(i)}]
\item
For any $Q,P\in\mathcal{D}^\alpha$ with
$\alpha\in\{0,\frac{1}{3},\frac{2}{3}\}^n$,
$Q\cap P\in\{\emptyset,Q,P\}$.
\item
For any cube $Q$ in $\mathbb{R}^n$, there exists
$P\in\mathbb{D}$ such that $Q\subset P$ and $l(P)\in(\frac32l(Q),3l(Q)] $.
\end{enumerate}
\end{lemma}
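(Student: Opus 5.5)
The statement splits into a nesting property (i) and a covering property (ii). Both reduce to one-dimensional facts about the endpoint sets of the shifted grids. The reason is that $\mathcal{D}^\alpha$ is a product of one-dimensional grids: for $\alpha=(a_1,\dots,a_n)$, each cube of generation $j$ is a product of intervals
$$I^{(a_i)}_{j,m_i}:=2^j\left[m_i+(-1)^ja_i+[0,1)\right],\qquad m_i\in\mathbb{Z}.$$
Hence it suffices to understand, for a fixed $a\in\{0,\frac13,\frac23\}$, the set of left endpoints
$$E_j^{(a)}:=\left\{2^j\left(m+(-1)^ja\right):m\in\mathbb{Z}\right\}.$$

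For (i), I will show $E_{j+1}^{(a)}\subset E_j^{(a)}$. A point of $E_{j+1}^{(a)}$ has the form $2^{j+1}(m'-(-1)^ja)=2^j(2m'-2(-1)^ja)$. Its difference from $2^j(-1)^ja$ is $2^j(2m'-3(-1)^ja)$, and $2m'-3(-1)^ja\in\mathbb{Z}$ because $3a\in\mathbb{Z}$. So this point lies in $E_j^{(a)}$.

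Consequently every generation-$j$ interval, which has length $2^j$ and endpoints in $E_j^{(a)}$, has no point of $E_{j+1}^{(a)}$ in its interior. It is therefore contained in a single generation-$(j+1)$ interval. Iterating, each generation-$j$ interval lies inside a unique interval of every coarser generation. Intervals of the same generation are equal or disjoint. Taking products coordinatewise (with the same $j$ in every coordinate), any two cubes of $\mathcal{D}^\alpha$ are either disjoint or nested, which gives $Q\cap P\in\{\emptyset,Q,P\}$.

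For (ii), given a cube $Q=\prod_{i=1}^n J_i$ with side length $\ell:=l(Q)$, I will pick the unique $j\in\mathbb{Z}$ with $2^j\in(\frac32\ell,3\ell]$, so that $\ell<\frac23 2^j$. The key observation is that, as $a$ ranges over $\{0,\frac13,\frac23\}$, the three sets $E_j^{(a)}$ are pairwise disjoint. Their union is the arithmetic progression $\frac{2^j}{3}\mathbb{Z}$, and consecutive points of the progression belong to different $E_j^{(a)}$.

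Fix a coordinate $i$ and let $\overline{J_i}=[c,c+\ell]$. A closed interval of length $\ell<2\cdot\frac{2^j}{3}$ contains at most two points of a progression with step $\frac{2^j}{3}$. So at least one shift $a_i$ has $E_j^{(a_i)}\cap[c,c+\ell]=\emptyset$. Then $\overline{J_i}$ meets no endpoint of the generation-$j$ intervals of that shift, so $J_i$ lies in a single interval of length $2^j$ of that grid.

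Choosing such an $a_i$ independently for each coordinate is allowed, because $\alpha$ ranges over the full product $\{0,\frac13,\frac23\}^n$. With $\alpha:=(a_1,\dots,a_n)$, this produces $P\in\mathcal{D}^\alpha\subset\mathbb{D}$ with $Q\subset P$ and $l(P)=2^j\in(\frac32 l(Q),3l(Q)]$. Using the closed interval also handles cubes that are open, closed, or half-open.

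The only delicate point is the arithmetic in (i): the factor $(-1)^j$ in the definition is exactly what makes the generations nest. Without it, the computation would produce $2m'-a$ rather than an integer offset. Verifying $3a\in\mathbb{Z}$ in that step is the crux; the rest is bookkeeping.
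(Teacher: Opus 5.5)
Your proof is correct and complete. The paper gives no argument of its own for this lemma (it simply cites \cite[p.\,479]{MTT02}), and what you wrote is the standard ``one-third trick'' proof behind that citation: the nesting $E^{(a)}_{j+1}\subset E^{(a)}_j$ comes from $3a\in\mathbb{Z}$, and the covering comes from the three endpoint sets splitting $\frac{2^j}{3}\mathbb{Z}$ into residue classes, which yields exactly the stated range $l(P)\in(\frac32 l(Q),3l(Q)]$. There is one cosmetic slip in your closing remark: without the factor $(-1)^j$ the offset would be $2m'+a$ rather than $2m'-a$, which changes nothing in the argument.
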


Based on this and Lemma \ref{lem:Poinca},
we obtain the following telescope type estimate
of difference operators,
which plays a key role in the proof of Proposition \ref{proPoin}.

\begin{lemma}\label{lem-point}
Let $k\in \mathbb{N}$, $q\in [1,\infty)$, and $\epsilon\in (0,1)$.
Then there exist positive constants $c_0$ and $c_1$, depending only on
$n$ and $k$, and a positive constant $C_{n,k,q}$, depending only on
$n$, $k$, and $q$, such that, for any $f\in W^{k,1}_{\rm loc}$,
$h\in\mathbb{R}^n\setminus\{{\bf0}\}$, and for almost every
$x\in \mathbb{R}^n$,
\begin{align}\label{lem-point-e1}
|\Delta_h^kf(x)|^q
\le
\frac{C_{n,k,q}}{\epsilon^q}
\sum_{i=0}^{k}\sum_{j\in\mathbb{Z}_+}2^{j\epsilon q}
\sum_{\genfrac{}{}{0pt}{}{Q\in\mathbb{D}}{2^jl(Q)
\in[c_0|h|,c_1|h|]}}
[E_{k-1}(f,Q)]^q F_i(x,h),
\end{align}
where, for each fixed $j\in\mathbb{Z}_+$ and $Q\in\mathbb{D}$ in the
last summation,
\begin{align*}
F_i(x,h):=\begin{cases}
{\bf 1}_{E_0}(x,x+h) & \text{if } i=0,\\
{\bf 1}_{E_i}(x,x+ih) & \text{if } i\in \{1,\dots,k\}
\end{cases}
\end{align*}
with
\begin{align*}
E_i:=\begin{cases}
Q\times 2^jQ & \text{if } i=0,\\
(2^jQ)\times Q & \text{if } i\in \{1,\dots,k\}.
\end{cases}
\end{align*}
\end{lemma}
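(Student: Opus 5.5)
The plan is to use that $\Delta_h^k$ annihilates $\mathcal P_{k-1}$, subtract one common minimizing polynomial, apply Lemma \ref{lem:Poinca} at each node $x+ih$, and then replace the resulting balls by cubes from $\mathbb D$ via Lemma \ref{lemCub}. Fix $h\neq{\bf 0}$ and set $y_i:=x+ih$ for $i\in\{0,\dots,k\}$. Since $\Delta_h^k P\equiv0$ for every $P\in\mathcal P_{k-1}$, for any ball $B_1$ we have
$$|\Delta_h^k f(x)|=\Bigl|\sum_{i=0}^k(-1)^{k-i}\tbinom{k}{i}\bigl[f(y_i)-P^{(k-1)}_{B_1}(f)(y_i)\bigr]\Bigr|\le 2^k\sum_{i=0}^k\bigl|f(y_i)-P^{(k-1)}_{B_1}(f)(y_i)\bigr|.$$

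\textbf{Choice of balls.} I will take a single ball $B_1$ that is admissible in Lemma \ref{lem:Poinca} for every node simultaneously. Let $c:=x+\frac k2h$, $r:=2k|h|$ and $B_1:=B(c,\frac32k|h|)$. Since $|y_i-c|\le\frac k2|h|$, we get $B_1\subset B(y_i,r)\subset 3B_1$ for each $i$. Lemma \ref{lem:Poinca} then gives, for almost every $x$,
$$|f(y_i)-P^{(k-1)}_{B_1}(f)(y_i)|\le C_{n,k}\sum_{j\in\mathbb Z_+}E_{k-1}\bigl(f,B(y_i,2^{-j}r)\bigr).$$
The ``almost every'' has to be handled for the translates $x+ih$ as well. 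For fixed $h$, the exceptional set of $x$ is a finite union of translates of a null set, so it is still null.

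\textbf{Summation in $j$.} I insert $2^{-j\epsilon}2^{j\epsilon}$ and apply H\"older's inequality in $j$:
$$\Bigl(\sum_j a_j\Bigr)^q\le\Bigl(\sum_j2^{-j\epsilon q'}\Bigr)^{q-1}\sum_j2^{j\epsilon q}a_j^q\lesssim\epsilon^{-q}\sum_j2^{j\epsilon q}a_j^q,$$
because $(1-2^{-\epsilon q'})^{-(q-1)}\lesssim(\epsilon q')^{-(q-1)}\lesssim\epsilon^{-q}$ for $\epsilon\in(0,1)$. For $q=1$ this is trivial. Raising the bound on $|\Delta_h^kf(x)|$ to the power $q$ costs only a constant depending on $k$ and $q$.

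\textbf{From balls to cubes.} The ball $B(y_i,2^{-j}r)$ is contained in the cube centered at $y_i$ with side $2^{1-j}r$. By Lemma \ref{lemCub}(ii) this cube lies in some $Q\in\mathbb D$ with $l(Q)\in(3\cdot2^{-j}r,\,6\cdot2^{-j}r]$. Since $|Q|\sim|B(y_i,2^{-j}r)|$, we get $E_{k-1}(f,B(y_i,2^{-j}r))\lesssim E_{k-1}(f,Q)$. Moreover $2^jl(Q)\in[6k|h|,12k|h|]$, so we may take $c_0:=6k$ and $c_1:=12k$. Bounding the single chosen cube by the sum over all admissible $Q$, with the indicator recording the needed membership, gives \eqref{lem-point-e1}.

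\textbf{Main obstacle: the indicators.} I expect the only delicate point to be verifying that the indicators $F_i$ equal $1$ for the chosen cube. For $i\ge1$ we need $y_i\in Q$, which holds by construction, and also $x\in2^jQ$. For $i=0$ we need $x\in Q$ and $x+h\in2^jQ$.
\begin{itemize}
\item When $j=0$, the cube $Q$ contains $B(y_i,2k|h|)$, which contains every node $y_l$, so both memberships hold.
\item When $j\ge1$, the center of $Q$ lies within sup-distance $l(Q)/2$ of $y_i$, and $y_i$ lies within sup-distance $k|h|=r/2$ of $x$ (respectively of $x+h$). So it suffices that $(2^{j-1}-\frac12)l(Q)\ge\frac r2$. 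Using $l(Q)>3\cdot2^{-j}r$, we get $(2^{j-1}-\tfrac12)l(Q)\ge\frac32(1-2^{-j})r\ge\frac34r$, which suffices.
\end{itemize}
If this constant bookkeeping turns out tighter than expected, the fallback is to enlarge $r/|h|$ slightly; Lemma \ref{lem:Poinca} still applies after adjusting the radius of $B_1$, at the cost of changing $c_0$ and $c_1$.
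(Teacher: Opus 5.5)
Your proposal is correct and follows essentially the same route as the paper: subtract a single minimizing polynomial on a ball centered at $x+\frac k2h$, apply Lemma~\ref{lem:Poinca} at each node $x+ih$, and pass to cubes in $\mathbb{D}$ via Lemma~\ref{lemCub}(ii). The one difference is how you handle $(\sum_{j}a_j)^q$: you use H\"older's inequality with weights $2^{\pm j\epsilon}$ and keep all scales, whereas the paper uses a pigeonhole argument to select a single scale $j_{x,h}$; your explicit check of the indicator memberships, and of the null set for the translates $x+ih$, supplies details the paper leaves implicit.
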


\begin{proof}
Let $f\in W^{k,1}_{\rm loc}$ and
$h\in\mathbb{R}^n\setminus\{{\bf0}\}$.
Since $\Delta_h^kP=0$ for any $P\in\mathcal{P}_{k-1}$, from
\eqref{eq-sho-hd}, it follows that, for any $x\in\mathbb{R}^n$,
\begin{align*}
|\Delta_h^kf(x)|^q
&=
\left|\Delta_h^k\left(f-P^{(k-1)}_{B_1}(f)\right)(x)\right|^q
\lesssim\sum_{i=0}^k
\left|f(x+ih)-P^{(k-1)}_{B_1}(f)(x+ih)\right|^q,
\end{align*}
where $B_1:=B(x+\frac{kh}{2},k|h|).$
Fix $i\in\{0,\ldots,k\}$. Observe that
$B_1\subset B(x+ih,2k|h|)\subset 3B_1$.
Applying Lemma \ref{lem:Poinca} with the above $B_1$ and
$B:=B(x+ih,2k|h|)$, we find that, for almost every
$x\in\mathbb{R}^n$,
\begin{align}\label{eq-pigeonhole1}
\left|f(x+ih)-P^{(k-1)}_{B_1}(f)(x+ih)\right|
\le
C_{n,k}\sum_{j\in\mathbb{Z}_+}
E_{k-1}\left(f,B\left(x+ih,2^{-j+1}k|h|\right)\right),
\end{align}
where $C_{n,k}$ is a positive constant depending only on $n$ and $k$.
Fix $x\in\mathbb{R}^n$ such that \eqref{eq-pigeonhole1} holds and let
$C_1:=\frac{1-2^{-\epsilon}}{C_{n,k}}.$
Then, from \eqref{eq-pigeonhole1}, we infer that
\begin{align*}
&C_1\sum_{j\in\mathbb{Z}_+}2^{-j\epsilon}
\left|f(x+ih)-P^{(k-1)}_{B_1}(f)(x+ih)\right| \\
&\quad =
C_{n,k}
\left|f(x+ih)-P^{(k-1)}_{B_1}(f)(x+ih)\right|\le
\sum_{j\in\mathbb{Z}_+}
E_{k-1}\left(f,B\left(x+ih,2^{-j+1}k|h|\right)\right).
\end{align*}
Thus, there exists $j_{x,h}\in\mathbb{Z}_+$ such that
\begin{align*}
&C_1 2^{-j_{x,h}\epsilon}
\left|f(x+ih)-P^{(k-1)}_{B_1}(f)(x+ih)\right|\le
E_{k-1}\left(f,B\left(x+ih,2^{-j_{x,h}+1}k|h|\right)\right).
\end{align*}
Since $\epsilon\in(0,1)$, it follows that $(1-2^{-\epsilon})^{-1}\lesssim
\epsilon^{-1}$. Hence,
\begin{align}\label{eq-point-eps}
&\left|f(x+ih)-P^{(k-1)}_{B_1}(f)(x+ih)\right|^q
\lesssim
\epsilon^{-q}2^{j_{x,h}\epsilon q}
\left[
E_{k-1}\left(f,B\left(x+ih,2^{-j_{x,h}+1}k|h|\right)\right)
\right]^q .
\end{align}
In addition, applying Lemma \ref{lemCub}, we find that there exist
$C_{n}\in(1,\infty)$, depending only on $n$, and
$Q_{x,h}\in \mathbb{D}$ such that
\begin{align*}
B\left(x+ih,2^{-j_{x,h}+1}k|h|\right)
\subset Q_{x,h}\subset
B\left(x+ih,2^{-j_{x,h}+1}C_{n}k|h|\right).
\end{align*}
Then
$2^{j_{x,h}}l(Q_{x,h})\in[c_0|h|,c_1|h|],$
where $c_0:=2k\omega_n^{\frac1n}$ and
$c_1:=2C_{n}k\omega_n^{\frac1n}$ with $\omega_n$ denoting the
volume of the unit ball in $\mathbb{R}^n$. These further imply
\begin{align}\label{pig-e1}
(x,h)\in
\begin{cases}
\displaystyle
Q_{x,h}\times\left[\left(2^{j_{x,h}}Q_{x,h}\right)-\{x\}\right]
& \text{if } i=0,\\
\displaystyle
\left(2^{j_{x,h}}Q_{x,h}\right)\times
\frac{Q_{x,h}-\{x\}}{i}
& \text{if } i\in\{1,\dots,k\}.
\end{cases}
\end{align}
Moreover, from the comparability of $Q_{x,h}$ and
$B(x+ih,2^{-j_{x,h}+1}k|h|)$, it follows that
\begin{align*}
E_{k-1}\left(f,B\left(x+ih,2^{-j_{x,h}+1}k|h|\right)\right)
\lesssim E_{k-1}(f,Q_{x,h}).
\end{align*}
This, together with \eqref{eq-point-eps} and \eqref{pig-e1}, further implies that
\begin{align*}
&\left|f(x+ih)-P^{(k-1)}_{B_1}(f)(x+ih)\right|^q
\lesssim
\epsilon^{-q}
\sum_{j\in\mathbb{Z}_+}2^{j\epsilon q}
\sum_{\genfrac{}{}{0pt}{}{Q\in\mathbb{D}}{2^jl(Q)
\in[c_0|h|,c_1|h|]}}
[E_{k-1}(f,Q)]^qF_i(x,h).
\end{align*}
Combining this and the preceding estimate for $|\Delta_h^kf(x)|^q$,
we further obtain \eqref{lem-point-e1}, which completes the proof of
Lemma \ref{lem-point}.
\end{proof}

We shall use the following elementary consequence of H\"older's inequality.
For the reader's convenience, we give its proof.

\begin{lemma}\label{lem-discrete}
Let $I$ be a finite index set with $\#I=N$, and let
$\{a_\nu\}_{\nu\in I}$ and $\{b_\nu\}_{\nu\in I}$ be nonnegative
sequences. If $\theta_1,\theta_2\in (0,\infty)$ and
$\theta:=\theta_1+\theta_2$, then
\begin{align*}
\sum_{\nu\in I}a_\nu^{\theta_2}b_\nu^{\theta_1}
\le
N^{(1-\theta)_+}
\left(\sum_{\nu\in I}a_\nu\right)^{\theta_2}
\left(\sum_{\nu\in I}b_\nu\right)^{\theta_1}.
\end{align*}
\end{lemma}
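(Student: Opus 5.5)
We need to prove $\sum_{\nu\in I} a_\nu^{\theta_2}b_\nu^{\theta_1}\le N^{(1-\theta)_+}(\sum_{\nu\in I} a_\nu)^{\theta_2}(\sum_{\nu\in I} b_\nu)^{\theta_1}$. We treat the two cases $\theta\ge1$ and $\theta<1$ separately; by homogeneity we may normalize. If $\sum_{\nu\in I} a_\nu=0$ or $\sum_{\nu\in I} b_\nu=0$, then every term on the left vanishes, since $\theta_1,\theta_2>0$. Otherwise, replacing $a_\nu$ by $a_\nu/\sum_{\mu\in I} a_\mu$ and $b_\nu$ by $b_\nu/\sum_{\mu\in I} b_\mu$ scales both sides by the same factor. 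It therefore suffices to assume $\sum_{\nu\in I} a_\nu=\sum_{\nu\in I} b_\nu=1$, so that $a_\nu,b_\nu\in[0,1]$.

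\emph{Case $\theta\ge1$.} Here the claimed factor is $N^0=1$. Set $t:=\theta_2/\theta\in(0,1)$. By the weighted AM--GM inequality, $a_\nu^{t}b_\nu^{1-t}\le ta_\nu+(1-t)b_\nu\le1$. Hence
\[
a_\nu^{\theta_2}b_\nu^{\theta_1}=\left(a_\nu^{t}b_\nu^{1-t}\right)^{\theta}\le a_\nu^{t}b_\nu^{1-t},
\]
using $\theta\ge1$ and $a_\nu^{t}b_\nu^{1-t}\le1$. Summing over $\nu$ and applying AM--GM again gives
\[
\sum_{\nu\in I}a_\nu^{\theta_2}b_\nu^{\theta_1}\le \sum_{\nu\in I}\left[ta_\nu+(1-t)b_\nu\right]=1.
\]
Equivalently, one can use H\"older's inequality with exponents $1/t$ and $1/(1-t)$.

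\emph{Case $\theta<1$.} Apply H\"older's inequality with the three exponents $1/\theta_2$, $1/\theta_1$ and $1/(1-\theta)$, whose reciprocals $\theta_2$, $\theta_1$, $1-\theta$ sum to $1$. The three sequences are $a_\nu^{\theta_2}$, $b_\nu^{\theta_1}$ and the constant $1$. This yields
\[
\sum_{\nu\in I}a_\nu^{\theta_2}b_\nu^{\theta_1}\cdot 1\le\left(\sum_{\nu\in I}a_\nu\right)^{\theta_2}\left(\sum_{\nu\in I}b_\nu\right)^{\theta_1}N^{1-\theta},
\]
which is exactly the claim with $N^{(1-\theta)_+}=N^{1-\theta}$.

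The only delicate point is checking the exponents in each case; in particular, the first case relies on the normalization $a_\nu,b_\nu\le1$, which is what allows $\theta\ge1$ to be used.
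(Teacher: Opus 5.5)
Your proof is correct, but it is organized differently from the paper's. The paper avoids a case split at the top level. It applies H\"older's inequality with exponents $\theta/\theta_2$ and $\theta/\theta_1$ to bound the left-hand side by $(\sum_\nu a_\nu^\theta)^{\theta_2/\theta}(\sum_\nu b_\nu^\theta)^{\theta_1/\theta}$. It then applies to each factor the elementary bound $\sum_\nu c_\nu^\alpha\le N^{(1-\alpha)_+}(\sum_\nu c_\nu)^\alpha$ with $\alpha=\theta$, and the two powers of $N$ recombine into $N^{(1-\theta)_+}$.

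You split into cases from the start:
\begin{enumerate}
\item[$\bullet$] For $\theta\ge1$ you normalize, use $x^\theta\le x$ on $[0,1]$, and then use weighted AM--GM. These are the paper's two ingredients ($\ell^1\subset\ell^\theta$ and H\"older) applied in the opposite order.
\item[$\bullet$] For $\theta<1$ your three-factor H\"older with the constant sequence $1$ merges the paper's two steps into one and needs no normalization.
\end{enumerate}

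The paper's version is shorter and uniform in $\theta$, but it quotes the power-sum bound without proof, and that bound contains exactly your case distinction. Your version is self-contained, using only AM--GM and H\"older.
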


\begin{proof}
By H\"older's inequality and the elementary estimate
$
\sum_{\nu\in I} c_\nu^\alpha
\le
N^{(1-\alpha)_+}
\left(\sum_{\nu\in I} c_\nu\right)^\alpha
$
for any $\alpha\in(0,\infty)$ and any nonnegative sequence
$\{c_\nu\}_{\nu\in I}$, we obtain
\begin{align*}
\sum_{\nu\in I}a_\nu^{\theta_2}b_\nu^{\theta_1}
&\le
\left(\sum_{\nu\in I}a_\nu^\theta\right)^{\frac{\theta_2}{\theta}}
\left(\sum_{\nu\in I}b_\nu^\theta\right)^{\frac{\theta_1}{\theta}} \\
&\le
N^{(1-\theta)_+\frac{\theta_2}{\theta}}
N^{(1-\theta)_+\frac{\theta_1}{\theta}}
\left(\sum_{\nu\in I}a_\nu\right)^{\theta_2}
\left(\sum_{\nu\in I}b_\nu\right)^{\theta_1} \\
&=
N^{(1-\theta)_+}
\left(\sum_{\nu\in I}a_\nu\right)^{\theta_2}
\left(\sum_{\nu\in I}b_\nu\right)^{\theta_1}.
\end{align*}
This completes the proof of Lemma \ref{lem-discrete}.
\end{proof}

Next, we are ready to prove Proposition \ref{proPoin}.

\begin{proof}[Proof of Proposition \ref{proPoin}]
Let $f\in W^{k,1}_{\rm loc}$ and $r\in (0,\infty)$.
We first show (i). Define
\begin{align*}
\delta_0:=s_0k-n\left(\frac{1-s_0}{p_1}+\frac{s_0}{p_2}-\frac1q\right)_+.
\end{align*}
Then $\delta_0>0$. Let $\epsilon:=\frac{\min\{1,\delta_0\}}{2}$.
Applying Lemma \ref{lem-point}, we find that, for almost every $x\in \mathbb{R}^n$,
\begin{align}\label{eq-p00}
\int_{|h|\ge r}\frac{|\Delta_h^k f(x)|^q}{|h|^{n+skq}}\,dh
&\lesssim\sum_{i=0}^{k}\sum_{j\in\mathbb{Z}_+}2^{j\epsilon q}
\sum_{\genfrac{}{}{0pt}{}{Q\in\mathbb{D}}{l(Q)\ge c_02^{-j}r}}
\left|2^jQ\right|^{-\frac{n+skq}{n}}
[E_{k-1}(f,Q)]^q \int_{\mathbb{R}^n}F_i(x,h)\,dh\notag\\
&=: \sum_{i=0}^{k} I_{i,r}(x),
\end{align}
where $c_0$ and $F_i$ are as in Lemma \ref{lem-point}. For any cube $Q$, define
\begin{align*}
T_m(f,Q):=
\begin{cases}
\displaystyle
\fint_Q |f(x)-f_Q|\,dx & \text{if } m=0,\\
\displaystyle
\fint_Q |\nabla^m f(x)|\,dx & \text{if } m=k.
\end{cases}
\end{align*}
From the Poincar\'{e} inequality (see, for instance,
\cite[Theorem 13.27]{Leo17}), we infer that, for any cube $Q$ and
$m\in\{0,k\}$,
\begin{align}\label{eq-TT}
E_{k-1}(f,Q)\lesssim |Q|^{\frac{m}{n}}T_m(f,Q).
\end{align}
Next, we fix $x\in\mathbb{R}^n$ satisfying \eqref{eq-p00} and estimate
$I_{i,r}(x)$ by considering the following two cases for $i$.

\emph{Case 1: $i=0$.} In this case, by the definition of $F_0$, we obtain
\begin{align}\label{eq-cas1}
I_{0,r}(x)
&\lesssim
\sum_{j\in\mathbb{Z}_+}2^{j\epsilon q}
\sum_{\genfrac{}{}{0pt}{}{Q\in\mathbb{D}}{l(Q)\ge c_02^{-j}r}}
\left|2^jQ\right|^{-\frac{skq}{n}}
[E_{k-1}(f,Q)]^q {\bf 1}_Q(x).
\end{align}
From \eqref{eq-TT}, it follows that, for any cube $Q$,
\begin{align*}
&\left|2^jQ\right|^{-\frac{skq}{n}}[E_{k-1}(f,Q)]^q {\bf 1}_Q(x)\\
&\quad\lesssim
\left|2^jQ\right|^{-\frac{skq}{n}}
[T_0(f,Q)]^{q(1-s_0)}
\left[|Q|^{\frac{k}{n}}T_k(f,Q)\right]^{qs_0}{\bf 1}_Q(x)\\
&\quad\le
2^{-js_0kq}\left|2^jQ\right|^{\frac{kq(s_0-s)}{n}}
[M^{\sharp}(f)(x)]^{q(1-s_0)}
\left[M\left(|\nabla^k f|\right)(x)\right]^{qs_0}{\bf 1}_Q(x).
\end{align*}
This, combined with \eqref{eq-cas1}, implies that
\begin{align}\label{eq-cas11}
I_{0,r}(x)
&\lesssim [M^{\sharp}(f)(x)]^{q(1-s_0)}
\left[M\left(|\nabla^k f|\right)(x)\right]^{qs_0}\notag\\
&\quad\times
\sum_{j\in\mathbb{Z}_+}2^{jq(\epsilon-s_0k)}
\sum_{\genfrac{}{}{0pt}{}{Q\in\mathbb{D}}{l(Q)\ge c_02^{-j}r}}
\left|2^jQ\right|^{\frac{kq(s_0-s)}{n}}{\bf 1}_Q(x).
\end{align}
For any $\alpha\in\{0,\frac13,\frac23\}^n$ and $j\in\mathbb{Z}_+$,
let $Q_\alpha\in\mathcal{D}^\alpha$ be the minimal cube containing $x$
such that $l(Q_\alpha)\ge c_02^{-j}r$. Hence, for any
$P\in\mathcal{D}^\alpha$ containing $x$ with $l(P)\ge c_02^{-j}r$, it
holds that $P\supset Q_\alpha$ and $|P|=2^{\nu n}|Q_\alpha|$ for some
$\nu\in\mathbb{Z}_+$. Therefore, from these and the assumption
$s\in(s_0,1)$, we deduce that
\begin{align}\label{eq-ddq}
&\sum_{\genfrac{}{}{0pt}{}{Q\in\mathbb{D}}{l(Q)\ge c_02^{-j}r}}
\left|2^jQ\right|^{\frac{kq(s_0-s)}{n}}{\bf 1}_Q(x)\notag\\
&\quad=
\sum_{\alpha\in\{0,\frac13,\frac23\}^n}
\sum_{\genfrac{}{}{0pt}{}{P\in\mathcal{D}^\alpha}{P\supset Q_\alpha}}
\left|2^jP\right|^{\frac{kq(s_0-s)}{n}}=
\sum_{\alpha\in\{0,\frac13,\frac23\}^n}
\sum_{\nu=0}^{\infty}
2^{\nu kq(s_0-s)}\left|2^jQ_\alpha\right|^{\frac{kq(s_0-s)}{n}}\notag\\
&\quad\lesssim \frac{r^{kq(s_0-s)}}{1-2^{kq(s_0-s)}}
\lesssim \frac{r^{kq(s_0-s)}}{s-s_0},
\end{align}
where the implicit positive constants depend only on $k$, $q$, and
$s_0$. Combining this, \eqref{eq-cas1}, \eqref{eq-cas11}, and
H\"older's inequality, we conclude that
\begin{align}\label{eq-Ic1}
I_{0,r}(x)
&\lesssim
\frac{r^{kq(s_0-s)}}{s-s_0}
[M^{\sharp}(f)(x)]^{q(1-s_0)}
\left[M\left(|\nabla^k f|\right)(x)\right]^{qs_0}\notag\\
&\le
\frac{r^{kq(s_0-s)}}{s-s_0}
[M_{p_1}^{\sharp}(f)(x)]^{q(1-s_0)}
\left[M_{p_2}\left(|\nabla^k f|\right)(x)\right]^{qs_0}.
\end{align}
This completes the estimation of $I_{0,r}(x)$.

\emph{Case 2: $i\in\{1,\ldots,k\}$.} In this case, by the definition of
$F_i$, we find that
\begin{align}\label{eq-III}
I_{i,r}(x)
&\lesssim
\sum_{j\in\mathbb{Z}_+}2^{j\epsilon q}
\sum_{\genfrac{}{}{0pt}{}{Q\in\mathbb{D}}{l(Q)\ge c_02^{-j}r}}
\left|2^jQ\right|^{-\frac{n+skq}{n}}|Q|
[E_{k-1}(f,Q)]^q {\bf 1}_{2^jQ}(x).
\end{align}
Using \eqref{eq-TT} and H\"older's inequality, we obtain, for any cube
$Q$,
\begin{align*}
&\left|2^jQ\right|^{-\frac{n+skq}{n}}|Q|[E_{k-1}(f,Q)]^q\\
&\quad\le
\left|2^jQ\right|^{-\frac{skq+n}{n}}|Q|
[T_0(f,Q)]^{q(1-s_0)}
\left[|Q|^{\frac{k}{n}}T_k(f,Q)\right]^{qs_0}\\
&\quad=
2^{-js_0kq-jn}\left|2^jQ\right|^{\frac{kq(s_0-s)}{n}}
[T_0(f,Q)]^{q(1-s_0)}[T_k(f,Q)]^{qs_0}\\
&\quad\le
2^{-js_0kq-jn}\left|2^jQ\right|^{\frac{kq(s_0-s)}{n}}
\left[\fint_Q |f(z)-f_Q|^{p_1}\,dz\right]^{\frac{q(1-s_0)}{p_1}}
\left[\fint_Q |\nabla^k f(z)|^{p_2}\,dz\right]^{\frac{qs_0}{p_2}}.
\end{align*}
This, combined with \eqref{eq-III}, further implies that
\begin{align}\label{eq-Hqj}
I_{i,r}(x)
&\le
\sum_{j\in\mathbb{Z}_+}2^{jq(\epsilon-s_0k)-jn}
\sum_{\genfrac{}{}{0pt}{}{Q\in\mathbb{D}}{l(Q)\ge c_02^{-j}r}}
\left|2^jQ\right|^{\frac{kq(s_0-s)}{n}}\notag\\
&\quad\times\left[\fint_Q |f(z)-f_Q|^{p_1}\,dz\right]^{\frac{q(1-s_0)}{p_1}}
\left[\fint_Q |\nabla^k f(z)|^{p_2}\,dz\right]^{\frac{qs_0}{p_2}}
{\bf 1}_{2^jQ}(x).
\end{align}
This estimate cannot be converted directly into a
product of maximal functions as in Case 1. Indeed, the point $x$ is only known to
belong to the dilated cube $2^jQ$, whereas the averages in
\eqref{eq-Hqj} are taken over the smaller cube $Q$. Thus, the averaging
cube may not contain $x$. To overcome this
possible mismatch, we dominate each dilated
cube $2^jQ$ using the shifted dyadic
grids again. We now introduce this domination procedure.
For any $\nu\in\mathbb{Z}$, let
$\mathbb{D}_\nu:=\{Q\in\mathbb{D}:l(Q)=2^{-\nu}\}$ and, for any cube
$Q$, let
\begin{align}\label{eq-dom}
\operatorname{Dom}(Q):=
\left\{P\in\mathbb{D}:P\supset Q,\ l(P)\in\left(\frac32l(Q),3l(Q)\right]\right\}.
\end{align}
By Lemma \ref{lemCub}(ii), we conclude that $\operatorname{Dom}(Q)$ is
nonempty for any cube $Q$ and, if $l(Q)=2^\nu$ for some
$\nu\in\mathbb{Z}$, then $l(J)=2l(Q)$ for any
$J\in\operatorname{Dom}(Q)$. From these, H\"older's inequality, and
Tonelli's theorem, we deduce that, for any $j\in\mathbb{Z}_+$,
\begin{align}\label{eq-Qvol}
&\sum_{\genfrac{}{}{0pt}{}{Q\in\mathbb{D}}{l(Q)\ge c_02^{-j}r}}
\left|2^jQ\right|^{\frac{kq(s_0-s)}{n}}
\left[\fint_Q |f(z)-f_Q|^{p_1}\,dz\right]^{\frac{q(1-s_0)}{p_1}}
\left[\fint_Q |\nabla^k f(z)|^{p_2}\,dz\right]^{\frac{qs_0}{p_2}}
{\bf 1}_{2^jQ}(x)\notag\\
&\quad\lesssim
\sum_{\genfrac{}{}{0pt}{}{Q\in\mathbb{D}}{l(Q)\ge c_02^{-j}r}}
\sum_{J\in\operatorname{Dom}(2^jQ)}
|J|^{\frac{kq(s_0-s)}{n}}
\left[\fint_Q |f(z)-f_Q|^{p_1}\,dz\right]^{\frac{q(1-s_0)}{p_1}}
\left[\fint_Q |\nabla^k f(z)|^{p_2}\,dz\right]^{\frac{qs_0}{p_2}}
{\bf1}_J(x)\notag\\
&\quad=
\sum_{\genfrac{}{}{0pt}{}{J\in\mathbb{D}}{l(J)\ge 2c_0r}}
\sum_{\genfrac{}{}{0pt}{}{Q\in\mathbb{D}}{J\in\operatorname{Dom}(2^jQ)}}
|J|^{\frac{kq(s_0-s)}{n}}
\left[\fint_Q |f(z)-f_Q|^{p_1}\,dz\right]^{\frac{q(1-s_0)}{p_1}}
\left[\fint_Q |\nabla^k f(z)|^{p_2}\,dz\right]^{\frac{qs_0}{p_2}}
{\bf1}_J(x).
\end{align}
Using H\"older's inequality, we conclude that, for any cube $Q$ with
$J\in\operatorname{Dom}(2^jQ)$,
\begin{align}\label{eq-hldd}
\int_Q |f(z)-f_Q|^{p_1}\,dz
&\lesssim
\int_Q |f(z)-f_J|^{p_1}\,dz+|Q||f_J-f_Q|^{p_1}
\lesssim \int_Q |f(z)-f_J|^{p_1}\,dz.
\end{align}
Observe that, for any $J\in\mathbb{D}$ and $j\in\mathbb{Z}_+$,
$$
\#\left\{Q\in\mathbb{D}: J\in\operatorname{Dom}(2^jQ)\right\}\sim 2^{jn}.
$$
From this, \eqref{eq-hldd}, and Lemma \ref{lem-discrete}, it follows
that, for any $J\in\mathbb{D}$ and $j\in\mathbb{Z}_+$,
\begin{align}\label{eq-tt}
&\sum_{\genfrac{}{}{0pt}{}{Q\in\mathbb{D}}{J\in\operatorname{Dom}(2^jQ)}}
\left[\fint_Q |f(z)-f_Q|^{p_1}\,dz\right]^{\frac{q(1-s_0)}{p_1}}
\left[\fint_Q |\nabla^k f(z)|^{p_2}\,dz\right]^{\frac{qs_0}{p_2}}\notag\\
&\quad\lesssim
2^{jn(1-\theta)_+}
\left[\sum_{\genfrac{}{}{0pt}{}{Q\in\mathbb{D}}{J\in\operatorname{Dom}(2^jQ)}}
\fint_Q |f(z)-f_J|^{p_1}\,dz\right]^{\frac{q(1-s_0)}{p_1}}
\left[\sum_{\genfrac{}{}{0pt}{}{Q\in\mathbb{D}}{J\in\operatorname{Dom}(2^jQ)}}
\fint_Q |\nabla^k f(z)|^{p_2}\,dz\right]^{\frac{qs_0}{p_2}}\notag\\
&\quad\le
2^{jn(1-\theta)_+}2^{jn\theta}
\left[\fint_J |f(z)-f_J|^{p_1}\,dz\right]^{\frac{q(1-s_0)}{p_1}}
\left[\fint_J |\nabla^k f(z)|^{p_2}\,dz\right]^{\frac{qs_0}{p_2}},
\end{align}
where $\theta=q(\frac{1-s_0}{p_1}+\frac{s_0}{p_2})$. Note that
$$
(1-\theta)_+ +\theta-1= (\theta-1)_+
= q\left(\frac{1-s_0}{p_1}+\frac{s_0}{p_2}-\frac1q\right)_+ .
$$
Using \eqref{eq-Hqj}, \eqref{eq-Qvol}, \eqref{eq-ddq}, \eqref{eq-tt},
and the choice of $\epsilon$, we find that
\begin{align}\label{eq-Ic2}
I_{i,r}(x)
&\lesssim
\sum_{j\in\mathbb{Z}_+}2^{jq(\epsilon-s_0k)+jn(\theta-1)_+}
\frac{r^{kq(s_0-s)}}{s-s_0}
[M_{p_1}^{\sharp}(f)(x)]^{q(1-s_0)}
\left[M_{p_2}\left(|\nabla^k f|\right)(x)\right]^{qs_0}\notag\\
&\lesssim
\frac{r^{kq(s_0-s)}}{s-s_0}
[M_{p_1}^{\sharp}(f)(x)]^{q(1-s_0)}
\left[M_{p_2}\left(|\nabla^k f|\right)(x)\right]^{qs_0}.
\end{align}
This then completes the estimate of
$I_{i,r}$ in this case.

Combining \eqref{eq-p00}, \eqref{eq-Ic1}, and \eqref{eq-Ic2}, we obtain
\eqref{eq-Pout}, which completes the proof of (i).

Next, we consider (ii). Choose
\begin{align*}
\widetilde\delta_0:=\widetilde{s_0}k-
n\left(\frac{1-\widetilde{s_0}}{p_1}+\frac{\widetilde{s_0}}{p_2}-\frac1q\right)_+>0
\end{align*}
and define $\widetilde{\epsilon}:=\frac{\min\{1,\widetilde\delta_0\}}2$. Using Lemma
\ref{lem-point}, we obtain, for almost every $x\in\mathbb{R}^n$,
\begin{align}\label{eq-p2}
\int_{|h|\le r}\frac{|\Delta_h^k f(x)|^q}{|h|^{n+skq}}\,dh
&\lesssim\sum_{i=0}^{k}\sum_{j\in\mathbb{Z}_+}2^{j\widetilde{\epsilon} q}
\sum_{\genfrac{}{}{0pt}{}{Q\in\mathbb{D}}{l(Q)\le c_12^{-j}r}}
\left|2^jQ\right|^{-\frac{n+skq}{n}}
[E_{k-1}(f,Q)]^q \int_{\mathbb{R}^n}F_i(x,h)\,dh,
\end{align}
where $c_1$ and $F_i$ are as in Lemma \ref{lem-point}. Observe that
\eqref{eq-p2} differs from \eqref{eq-p00} only
in the edge-length
restriction for the cubes in the innermost sum. Repeating the proof of
(i), with $s_0$ replaced by $\widetilde{s_0}$ and with the corresponding
geometric series summed in the opposite direction, we obtain the factor
$(\widetilde{s_0}-s)^{-1}$ and hence (ii).

Next, we prove (iii). Note that
$\sum_{i=0}^{k}(-1)^{k-i}\binom{k}{i}=0,$
which, together with \eqref{eq-sho-hd}, implies that
\begin{align*}
\Delta_h^k f(\cdot)
=
\sum_{i=1}^{k}(-1)^{k-i}\binom{k}{i}[f(\cdot+ih)-f(\cdot)].
\end{align*}
From this and Minkowski's inequality, it follows that, for almost every
$x\in\mathbb{R}^n$,
\begin{align*}
\left[\int_{|h|\ge r}\frac{|\Delta_h^k f(x)|^q}{|h|^{n+skq}}\,dh\right]^{\frac{1}{q}}
&\lesssim
\sum_{i=1}^{k}
\left[\sum_{j=0}^{\infty}
\int_{2^{j}r\le |h|\le 2^{j+1}r}
\frac{|f(x+ih)-f(x)|^q}{|h|^{n+skq}}\,dh\right]^{\frac{1}{q}}\\
&\lesssim
\sum_{i=1}^{k}
\left[\sum_{j=0}^{\infty}2^{-jskq}r^{-skq}
\fint_{|h|<2^{j+1}r}|f(x+ih)-f(x)|^q\,dh\right]^{\frac{1}{q}}\\
&\lesssim
s^{-\frac{1}{q}}r^{-sk}M_q(f-f(x))(x).
\end{align*}
This implies \eqref{eq-Pout2} and hence (iii),
which then completes the proof of Proposition \ref{proPoin}.
\end{proof}

Next, we give the proof of Theorem \ref{thmGNS}.
To this end, we need some necessary definitions.
The definition
of ball Banach function spaces is as follows,
which was introduced in \cite[Definition 2.1]{SHYY17}.

\begin{definition}\label{def-X}
A Banach space $X \subset L^0 $,
equipped with a norm
$\|\cdot\|_{X }$ which makes sense for all functions in
$L^0 $,
is called a \emph{ball Banach function space}
(for short, \emph{{\rm BBF} space})
if $X $ satisfies that
\begin{enumerate}[{\rm (i)}]
\item for any $f\in L^0 $,
if $\|f\|_{X } =0$,
then $f=0$ almost
everywhere;
\item if $f,g\in L^0 $ satisfy that $|g|\leq
|f|$
almost everywhere, then
$\|g\|_{X } \leq \|f\|_{X }$;
\item if a sequence
$\{f_m\}_{m\in{\mathbb{N}}}$ in $L^0 $
satisfies that
$0\leq f_m \uparrow f$ almost everywhere as $m\to\infty$,
then $\|f_m\|_{X } \uparrow \|f\|_{X }$
as $m\to\infty$;
\item for any ball $B$ in $\mathbb{R}^n$,
${\bf 1}_{B}\in {X }$;

\item for any ball $B$ in $\mathbb{R}^n$,
there exists a positive constant $C_{(B)}$,
depending on $B$,
such that, for any $f\in {X}$,
$$
\int_{B}|f(x)|\,dx \leq C_{(B)}\|f\|_{{X }}.
$$
\end{enumerate}
\end{definition}

\begin{remark}\label{rem-bbf}
Concerning the definition of BBF spaces,
we have the following remarks.
\begin{enumerate}[{\rm (i)}]

\item  From \cite[Proposition 1.2.36]{LYH22}, we deduce that
both (ii) and (iii) of Definition \ref{def-X}
imply that any ball Banach function space is complete.

\item If we replace any ball
$B$ in $\mathbb{R}^n$ by any bounded measurable set
$S$ in $\mathbb{R}^n$, then, by \cite[Remark 2.5(ii)]{YHYY22ams},
we obtain an equivalent formulation of BBF spaces.
Moreover, if we replace any ball $B$
by any measurable $E$ with $|E|<\infty$, then
we obtain the definition of \emph{Banach function spaces},
which was originally
introduced by Bennett and Sharpley in
\cite[Chapter 1, Definitions 1.1 and 1.3]{BS88}.
Using their definitions,
we easily find that
a Banach function space is always a ball Banach function space.
However,
the converse is not necessarily true
(see, for instance, \cite[p.\,9]{SHYY17}).

\item If we replace (iv)
by the \emph{saturation property}
that,
for any measurable set
$E$ in $\mathbb{R}^n$
with $|E|\in (0,\infty)$, there exists a measurable
set $F\subset E$ with $|F|\in (0,\infty)$ satisfying that
${\bf 1}_{F}\in X $,
then we obtain the definition of Banach function spaces in the terminology
of Lorist and Nieraeth \cite[p.\,251]{LN24im}. Moreover,
by \cite[Proposition 2.5]{ZYY23ccm}
(see also \cite[Proposition 4.21]{Nie23}),
we conclude that, if the normed vector space $X $ under
consideration satisfies the additional assumption that
the Hardy--Littlewood maximal
operator $\mathcal{M}$ is weakly bounded on one of its convexifications,
then the definition of Banach function spaces in \cite{LN24im}
coincides with the definition of ball
Banach function spaces. Thus, under this additional assumption, working with
ball Banach function spaces in the sense of Definition \ref{def-X} or Banach
function spaces in the sense of \cite{LN24im} would
yield exactly the same results.
\end{enumerate}
\end{remark}

\begin{definition}
Let $X$ be a {\rm BBF} space.
For any $p\in (0,\infty)$, the
\emph{$p$-convexification}
$X^p $ of $X $ is defined by setting
$X^p :=\{f\in L^0 :
|f|^p \in X \}$
and is equipped with the \emph{quasi-norm}
$\left\|f\right\|_{X^p }
:=\left\|\,|f|^p\right\|_{X }^{\frac{1}{p}}$
for any $f\in X^p $.
\end{definition}

The following definition of  Calder\'on--Lozanovski\u{\i} spaces can be found in \cite[p.\,123]{Cal64}.
For more related results, we refer to \cite{BS88,Nie23}.

\begin{definition}\label{def-CL}
Let $X$ and $Y$ be {\rm BBF} spaces. For any $s\in (0,1)$,
the \emph{Calder\'on--Lozanovski\u{\i} space} $X^{1-s}Y^{s}$
is defined by setting
\begin{align*}
X^{ 1-s}Y^{s}:=
\left\{h\in L^0: |h|\le |f|^{ 1-s}|g|^{ s}\ \text{for some}\
f\in X\ \text{and}\ g\in Y\right\},
\end{align*}
equipped with the \emph{norm}
\begin{align}\label{eq-nnn}
\|h\|_{X^{ 1-s}Y^{ s}} :=
\inf\left\{\|f\|^{ 1-s}_{X}\|g\|^{s}_{Y}: |h|\le |f|^{ 1-s}|g|^{s}\right\}.
\end{align}
\end{definition}

\begin{remark}\label{rem-cp}
Let $X$ and $Y$ be BBF spaces.
\begin{enumerate}[{\rm (i)}]
\item By \cite[p.\,123]{Cal64} and the definition of {\rm BBF} spaces, we conclude that,
for any $ s\in (0,1)$, $X^{ 1-s}Y^{ s}$ is also a {\rm BBF} space;
see also \cite[Section 2.3]{Nie26}.

\item From \eqref{eq-nnn}, we infer that, for any $s\in (0,1)$
and $f\in X\cap Y$,
\begin{align}\label{eq-UP}
\|f\|_{X^{1-s}Y^s} \leq  \|f\|^{1-s}_X\|f\|^{s}_Y.
\end{align}
\item By the definition, we easily find that, for any $s\in(0,1)$,
$(L^\infty)^{1-s}Y^s=Y^{\frac1s}$.
\end{enumerate}
\end{remark}

Now, we present the definition
of ball Banach Sobolev spaces, which was introduced
in \cite[Definition 2.6]{DGPYYZ24}.

\begin{definition}
Let $k\in {\mathbb{N}}$ and
$X $ be a {\rm BBF} space.
\begin{enumerate}[{\rm (i)}]
\item The \emph{inhomogeneous ball Banach Sobolev space}
$W^{k,X}$ is defined to be the set
of all $f\in X$ such that,
for any multi-index
$\alpha\in \mathbb{Z}_{+}^n$ with
$|\alpha|\le k$,
the $\alpha$-th weak partial derivative
$\partial^{\alpha} f$ of $f$ exists and
$\partial^{\alpha} f \in X$.
Moreover, the \emph{norm} $\|\cdot\|_{{W}^{k,X}}$ of $W^{k,X}$
is defined by setting, for any $f\in W^{k,X}$,
$\|f\|_{{W}^{k,X}}
:=\sum_{\alpha\in \mathbb{Z}_{+}^n,\, |\alpha|\le k}
\|\partial^{\alpha} f\|_{X}.$

\item 	The \emph{homogeneous ball Banach Sobolev space}
$\dot{W}^{k,X}$ is defined to be the set
of all $f\in L^{1}_{\rm loc}$ such that,
for any multi-index
$\alpha\in \mathbb{Z}_{+}^n$ with
$|\alpha|= k$,
the $\alpha$-th weak partial derivative
$\partial^{\alpha} f$ of $f$ exists and
$\partial^{\alpha} f \in X$.
Moreover, the \emph{seminorm} $\|\cdot\|_{\dot{W}^{k,X}}$
of $\dot{W}^{k,X}$
is defined by setting, for any $f\in \dot{W}^{k,X}$,
$\|f\|_{\dot{W}^{k,X}}
:=\sum_{\alpha\in \mathbb{Z}_{+}^n,\, |\alpha|= k}
\|\partial^{\alpha} f\|_{X}.$
\end{enumerate}
\end{definition}

To describe the boundedness of the Hardy--Littlewood
maximal operator $M$ on the convexification of {\rm BBF} spaces,
we need the following definition of the
lower generalized Boyd index, which was introduced in
\cite[Definition 2.1]{Ho12} and \cite[Definition 2.17]{SHYY17}.

\begin{definition}\label{def-Boyd}
Let $X$ be a {\rm BBF} space. The \emph{lower
generalized Boyd index} of $X$ is defined by setting
\begin{align*}
p_{X}:=\sup\left(\left\{p\in (1,\infty):
X^{\frac{1}{p}} \text{ is a {\rm BBF} space and } M\ \text{is bounded on $X^{\frac{1}{p}}$} \right\}\cup \{1\}\right).
\end{align*}
\end{definition}

\begin{remark}\label{rem-Boyd}
Let $X$ be a {\rm BBF} space with the
lower generalized Boyd index $p_X$.
\begin{enumerate}
\item[{\rm(i)}] As pointed out in \cite[p.\,98]{Ho12},
if $X$ is a rearrangement invariant Banach function space
(see, for instance, \cite[p.\,59, Definition 4.1]{BS88} for its definition),
then $p_X$ is exactly the reciprocal of the
well-known lower Boyd index of rearrangement invariant spaces
(see, for instance, \cite[p.\,149, Definition 5.12]{BS88}
for its definition).
\item[{\rm(ii)}] If $p_{X}\in (1,\infty)$, then
it holds that, for any $m\in \mathbb{Z}_+$,
\begin{align}\label{eq-wkpx}
\dot{W}^{m,X} \subset\bigcap_{1 \le p < p_X} W^{m,p}_{\rm loc}.
\end{align}
Indeed, in this case,  for any $p\in [1,p_X)$,
$X^{\frac{1}{p}}$ is a {\rm BBF} space.
By this and the definition
of {\rm BBF} spaces, we find that, for any $m\in \mathbb{Z}_+$ and $f\in \dot{W}^{m,X}$,
$|\nabla^m f|^p \in X^{\frac{1}{p}}\subset L^1_{\rm loc}$,
which, together with \cite[Sect. 1.1.2, Theorem]{ma2011},
further implies $f\in W^{m,p}_{\rm loc}$.
This completes the proof of \eqref{eq-wkpx}.
\end{enumerate}
\end{remark}

Using $M_{p}^{\sharp} (f)$, we can give the definition of
BMO spaces. For any $p\in [1,\infty)$, the \emph{${{\rm BMO}}_{p}$ space} is the set of
all $f\in L^p_{\rm loc}$ such that
\begin{align*}
\|f\|_{{\rm BMO}_{p}}:=
\left\| M_{p}^{\sharp}(f) \right\|_{L^\infty}<\infty.
\end{align*}
It is well known that,
for any $p\in [1,\infty)$,
${\rm BMO}_{p}={\rm BMO}_{1}=:{\rm BMO}$
(see, for instance, \cite[Corollary 6.12]{Duo01}).
Next, we turn to show Theorem \ref{thmGNS}.

\begin{proof}[Proof of Theorem \ref{thmGNS}]
Let $f\in \dot{W}^{k,Y}$.
Note that we always assume $p_Y>1$ in the present theorem.
From Remark \ref{rem-Boyd}(ii), it follows that
$f\in\bigcap_{1\le p<p_Y}W^{k,p}_{\rm loc}$.

We first prove (i). Assume further that $f\in X$.
For any $t\in [0,1]$, let
\begin{align*}
F(t):=n\left(\frac{1-t}{p_X}+\frac{t}{p_Y}-\frac{1}{q}\right)-tk.
\end{align*}
By the assumption $n(\frac{1}{p_Y}-\frac{1}{q})<k$, we have
$F(1)<0$. From this and the continuity of $F$, we infer that there
exists $s_0\in(0,1)$ such that $F(s_0)<0$. Since this inequality is
strict, we can choose $p_1\in(1,p_X)$ and $p_2\in(1,p_Y)$ satisfying
\begin{align*}
n\left(\frac{1-s_0}{p_1}+\frac{s_0}{p_2}-\frac{1}{q}\right)<s_0k
\quad\text{and}\quad
n\left(\frac{1}{p_2}-\frac{1}{q}\right)<k.
\end{align*}
Applying this and Theorem \ref{thm-ps}(i) with $\widetilde{s_0}:=1$,
we find that, for any $s\in(s_0,1)$ and almost every
$x\in\mathbb{R}^n$,
\begin{align*}
\mathfrak{D}^{s,k}_{q}(f)(x)
\lesssim [(s-s_0)(1-s)]^{-\frac{1}{q}}
\left[M^{\sharp}_{p_1}(f)(x)\right]^{1-s}
\left[M_{p_2}\left(\left|\nabla^k f\right|\right)(x)\right]^s.
\end{align*}
This, together with the definition of $X^{1-s}Y^s$,
$M_{p_1}^{\sharp}(f)\lesssim M_{p_1}(f)$, and the boundedness of $M$ on
both $X^{\frac{1}{p_1}}$ and $Y^{\frac{1}{p_2}}$, further implies that
\begin{align*}
\left\|\mathfrak{D}^{s,k}_{q}(f)\right\|_{X^{1-s}Y^s}
&\lesssim [(s-s_0)(1-s)]^{-\frac{1}{q}}
\left\|M_{p_1}^{\sharp}(f)\right\|_X^{1-s}
\left\|M_{p_2}\left(\left|\nabla^k f\right|\right)\right\|_Y^s \\
&\lesssim [(s-s_0)(1-s)]^{-\frac{1}{q}}
\left\|f\right\|_X^{1-s}
\left\|\nabla^k f\right\|_Y^s.
\end{align*}
Thus, \eqref{eq-gns1} holds.

Next, we show \eqref{eq-gns2}. Assume that $1\le q<p_X$.
Choose $p_1\in(q,p_X)$ and $p_2\in(1,p_Y)$ satisfying that
$n(\frac{1}{p_2}-\frac{1}{q})<k$. Since
$|f(x)|\le M_{p_1}(f)(x)$ for almost every $x\in\mathbb{R}^n$, we have
\begin{align*}
M_{p_1}(f-f(x))(x)
\le M_{p_1}(f)(x)+|f(x)|
\le 2M_{p_1}(f)(x).
\end{align*}
From this and Theorem \ref{thm-ps}(ii), we infer that, for any
$s\in(0,1)$ and almost every $x\in\mathbb{R}^n$,
\begin{align*}
\mathfrak{D}^{s,k}_q(f)(x)
\lesssim \frac{1}{[s(1-s)]^{\frac{1}{q}}}
\left[M_{p_1}(f)(x)\right]^{1-s}
\left[M_{p_2}\left(\left|\nabla^k f\right|\right)(x)\right]^s.
\end{align*}
This, together with the definition of $X^{1-s}Y^s$ and the boundedness
of $M$ on both $X^{\frac{1}{p_1}}$ and $Y^{\frac{1}{p_2}}$, further
implies \eqref{eq-gns2} and hence (i).

Finally, we prove (ii). Assume further that $f\in \operatorname{BMO}$.
Repeating the choice of the parameters in the proof of \eqref{eq-gns1}
with $X=L^\infty$ and $p_X=\infty$, we can choose
$s_0\in(0,1)$, $p_1\in(1,\infty)$, and $p_2\in(1,p_Y)$ such that
\begin{align*}
n\left(\frac{1-s_0}{p_1}+\frac{s_0}{p_2}-\frac{1}{q}\right)<s_0k
\quad\text{and}\quad
n\left(\frac{1}{p_2}-\frac{1}{q}\right)<k.
\end{align*}
Applying Theorem \ref{thm-ps}(i) with $\widetilde{s_0}:=1$, we obtain,
for any $s\in(s_0,1)$ and almost every $x\in\mathbb{R}^n$,
\begin{align*}
\mathfrak{D}^{s,k}_q(f)(x)
\lesssim [(s-s_0)(1-s)]^{-\frac{1}{q}}
\left[M_{p_1}^{\sharp}(f)(x)\right]^{1-s}
\left[M_{p_2}\left(\left|\nabla^k f\right|\right)(x)\right]^s.
\end{align*}
This, combined with Remark \ref{rem-cp}(iii), the boundedness of $M$ on
$Y^{\frac{1}{p_2}}$, and the fact
$\|M_{p_1}^{\sharp}(\cdot)\|_{L^\infty}\sim
\|\cdot\|_{\operatorname{BMO}}$, further implies that
\begin{align*}
\left\|\mathfrak{D}^{s,k}_q(f)\right\|_{Y^{\frac{1}{s}}}
&=\left\|\mathfrak{D}^{s,k}_q(f)\right\|_{(L^\infty)^{1-s}Y^s}
\lesssim [(s-s_0)(1-s)]^{-\frac{1}{q}}
\left\|f\right\|_{\operatorname{BMO}}^{1-s}
\left\|\nabla^k f\right\|_Y^s.
\end{align*}
This shows (ii), which completes the proof of Theorem \ref{thmGNS}.
\end{proof}

Finally, as pointed out in Remark \ref{rem-1.2}(ii),
we prove that estimate \eqref{eq-tss2} cannot hold uniformly in
$s\in(0,1)$ with $M_{p_1}(f-f(x))(x)$ replaced by
$M_{p_1}^{\sharp}(f)(x)$.
\begin{proposition}\label{pro-counter}
Let $k\in \mathbb{N}$ and $p_1,p_2,q\in[1,\infty)$. There is no
constant $C\in(0,\infty)$, depending only on $n$, $k$, $p_1$, $p_2$, and
$q$, such that, for any $s\in(0,1)$ and $f\in W^{k,1}_{\mathrm{loc}}$
and for almost every $x\in\mathbb{R}^n$,
\begin{align}\label{eq-false1}
\mathfrak{D}^{s,k}_q(f)(x)
\leq
\frac{C}{[s(1-s)]^{\frac1q}}
\left[M_{p_1}^{\sharp}(f)(x)\right]^{1-s}
\left[M_{p_2}(|\nabla^k f|)(x)\right]^s.
\end{align}
\end{proposition}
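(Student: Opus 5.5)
The plan is to argue by contradiction, using the behaviour as $s\to0^+$. For small $s$, the left-hand side of \eqref{eq-false1} sees $|f(x)|$ through the large-$|h|$ tail. The right-hand side only sees $M^{\sharp}_{p_1}(f)(x)$, which is the BMO-size of $f$ near $x$. A function with $|f|=1$ near a point but arbitrarily small BMO norm then gives the contradiction.

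\emph{Lower bound for the left-hand side.} Let $f\in C_{\rm c}^\infty$ with $\operatorname{supp} f\subset B({\bf 0},1)$, and let $x\in B({\bf 0},1)$. If $|h|\ge 2$, then $x+jh\notin B({\bf 0},1)$ for every $j\in\{1,\dots,k\}$. Hence, by \eqref{eq-sho-hd}, $\Delta_h^k f(x)=(-1)^k f(x)$, and
\begin{align*}
\mathfrak{D}^{s,k}_q(f)(x)\ge |f(x)|\left[\int_{|h|\ge2}\frac{dh}{|h|^{n+skq}}\right]^{\frac1q}
=|f(x)|\left[\frac{|\mathbb{S}^{n-1}|\,2^{-skq}}{skq}\right]^{\frac1q}.
\end{align*}
Consequently $\liminf_{s\to0^+}s^{1/q}\mathfrak{D}^{s,k}_q(f)(x)\ge (|\mathbb{S}^{n-1}|/(kq))^{1/q}|f(x)|$.

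\emph{Upper bound for the right-hand side.} Suppose \eqref{eq-false1} held with some constant $C$, and fix a point $x$ where it holds and where $A:=M_{p_2}(|\nabla^k f|)(x)\in(0,\infty)$. This quantity is finite because $f\in C_{\rm c}^\infty$. Multiplying \eqref{eq-false1} by $s^{1/q}$ and letting $s\to0^+$ gives
\begin{align*}
\limsup_{s\to0^+}s^{\frac1q}\mathfrak{D}^{s,k}_q(f)(x)\le C\, M^{\sharp}_{p_1}(f)(x)\le C\,\|f\|_{{\rm BMO}_{p_1}}.
\end{align*}
Since $\eqref{eq-false1}$ holds almost everywhere, we may choose such an $x$ in any set of positive measure. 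Combining the two bounds yields $|f(x)|\lesssim C\|f\|_{\rm BMO}$ for almost every $x\in B({\bf 0},1)$, with the implicit constant depending only on $n,k,q,p_1$.

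\emph{Construction of the test function (main step).} Fix a smooth $\eta:\mathbb{R}\to[0,1]$ with $\eta(t)=0$ for $t\le0$ and $\eta(t)=1$ for $t\ge1$. For $\varepsilon\in(0,1)$, set
\begin{align*}
f_\varepsilon(y):=\eta\left(\varepsilon\log\frac{1}{|y|}\right),
\end{align*}
with $f_\varepsilon({\bf 0}):=1$. Then $f_\varepsilon\in C_{\rm c}^\infty$, $\operatorname{supp} f_\varepsilon\subset \overline{B({\bf 0},1)}$, and $f_\varepsilon\equiv1$ on $B({\bf 0},e^{-1/\varepsilon})$, which has positive measure.

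The key estimate is $\|f_\varepsilon\|_{\rm BMO}\lesssim\varepsilon$. This holds because $\log|y|\in{\rm BMO}$, and composing with the Lipschitz function $t\mapsto\eta(\varepsilon t)$, whose Lipschitz constant is $\varepsilon\|\eta'\|_{L^\infty}$, multiplies the BMO norm by at most that constant. For this one can use the pointwise bound $|g(y)-g(z)|\le L|u(y)-u(z)|$ inside the oscillation, with constants $c=g(\text{ball median of }u)$. Together with ${\rm BMO}_{p_1}={\rm BMO}$, this gives $M^{\sharp}_{p_1}(f_\varepsilon)\lesssim\varepsilon$ everywhere.

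Applying the previous step to $f_\varepsilon$ at almost every $x\in B({\bf 0},e^{-1/\varepsilon})$ yields $1\lesssim C\varepsilon$. Letting $\varepsilon\to0^+$ gives a contradiction, which proves Proposition \ref{pro-counter}. The only delicate point is the BMO bound under Lipschitz composition. It is standard, and I would verify it directly on cubes, splitting into cubes of side larger and smaller than a fixed multiple of their distance to the origin.
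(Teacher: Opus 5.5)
Your argument is correct, but it takes a different route from the paper.

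\textbf{The paper's route.} The paper uses the single unbounded function $g(x)=\log(1+|x|^2)$. It lies in ${\rm BMO}$ and has $\nabla^k g\in L^\infty$, so the right-hand side of \eqref{eq-false1} is $\lesssim[s(1-s)]^{-1/q}$. For $x\in B({\bf 0},1)$, the logarithmic growth of $g$ gives $|\Delta^k_{r\omega}g(x)|\gtrsim\log r$ for large $r$. Integrating over $r\in[e^{1/(skq)},e^{2/(skq)}]$ then yields $\mathfrak{D}^{s,k}_q(g)(x)\gtrsim s^{-1-1/q}$. This beats $s^{-1/q}$ at a single sufficiently small $s$.

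\textbf{Your route.} You keep the left-hand side at the allowed order $s^{-1/q}$ by using compactly supported functions, for which the tail produces exactly $|f(x)|$ in the Maz'ya--Shaposhnikova limit. The contradiction then comes from letting $\|f_\varepsilon\|_{\rm BMO}\to0$ while $f_\varepsilon=1$ near the origin.

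\textbf{Comparison.}
\begin{itemize}
\item Your approach is more conceptual. It shows that \eqref{eq-false1} would force $|f(x)|\lesssim M^\sharp_{p_1}(f)(x)$, which is exactly why $M_{p_1}(f-f(x))(x)$ cannot be replaced by a sharp maximal function (compare Theorem \ref{thmMS}).
\item The price is two extra ingredients: $\log|\cdot|\in{\rm BMO}$, and stability of ${\rm BMO}$ under Lipschitz composition. The latter is immediate by taking $c=G(c_u)$ in the oscillation, with $c_u$ a near-optimal constant for $u$; no median or cube splitting is needed. You also need a double limit: first $s\to0^+$ at fixed $\varepsilon$, then $\varepsilon\to0^+$.
\item The paper's argument is quantitatively stronger. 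It rules out any constant of size $o(s^{-1-1/q})$ as $s\to0^+$. Yours, as written, rules out only constants that are $O(s^{-1/q})$, which is all the statement requires.
\end{itemize}

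\textbf{One point to tighten.} The exceptional null set in \eqref{eq-false1} depends on $s$. Before letting $s\to0^+$ at a fixed $x$, restrict to a sequence $s_m\to0^+$ and discard the countable union of the corresponding null sets. Your lower bound holds for every $x\in B({\bf 0},1)$ and every $s$, so nothing else changes. Alternatively, for each fixed $\varepsilon$ pick a single $s=s(\varepsilon)$ small enough that $(1-s)^{-1/q}$, $2^{-sk}$, $(C'\varepsilon)^{-s}$ and $\|\nabla^k f_\varepsilon\|_{L^\infty}^{s}$ are all within a factor $2$ of their limits. This avoids pointwise limits altogether.
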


\begin{proof}
For any $x\in\mathbb{R}^n$, let
$g(x):=\log(1+|x|^2)$. Then $g\in C^\infty\cap\operatorname{BMO}$ and
$g\in W^{k,1}_{\mathrm{loc}}$. Moreover, $|\nabla^k g|\in L^\infty$.
Thus, for any $x\in\mathbb{R}^n$,
\begin{align}\label{eq-counter-max}
M_{p_1}^{\sharp}(g)(x)\lesssim 1
\quad\text{and}\quad
M_{p_2}(|\nabla^k g|)(x)\lesssim 1.
\end{align}

We next estimate $\mathfrak{D}^{s,k}_q(g)$. Fix
$x\in B({\bf0},1)$ and $\omega\in\mathbb{S}^{n-1}$. For any
$r\geq2$ and $j\in\{1,\ldots,k\}$, we have
\begin{align*}
\left|g(x+jr\omega)-2\log r\right|
&=
\left|
\log\left(r^{-2}+\left|j\omega+\frac{x}{r}\right|^2\right)
\right|
\lesssim 1.
\end{align*}
From this and the boundedness of $g$ on $B({\bf0},1)$, it follows that
\begin{align*}
\left|
\Delta_{r\omega}^k g(x)
-2\log r\sum_{j=1}^k(-1)^{k-j}\binom{k}{j}
\right|
\lesssim 1.
\end{align*}
Noting that $\sum_{j=1}^k(-1)^{k-j}\binom{k}{j}=(-1)^{k+1}\ne0$,
we conclude that there exists $R\in[2,\infty)$,
depending only on $k$, such that, for any
$r\ge R$,
$|\Delta^k_{r\omega}g(x)|\gtrsim \log r.$
Using this and polar coordinates, we find that, for any
$s\in(0,1)$ such that $e^{\frac1{skq}}>R$
and for any $x\in B({\bf0},1)$,
\begin{align}\label{eq-counter-D-lower}
\left[\mathfrak{D}^{s,k}_q(g)(x)\right]^q
&\geq
\int_{\mathbb{S}^{n-1}}\int_R^\infty
\frac{|\Delta^k_{r\omega}g(x)|^q}{r^{1+skq}}\,dr\,d\mathcal{H}^{n-1}(\omega)
\notag\\
&\gtrsim
\int_R^\infty\frac{(\log r)^q}{r^{1+skq}}\,dr
\geq
\int_{e^{\frac1{skq}}}^{e^{\frac2{skq}}}
\frac{(\log r)^q}{r^{1+skq}}\,dr \notag\\
&\gtrsim
s^{-q}
\int_{e^{\frac1{skq}}}^{e^{\frac2{skq}}}\frac{dr}{r^{1+skq}}
\gtrsim s^{-q-1}.
\end{align}
On the other hand, from \eqref{eq-false1} and
\eqref{eq-counter-max}, we deduce that, for any $s\in(0,1)$ and almost every
$x\in B({\bf0},1)$,
$\mathfrak{D}^{s,k}_q(g)(x)
\lesssim [s(1-s)]^{-\frac1q}.$
Combining this and \eqref{eq-counter-D-lower}, we
obtain, for all sufficiently small $s\in(0,1)$,
$s^{-1-\frac1q}\lesssim s^{-\frac1q},$
which is a contradiction as $s\to0^+$. This completes the proof of
Proposition \ref{pro-counter}.
\end{proof}

\begin{remark}\label{rem-HLYY-assumption}
We give the detailed comparison between the assumptions used in the
present article and those in \cite[Theorems 4.1 and 5.5]{HLYY-ineq}.
Consider the following two assumptions:
\begin{enumerate}[{\rm(a)}]
\item $p_X\in(1,\infty)$ and
$n(\frac1{p_X}-\frac1q)<k$;

\item there exists some $p_0\in(1,\infty)$ such that
$n(\frac1{p_0}-\frac1q)<k$, $X^{\frac1{p_0}}$ is a {\rm BBF} space,
and the Hardy--Littlewood maximal operator $M$ is bounded on
$(X^{\frac1{p_0}})'$.
\end{enumerate}
Assumption {\rm(b)} is used in the corresponding non-endpoint results
of \cite[Theorems 4.1 and 5.5]{HLYY-ineq}. We show that {\rm(b)}
implies {\rm(a)}.

Indeed, assume that {\rm(b)} holds. By
\cite[Lemma 3.10]{HLYY-ineq}, for any measurable function $h$,
\begin{align}\label{eq-rubio-representation}
\|h\|_X
\sim
\sup_{\|g\|_{(X^{1/p_0})'}\le1}
\|h\|_{L^{p_0}_{R_g}},
\end{align}
where $R_g\in A_1$ and $[R_g]_{A_1}$ is bounded uniformly in $g$ by
a positive constant depending only on
$\|M\|_{(X^{1/p_0})'\to(X^{1/p_0})'}$.

Let $p\in(1,p_0)$. From \eqref{eq-rubio-representation} and the
definition of the convexification, it follows that
\begin{align}\label{eq-rubio-convexification}
\|h\|_{X^{\frac1p}}
&=
\big\||h|^{\frac1p}\big\|_X^p
\sim
\sup_{\|g\|_{(X^{1/p_0})'}\le1}
\big\||h|^{\frac1p}\big\|_{L^{p_0}_{R_g}}^p =
\sup_{\|g\|_{(X^{1/p_0})'}\le1}
\|h\|_{L^{\frac{p_0}{p}}_{R_g}}.
\end{align}
Since $\frac{p_0}{p}>1$, the right-hand side of
\eqref{eq-rubio-convexification} is a supremum of Banach function
norms. Hence, $X^{\frac1p}$ is a {\rm BBF} space, up to an equivalent
norm.

Moreover, since $R_g\in A_1\subset A_{\frac{p_0}{p}}$ uniformly in
$g$, the boundedness of $M$ on weighted Lebesgue spaces and
\eqref{eq-rubio-convexification} imply that
\begin{align*}
\|M(h)\|_{X^{\frac1p}}
&\lesssim
\sup_{\|g\|_{(X^{1/p_0})'}\le1}
\|M(h)\|_{L^{\frac{p_0}{p}}_{R_g}}\lesssim
\sup_{\|g\|_{(X^{1/p_0})'}\le1}
\|h\|_{L^{\frac{p_0}{p}}_{R_g}}
\lesssim
\|h\|_{X^{\frac1p}}.
\end{align*}
Therefore, $p\le p_X$ for every $p\in(1,p_0)$. By the arbitrariness
of $p$, we obtain $p_0\le p_X$, and hence
$n(\frac1{p_X}-\frac1q)
\le n(\frac1{p_0}-\frac1q)<k$.
Thus, assumption {\rm(b)} implies assumption {\rm(a)}.

Consequently, in the global non-endpoint diagonal setting, both
Theorem \ref{thmGNS} and Theorem \ref{thmBBM} require a weaker assumption than the corresponding one in
\cite[Theorems 4.1 and 5.5]{HLYY-ineq}.
\end{remark}

\section{Proofs of Theorems \ref{thmBBM} and \ref{thmMS}}\label{sec-asy}

In this section, we give the proofs of Theorems \ref{thmBBM} and \ref{thmMS}, respectively, in Subsection \ref{secBBM} and Subsection \ref{secMS}.

\subsection{Proof of Theorem \ref{thmBBM}}\label{secBBM}

In this subsection, we prove Theorem \ref{thmBBM}. To this end, we first recall the notion of absolutely continuous norm of a {\rm BBF} space.

\begin{definition}\label{def-ac}
A  {\rm BBF} space $X$ is said to have an \emph{absolutely
continuous norm} if, for any $f\in X$ and any sequence
$\{E_j\}_{j\in\mathbb{N}}$ of measurable sets in $\mathbb{R}^n$
satisfying that ${\bf 1}_{E_j}\to0$ almost everywhere as $j\to\infty$,
one has $\|f{\bf 1}_{E_j}\|_X\to0$ as $j\to\infty$ (see, for instance,
\cite[Definition 3.2]{WYY20} or \cite[Definition 3.1]{BS88}).
\end{definition}

As in the proof of Theorem \ref{thm-ps}, we establish this BBM formula
by splitting the domain of integration in the definition of
$\mathfrak{D}^{s,k}_q(f)$ into inner and outer parts. For any
$s\in(0,1)$, $k\in\mathbb{N}$, $q\in[1,\infty)$, $f\in L^0$, and
$x\in\mathbb{R}^n$, define
\begin{align*}
\mathfrak{D}^{s,k}_{q,{\rm in}}(f)(x)&:=
\left[
\int_{|h|\le1}\frac{|\Delta_h^k f(x)|^q}{|h|^{n+skq}}\,dh
\right]^{\frac1q}\quad\text{and}\quad
\mathfrak{D}^{s,k}_{q,{\rm out}}(f)(x):=
\left[
\int_{|h|\ge1}\frac{|\Delta_h^k f(x)|^q}{|h|^{n+skq}}\,dh
\right]^{\frac1q}.
\end{align*}

First, we deal with the inner part as follows.

\begin{proposition}\label{proIN}
Let all the notation and assumptions be the same as in Theorem \ref{thmBBM}.
Then, for any $f\in X\cap W^{k,Y}$,
\begin{align}\label{eq-in1}
\lim_{s\to1^-}(1-s)^{\frac1q}
\left\|\mathfrak{D}^{s,k}_{q,{\rm in}}(f)\right\|_{X^{1-s}Y^s}
=(kq)^{-\frac1q}\left\|\mathbb{D}^k_q(f)\right\|_Y.
\end{align}
\end{proposition}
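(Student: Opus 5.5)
The plan is to prove matching upper and lower bounds for the limit in \eqref{eq-in1}, using one pointwise identity: for $f\in W^{k,Y}$ and almost every $x$,
\begin{align*}
\lim_{s\to1^-}(1-s)^{\frac1q}\mathfrak{D}^{s,k}_{q,{\rm in}}(f)(x)=(kq)^{-\frac1q}\mathbb{D}^k_q(f)(x).
\end{align*}
For smooth $f$ this comes from Taylor's formula, $\Delta_h^kf(x)=\sum_{|\alpha|=k}\frac{k!}{\alpha!}\partial^\alpha f(x)h^\alpha+O(|h|^{k+1})$. Passing to polar coordinates $h=\rho\xi$ and using $\int_0^1\rho^{kq-1-skq+kq\cdot 0}\,d\rho$-type integrals, namely $(1-s)\int_0^1\rho^{kq(1-s)-1}\,d\rho=(kq)^{-1}$, gives the constant $(kq)^{-1}$ exactly. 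The error term contributes $O(1-s)$ and vanishes in the limit.

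\textbf{Step 1 (upper bound on a dense class, then all of $W^{k,Y}$).} For any $f$, I will use Proposition \ref{proPoin}(ii) with $\widetilde{s_0}=1$, $r=1$ and $p_1,p_2$ chosen as in the proof of Theorem \ref{thmGNS} (so $p_2<p_Y$ with $n(\frac1{p_2}-\frac1q)<k$). This gives
\begin{align*}
(1-s)^{\frac1q}\mathfrak{D}^{s,k}_{q,{\rm in}}(f)\lesssim M_{p_2}(|\nabla^kf|).
\end{align*}
The bound is uniform in $s$, and $M_{p_2}$ is bounded on $Y$. By the lattice property of $X^{1-s}Y^s$ together with \eqref{eq-UP}, I obtain
\begin{align*}
\|g\|_{X^{1-s}Y^s}\le\|g\|_X^{1-s}\|g\|_Y^s .
\end{align*}
This is problematic, because $M_{p_2}(|\nabla^k f|)$ need not lie in $X$. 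I would instead bound the quantity directly in $Y$. The key observation is that $\mathfrak{D}_{q,{\rm in}}$ is controlled by the derivative and also by $f$ itself: the trivial bound $(1-s)^{1/q}\mathfrak D_{q,\rm in}\lesssim (1-s)^{1/q}(1-s)^{-1/q}\cdots$ fails, so a different route is needed.

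The route I will take is to identify the limit space, as the paper announces in Lemma \ref{lem-equa}: for $g\in X\cap Y$ one has $\|g\|_{X^{1-s}Y^s}\to\|g\|_Y$ as $s\to1^-$. This uses absolute continuity of the norms and $p_X,p_Y<\infty$. To get it, I will approximate the optimal splitting $|g|\le|g_1|^{1-s}|g_2|^s$ by truncations. The lower semicontinuity direction follows from the Fatou property.

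\textbf{Step 2 (reduction by density).} Using absolute continuity of the norms, I will approximate $f$ in $X\cap W^{k,Y}$ by $C_c^\infty$ functions $f_m$ via mollification and cutoff. The maximal bound above gives
\begin{align*}
(1-s)^{\frac1q}\|\mathfrak D_{q,{\rm in}}(f-f_m)\|_{X^{1-s}Y^s}\lesssim\|M_{p_2}(|\nabla^k(f-f_m)|)\|_{X^{1-s}Y^s}.
\end{align*}
This has to be handled carefully, using $\mathfrak D_{q,\rm in}$ localized to $|h|\le1$: both $(1-s)^{1/q}\mathfrak D_{q,\rm in}(f-f_m)\lesssim M_{p_2}(\nabla^k(f-f_m))$ and a crude bound by sums of $M_{p_1}(f-f_m)$-type functions are available. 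Combining them via the pointwise interpolation of Theorem \ref{thm-ps}-type splitting gives a bound
\begin{align*}
\lesssim\|f-f_m\|_X^{1-s}\|\nabla^k(f-f_m)\|_Y^s,
\end{align*}
uniformly as $s\to1^-$. Hence it suffices to treat smooth compactly supported $f$.

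\textbf{Step 3 (smooth case).} For $f\in C_c^\infty$, the functions $(1-s)^{1/q}\mathfrak D_{q,\rm in}(f)$ are supported in a fixed compact neighbourhood of ${\rm supp}f$, are uniformly bounded, and converge pointwise to $(kq)^{-1/q}\mathbb D^k_q(f)$. Dominated convergence in $X$ and $Y$ (via absolute continuity) combined with the limit-space identification of Step 1 gives \eqref{eq-in1}.

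The main obstacle is the uniform-in-$s$ control of the error $f-f_m$ in the varying space $X^{1-s}Y^s$, together with the convergence $\|g_s\|_{X^{1-s}Y^s}\to\|g\|_Y$ when $g_s$ also varies with $s$. I would first try to handle this by writing $g_s\le G$ with a fixed $G\in X\cap Y$, and then applying the norm continuity of $\|g_s-g\|_X^{1-s}\|g_s-g\|_Y^s\to0$.
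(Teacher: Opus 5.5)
Your skeleton is the paper's. You use one $C_{\rm c}^\infty$ approximant of $f$ in both $X$ and $W^{k,Y}$, and a bound for the error that is uniform as $s\to1^-$. The smooth case goes through the Taylor limit, domination by $C\mathbf{1}_K$, dominated convergence in $Y$, and $\|g_s-G\|_{X^{1-s}Y^s}\le\|g_s-G\|_X^{1-s}\|g_s-G\|_Y^s$. The last ingredient is the identification $\|G\|_{X^{1-s}Y^s}\to\|G\|_Y$ for $G\in X\cap Y$.

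\textbf{The gap: the lower bound in the limit-space identification.} This is the only step specific to the varying target space. Its upper half is just \eqref{eq-UP}; no truncated splittings, absolute continuity, or $p_X,p_Y<\infty$ are involved, and Lemma \ref{lem-equa} holds for arbitrary BBF spaces. The lower half,
\[
\liminf_{s\to1^-}\|G\|_{X^{1-s}Y^s}\ge\|G\|_Y,
\]
does not follow from the Fatou property alone. A near-optimal splitting $|G|\le|u_s|^{1-s}|v_s|^s$ changes with $s$. Unless you show that $|u_s|^{1-s}$ cannot stay large on a substantial set, you get no pointwise lower bound for $v_s$, so Fatou has nothing to act on.

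The paper closes this by Lozanovski\u{\i} duality $(X^{1-s}Y^s)'=(X')^{1-s}(Y')^s$. It tests against $w\in X'\cap Y'$, uses $\|w\|_{(X')^{1-s}(Y')^s}\le\|w\|_{X'}^{1-s}\|w\|_{Y'}^s$, and finishes with the Lorentz--Luxemburg lemma for $Y$ and the truncations $\min\{|w|,N\}\mathbf{1}_{B(\mathbf{0},R)}\in X'\cap Y'$.

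Your Fatou route can be completed, and would even be more elementary, but it needs the missing observation. Normalize $\|u_s\|_X=1$. Then Definition \ref{def-X}(v) gives $\int_B|u_s|\le C_{(B)}$, so
\[
\left|\left\{x\in B:\ |u_s(x)|^{\frac{1-s}{s}}>1+\eta\right\}\right|\le C_{(B)}(1+\eta)^{-\frac{s}{1-s}}\to0 .
\]
Take a sequence $s_j\to1^-$ realizing the $\liminf$ and thin it so that $1-s_j\le1/j$. Borel--Cantelli, over countably many balls and $\eta=1/\ell$, then gives $\limsup_j|u_{s_j}|^{(1-s_j)/s_j}\le1$ a.e. Hence $\liminf_j|v_{s_j}|\ge|G|$ a.e. Only now does Fatou give $\|G\|_Y\le\liminf_j\|v_{s_j}\|_Y$, and $\|v_{s_j}\|_Y^{s_j}\le\|G\|_{X^{1-s_j}Y^{s_j}}+o(1)$.

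\textbf{The error term.} This must come from the sharp-maximal estimate, Theorem \ref{thm-ps}(i) with $\widetilde{s_0}=1$. That is, apply \eqref{eq-gns1} to $f-f_m$ together with $\mathfrak{D}^{s,k}_{q,{\rm in}}\le\mathfrak{D}^{s,k}_q$; after multiplying by $(1-s)^{1/q}$ this leaves the bounded factor $(s-s_0)^{-1/q}$. Your ``crude bound by $M_{p_1}(f-f_m)$'' exists only for the part $|h|\ge r$ (Proposition \ref{proPoin}(iii)). Turning it into an $X$-norm needs $p_1\ge q$ and $M$ bounded on $X^{1/p_1}$, i.e.\ $q<p_X$, which Theorem \ref{thmBBM} does not assume.

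\textbf{A small omission.} To move the right-hand side from $f_m$ to $f$, state $|\mathbb{D}^k_q(f_m)-\mathbb{D}^k_q(f)|\le\mathbb{D}^k_q(f_m-f)\lesssim|\nabla^k(f_m-f)|$ by \eqref{eq-ssim}.
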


To show Proposition \ref{proIN}, we need two lemmas.

\begin{lemma}\label{lem-equa}
Let $X$ and $Y$ be {\rm BBF} spaces. For any $f\in X\cap Y$,
$\lim_{s\to1^-}\|f\|_{X^{1-s}Y^s}=\|f\|_Y.$
\end{lemma}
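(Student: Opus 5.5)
We have to show two inequalities for $f\in X\cap Y$: $\limsup_{s\to1^-}\|f\|_{X^{1-s}Y^s}\le\|f\|_Y$ and $\liminf_{s\to1^-}\|f\|_{X^{1-s}Y^s}\ge\|f\|_Y$.

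\emph{Upper bound.} This follows directly from \eqref{eq-UP} in Remark \ref{rem-cp}(ii): $\|f\|_{X^{1-s}Y^s}\le\|f\|_X^{1-s}\|f\|_Y^s$. If $\|f\|_X\in(0,\infty)$, then $\|f\|_X^{1-s}\to1$ and $\|f\|_Y^s\to\|f\|_Y$ as $s\to1^-$. If $\|f\|_X=0$, then $f=0$ almost everywhere and there is nothing to prove.

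\emph{Lower bound, reduction.} Fix $s$ and take a near-optimal factorization $|f|\le|g|^{1-s}|h|^s$ with $g\in X$, $h\in Y$, normalized by homogeneity so that $\|g\|_X=\|f\|_X$ and $\|g\|_X^{1-s}\|h\|_Y^s\le\|f\|_{X^{1-s}Y^s}+\delta$. We must show $\|f\|_Y\lesssim$ this quantity up to errors vanishing as $s\to1^-$. A naive Young inequality $|g|^{1-s}|h|^s\le(1-s)|g|+s|h|$ puts $g$ into $Y$, which is not controlled. So we truncate using $f$ itself. For $\lambda>1$, split $\mathbb{R}^n$ into
\[
A:=\{|g|\le\lambda|f|\}\quad\text{and}\quad A^c=\{|g|>\lambda|f|\}.
\]
On $A$ we have $|f|\le\lambda^{1-s}|f|^{1-s}|h|^s$, hence $|f|^s\le\lambda^{1-s}|h|^s$ wherever $f\neq0$, that is, $|f|{\bf 1}_A\le\lambda^{(1-s)/s}|h|$. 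The lattice property then gives $\|f{\bf 1}_A\|_Y\le\lambda^{(1-s)/s}\|h\|_Y$.

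\emph{Lower bound, the set $A^c$.} This is the main obstacle. On $A^c$, $|f|<|g|/\lambda$, so $\|f{\bf 1}_{A^c}\|_X\le\|g\|_X/\lambda$, but we need smallness in $Y$, not in $X$. I would try exploiting that $h$ must also be large there: $|f|\le|g|^{1-s}|h|^s$ with $|g|$ large relative to $|f|$ gives only $|h|\ge|f|(|f|/|g|)^{(1-s)/s}$, which is weak. A more robust route is to choose the truncation by size of $f$ and use the Fatou property (iii). Let $E_N:=\{N^{-1}\le|f|\le N\}\cap B({\bf 0},N)$. By Definition \ref{def-X}(iii), $\|f{\bf 1}_{E_N}\|_Y\uparrow\|f\|_Y$, so it suffices to bound $\liminf_{s\to1^-}\|f{\bf 1}_{E_N}\|_{X^{1-s}Y^s}$ from below by $\|f{\bf 1}_{E_N}\|_Y$; monotonicity gives $\|f{\bf 1}_{E_N}\|_{X^{1-s}Y^s}\le\|f\|_{X^{1-s}Y^s}$. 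On $E_N$, replace $g$ by $\min\{|g|,\lambda N\}$; this still dominates $f$ together with $h$ whenever $|g|>\lambda N\ge\lambda|f|$. With $|g|\le\lambda N$ on $E_N$, we get $|f|\le(\lambda N)^{1-s}|h|^s$, so $|f|{\bf 1}_{E_N}\le N^{(1-s)/s}(\lambda N)^{(1-s)/s}|h|$, using $|f|^{1-s}\ge N^{-(1-s)}$. This handles both pieces simultaneously. It yields
\[
\|f{\bf 1}_{E_N}\|_Y\le(\lambda N^2)^{\frac{1-s}{s}}\|h\|_Y,
\]
and $\|h\|_Y\le[(\|f\|_{X^{1-s}Y^s}+\delta)/\|f\|_X^{1-s}]^{1/s}$. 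Letting $s\to1^-$ (with $N,\lambda$ fixed) gives $\|f{\bf 1}_{E_N}\|_Y\le\liminf_{s\to1^-}\|f\|_{X^{1-s}Y^s}+\delta$. Then let $\delta\to0$ and $N\to\infty$. I expect the delicate point to be checking that the truncation $\min\{|g|,\lambda N\}$ keeps an admissible factorization on $E_N$, and that the normalization of $g$ is legitimate when $\|f\|_X$ is compared with $\|g\|_X$. If needed, I would normalize $\|g\|_X=1$ and track the factor $\|g\|_X^{(1-s)/s}$ instead.
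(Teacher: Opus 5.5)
Your upper bound is correct and is exactly the paper's argument via \eqref{eq-UP}. The lower bound, however, has a genuine gap at the point you flagged yourself. The claim that on $E_N$ the pair $(\min\{|g|,\lambda N\},h)$ ``still dominates'' $f$ is false. Where $|g|>\lambda N$ you only know $|h|^s\ge|f|\,|g|^{-(1-s)}$, which is \emph{weaker} than the needed $|h|^s\ge|f|(\lambda N)^{-(1-s)}$. Lowering $g$ can only break $|f|\le|g|^{1-s}|h|^s$: at a point with $|g|=M>\lambda N$ and $|f|=M^{1-s}|h|^s$, the truncated product equals $(\lambda N/M)^{1-s}|f|<|f|$. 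So your bound $|f|{\bf 1}_{E_N}\le(\lambda N^2)^{\frac{1-s}{s}}|h|$ holds only on $E_N\cap\{|g|\le\lambda N\}$. The remaining set $S_s:=E_N\cap\{|g|>\lambda N\}$ is your set $A^c$ again, and there $h$ can be arbitrarily small. All you know about $S_s$ is that its measure is small: $|S_s|\le C_{(B({\bf 0},N))}\|g\|_X/(\lambda N)$ by Definition \ref{def-X}(v), uniformly in $s$ under your normalization. For a general BBF space, small measure does not make $\|f{\bf 1}_{S_s}\|_Y$ small (take $Y=L^\infty$ or a Morrey space). So nothing in your argument controls this piece, and the lemma assumes no absolute continuity.

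The missing idea is duality, which is how the paper proceeds. For any $w\in X'\cap Y'$ and any factorization $|f|\le|g|^{1-s}|h|^s$, H\"older's inequality with exponents $\frac1{1-s}$ and $\frac1s$ gives
\[
\int_{\mathbb{R}^n}|fw|\le\Big(\int_{\mathbb{R}^n}|gw|\Big)^{1-s}\Big(\int_{\mathbb{R}^n}|hw|\Big)^{s}\le\|g\|_X^{1-s}\|h\|_Y^s\|w\|_{X'}^{1-s}\|w\|_{Y'}^s .
\]
Hence $\|f\|_{X^{1-s}Y^s}\ge\int|fw|/(\|w\|_{X'}^{1-s}\|w\|_{Y'}^s)$. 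The paper phrases this via Lozanovski\u{\i}'s duality theorem, but only this elementary direction is used. Now let $s\to1^-$ and take the supremum over $w$; the truncations $\min\{|w|,N\}{\bf 1}_{B({\bf 0},R)}$ of $w\in Y'$ lie in $X'$ by Definition \ref{def-X}(v). The Lorentz--Luxemburg theorem, which rests on the Fatou property of $Y$, then gives $\|f\|_Y$.

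Your splitting can be rescued, but only with the same tool. Choose $w\in Y'$ with $\|w\|_{Y'}\le1$ nearly norming $f{\bf 1}_{E_N}$. Then $\int_{S_s}|fw|\le N\int_{S_s}|w|$, which is small uniformly in $s$ as $\lambda\to\infty$ because $w\in L^1(B({\bf 0},N))$. At that point, though, the direct H\"older argument above is simpler.
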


\begin{proof}
Let $f\in X\cap Y$. By \eqref{eq-UP}, we find that
\begin{align}\label{eq-UP1}
\limsup_{s\to1^-}\|f\|_{X^{1-s}Y^s}\le\|f\|_Y.
\end{align}
Next, we prove the converse inequality
\begin{align}\label{eq-LP}
\liminf_{s\to1^-}\|f\|_{X^{1-s}Y^s}\ge\|f\|_Y.
\end{align}
For this purpose, we need the concept of the \emph{associate space} (also
called the \emph{K\"othe dual}) of a {\rm BBF} space $X$, which is defined
to be the set of all $f\in L^0$ such that
\begin{align*}
\|f\|_{X'}:=
\sup_{\genfrac{}{}{0pt}{}{g\in X}{\|g\|_X=1}}\|fg\|_{L^1}<\infty
\end{align*}
(see, for instance, \cite[Chapter 1, Section 2]{BS88} or
\cite[p.\,9]{SHYY17}). From the Lozanovski\u{\i} duality theorem (see, for
instance, \cite[Theorem 7.2]{CNS03}), we infer that
$(X^{1-s}Y^s)'=(X')^{1-s}(Y')^s$ with the same norm.
Consequently, the norm in $X^{1-s}Y^s$ admits the following dual
representation:
\begin{align*}
\|f\|_{X^{1-s}Y^s}
=
\sup\left\{
\int_{\mathbb{R}^n}|f(x)w(x)|\,dx:
\|w\|_{(X')^{1-s}(Y')^s}\le1
\right\}.
\end{align*}
Note that,
for any fixed $w\in X'\cap Y'$ with $w\not\equiv0$,
by Remark \ref{rem-cp}(ii) we have
$\|w\|_{(X')^{1-s}(Y')^s}\le \|w\|_{X'}^{1-s}\|w\|_{Y'}^s$. Thus, for any
$s\in(0,1)$,
\begin{align*}
\|f\|_{X^{1-s}Y^s}
\ge
\frac{\int_{\mathbb{R}^n}|f(x)w(x)|\,dx}
{\|w\|_{(X')^{1-s}(Y')^s}}
\ge
\frac{\int_{\mathbb{R}^n}|f(x)w(x)|\,dx}
{\|w\|_{X'}^{1-s}\|w\|_{Y'}^s},
\end{align*}
which implies
\begin{align*}
\liminf_{s\to1^-}\|f\|_{X^{1-s}Y^s}
\ge
\frac{\int_{\mathbb{R}^n}|f(x)w(x)|\,dx}{\|w\|_{Y'}}.
\end{align*}
Taking the supremum over such $w$, we obtain
\begin{align}\label{up}
\liminf_{s\to1^-}\|f\|_{X^{1-s}Y^s}
\ge
\sup_{\genfrac{}{}{0pt}{}{w\in X'\cap Y'}{w\ne0}}
\frac{\int_{\mathbb{R}^n}|f(x)w(x)|\,dx}{\|w\|_{Y'}}.
\end{align}
Using the Lorentz--Luxemburg lemma for {\rm BBF} spaces (see, for
instance, \cite[Lemma 2.6]{ZYYW21}), we find that
\begin{align*}
\sup_{\genfrac{}{}{0pt}{}{w\in Y'}{w\ne0}}
\frac{\int_{\mathbb{R}^n}|f(x)w(x)|\,dx}{\|w\|_{Y'}}
=\|f\|_Y.
\end{align*}
We next show that the same supremum can be taken over $X'\cap Y'$. For
any $w\in Y'$ and $N,R\in\mathbb{N}$, let
$w_{N,R}:=\min\{|w|,N\}{\bf 1}_{B({\bf 0},R)}$. Then
$w_{N,R}\in Y'$. Moreover, since $w_{N,R}$ is bounded and has bounded
support, from the definition of {\rm BBF}
spaces, we deduce that $w_{N,R}\in X'$. Thus, $w_{N,R}\in X'\cap Y'$. Letting
first $N\to\infty$ and then $R\to\infty$, and using the monotone
convergence theorem, we conclude that
\begin{align*}
\sup_{\genfrac{}{}{0pt}{}{w\in X'\cap Y'}{w\ne0}}
\frac{\int_{\mathbb{R}^n}|f(x)w(x)|\,dx}{\|w\|_{Y'}}
=\|f\|_Y.
\end{align*}
This, together with \eqref{up}, further implies \eqref{eq-LP}.
Combining this and \eqref{eq-UP1}, we conclude that
\begin{align*}
\lim_{s\to1^-}\|f\|_{X^{1-s}Y^s}=\|f\|_Y,
\end{align*}
which completes the proof of Lemma \ref{lem-equa}.
\end{proof}

\begin{lemma}\label{lem-con}
Let $k\in\mathbb{N}$, $q\in[1,\infty)$, and $X,Y$ be {\rm BBF} spaces.
Assume that $Y$ has an absolutely continuous norm. Then, for any
$f\in C_{\rm c}^\infty$,
\begin{align}\label{eq-ccw}
\lim_{s\to1^-}(1-s)^{\frac1q}
\left\|\mathfrak{D}^{s,k}_{q,{\rm in}}(f)\right\|_{X^{1-s}Y^s}
=(kq)^{-\frac1q}\left\|\mathbb{D}^k_q(f)\right\|_Y.
\end{align}
\end{lemma}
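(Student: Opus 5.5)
The plan is to prove a pointwise asymptotic with a uniform majorant, and then pass to the limit in the $s$-dependent norm using Lemma \ref{lem-equa} together with absolute continuity of $Y$. Fix $f\in C_{\rm c}^\infty$ with $\operatorname{supp} f\subset B(\mathbf 0,R_0)$.

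\emph{Pointwise limit.} By Taylor's formula, $\Delta_h^k f(x)=\sum_{|\alpha|=k}\frac{k!}{\alpha!}\partial^\alpha f(x)h^\alpha+O(|h|^{k+1})$ with a constant depending only on $f$. In polar coordinates $h=\rho\xi$ this gives
\begin{align*}
(1-s)\left[\mathfrak D^{s,k}_{q,\rm in}(f)(x)\right]^q
=(1-s)\int_{\mathbb S^{n-1}}\int_0^1
\frac{|\sum_{|\alpha|=k}\frac{k!}{\alpha!}\partial^\alpha f(x)\xi^\alpha+O(\rho)|^q}{\rho^{1-kq(1-s)}}\,d\rho\,d\mathcal H^{n-1}(\xi).
\end{align*}
Since $(1-s)\int_0^1\rho^{kq(1-s)-1}\,d\rho=(kq)^{-1}$, while the $O(\rho)$ contribution is $O(1-s)$, it follows that
\[
(1-s)^{\frac1q}\mathfrak D^{s,k}_{q,\rm in}(f)(x)\to(kq)^{-\frac1q}\mathbb D^k_q(f)(x)
\]
for every $x$.

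\emph{Uniform majorant.} If $|x|>R_0+k$, then $\Delta_h^kf(x)=0$ for $|h|\le1$, so the inner part vanishes. For all $x$, the bound $|\Delta_h^kf(x)|\lesssim|h|^k$ gives $(1-s)^{\frac1q}\mathfrak D^{s,k}_{q,\rm in}(f)(x)\lesssim {\bf 1}_{B(\mathbf 0,R_0+k)}(x)=:G(x)$, uniformly in $s$. Note that $G\in X\cap Y$.

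\emph{Upper bound.} Write $u_s:=(1-s)^{\frac1q}\mathfrak D^{s,k}_{q,\rm in}(f)$ and $u:=(kq)^{-\frac1q}\mathbb D^k_q(f)$. Then
\[
\|u_s\|_{X^{1-s}Y^s}\le\|u\|_{X^{1-s}Y^s}+\|u_s-u\|_{X^{1-s}Y^s}.
\]
Since $u\in X\cap Y$, Lemma \ref{lem-equa} shows the first term tends to $\|u\|_Y$. For the second term, \eqref{eq-UP} gives
\[
\|u_s-u\|_{X^{1-s}Y^s}\le\|u_s-u\|_X^{1-s}\|u_s-u\|_Y^s.
\]
Here $\|u_s-u\|_X\lesssim\|G\|_X$ is bounded. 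Also $\|u_s-u\|_Y\to0$: the functions $|u_s-u|\le CG$ tend to $0$ pointwise, and $Y$ has absolutely continuous norm. To turn this into norm convergence (a dominated convergence theorem in $Y$), I will use the sets $E_j:=\{|u_{s_j}-u|>\eta\}$, which satisfy ${\bf 1}_{E_j}\to 0$ a.e., and bound
\[
\|u_{s_j}-u\|_Y\le\eta\|G\|_Y+C\|G{\bf 1}_{E_j}\|_Y.
\]
Hence the second term tends to $0$ along every sequence $s_j\to1^-$, and so $\limsup_{s\to1^-}\|u_s\|_{X^{1-s}Y^s}\le\|u\|_Y$.

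\emph{Lower bound.} The triangle inequality gives $\|u_s\|_{X^{1-s}Y^s}\ge\|u\|_{X^{1-s}Y^s}-\|u_s-u\|_{X^{1-s}Y^s}$. By the same two facts, the right-hand side tends to $\|u\|_Y$, which proves \eqref{eq-ccw}.

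I expect the main obstacle to be the dominated convergence step. A BBF space with absolutely continuous norm only provides the convergence $\|G{\bf 1}_{E_j}\|_Y\to0$, so the argument must go through the truncation into the sets $E_j$ as above. The Taylor remainder in the first step is a routine computation.
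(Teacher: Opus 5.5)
Your proposal is correct and follows essentially the same route as the paper. Both arguments use the pointwise convergence of $u_s$ to $u$, the uniform majorant ${\bf 1}_K$, the norm convergence $\|u_s-u\|_Y\to0$ coming from the absolute continuity of $Y$, and finally \eqref{eq-UP} together with Lemma \ref{lem-equa}. The only difference is that you prove two ingredients yourself which the paper quotes from the literature: the pointwise limit (you use a direct Taylor expansion; the paper cites a lemma in the corrected multi-index form of Remark \ref{rem-imp}(iii)), and the dominated-convergence step (you use the truncation sets $E_j$; the paper cites the known equivalence between absolute continuity and dominated convergence).
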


\begin{proof}
Let $f\in C_{\rm c}^\infty$ and, for any $s\in(0,1)$ and
$x\in\mathbb{R}^n$, define
$g_s(x):=(1-s)^{\frac1q}\mathfrak{D}^{s,k}_{q,{\rm in}}(f)(x)$ and
$G(x):=(kq)^{-\frac1q}\mathbb{D}^k_q(f)(x)$.
From \cite[Lemma 5.7]{HLYY-ineq}
and
its corrected
multi-index form discussed in Remark \ref{rem-imp}(iii), we infer that, for any $x\in\mathbb{R}^n$,
\begin{align}\label{pointwise-limit}
\lim_{s\to1^-}g_s(x)=G(x).
\end{align}
Since $f\in C_{\rm c}^\infty$, there exists a compact set
$K\subset\mathbb{R}^n$ such that, for any $|h|\le1$,
$\operatorname{supp}(\Delta_h^k f)\subset K$. Moreover,
$|\Delta_h^k f(x)|\lesssim |h|^k$ for any $x\in\mathbb{R}^n$ and
$|h|\le1$. Thus, for any $s\in(0,1)$,
\begin{align*}
g_s(x)
&\lesssim
(1-s)^{\frac1q}{\bf 1}_{K}(x)
\left[\int_0^1 r^{k(1-s)q-1}\,dr\right]^{\frac1q}
\lesssim {\bf 1}_{K}(x).
\end{align*}
By this, the dominated convergence theorem for $Y$ (which is
equivalent to that $Y$ has an absolutely continuous norm; see
\cite[p.\,260]{LN24im}), and \eqref{pointwise-limit}, we conclude that
\begin{align}\label{Y-norm-conv}
\lim_{s\to1^-}\|g_s-G\|_Y=0.
\end{align}
Note that, for any $s\in(0,1)$, $|g_s-G|\lesssim {\bf 1}_K$ with the
implicit positive constant independent of $s$. From this, the definition
of $X^{1-s}Y^s$, and \eqref{Y-norm-conv}, it follows that
\begin{align*}
\left|\|g_s\|_{X^{1-s}Y^s}-\|G\|_{X^{1-s}Y^s}\right|
&\le \|g_s-G\|_{X^{1-s}Y^s}
\le \|g_s-G\|_X^{1-s}\|g_s-G\|_Y^s\notag\\[-2mm]
&\lesssim \|{\bf 1}_K\|_X^{1-s}\|g_s-G\|_Y^s\to0
\end{align*}
as $s\to1^-$. Applying this and Lemma \ref{lem-equa}, we obtain
\eqref{eq-ccw}. This completes the proof of Lemma \ref{lem-con}.
\end{proof}

For any given $r\in(0,\infty)$, the \emph{centered ball average operator}
$\mathcal{B}_r$ is defined by setting, for any $f\in L^1_{\rm loc}$ and
$x\in\mathbb{R}^n$,
\begin{align*}
\mathcal{B}_r(f)(x):=
\frac1{|B(x,r)|}\int_{B(x,r)}|f(y)|\,dy.
\end{align*}
Note that, if $1\le p<p_X<\infty$ holds, then $M$ is bounded on
$X^{\frac1p}$, which further implies that the centered ball average
operators $\{\mathcal{B}_r\}_{r\in(0,\infty)}$ are uniformly bounded on
$X^{\frac1p}$.

We next turn to prove Proposition \ref{proIN}.

\begin{proof}[Proof of Proposition \ref{proIN}]
Let $f\in X\cap W^{k,Y}$. Since $p_X,p_Y\in(1,\infty)$, it holds that
$\{\mathcal{B}_r\}_{r\in(0,\infty)}$ are uniformly bounded on both $X$ and
$Y$. From this, the assumption that both $X$ and
$Y$ have absolutely continuous norms, and the proof of
\cite[Corollary 3.10]{DGPYYZ24} (see also
\cite[Theorem 2.16]{HLYYZ-bsvy}), we infer that, for any fixed
$\epsilon\in(0,1)$, there exists $g\in C_{\rm c}^\infty$ such that
\begin{align}\label{eq-ddense}
\|f-g\|_X<\epsilon
\quad\text{and}\quad
\|f-g\|_{W^{k,Y}}<\epsilon.
\end{align}
Note that, in \eqref{eq-ddense},
we use the same approximating function
in both spaces. This is possible because the approximation in \cite[Corollary 3.10]{DGPYYZ24}
is constructed by truncation followed by mollification and is therefore independent of
the underlying function space.
Then, for any $s\in(0,1)$,
\begin{align}\label{eq-iiii}
&\left|
(1-s)^{\frac1q}
\left\|\mathfrak{D}^{s,k}_{q,{\rm in}}(f)\right\|_{X^{1-s}Y^s}
-(kq)^{-\frac1q}\left\|\mathbb{D}^k_q(f)\right\|_Y
\right|
\le {\rm I}(s)+{\rm II}(s)+{\rm III},
\end{align}
where
\begin{align*}
{\rm I}(s)&:=(1-s)^{\frac1q}
\left|
\left\|\mathfrak{D}^{s,k}_{q,{\rm in}}(f)\right\|_{X^{1-s}Y^s}
-
\left\|\mathfrak{D}^{s,k}_{q,{\rm in}}(g)\right\|_{X^{1-s}Y^s}
\right|,\\
{\rm II}(s)&:=
\left|
(1-s)^{\frac1q}
\left\|\mathfrak{D}^{s,k}_{q,{\rm in}}(g)\right\|_{X^{1-s}Y^s}
-(kq)^{-\frac1q}\left\|\mathbb{D}^k_q(g)\right\|_Y
\right|,
\end{align*}
and
\begin{align*}
{\rm III}:=(kq)^{-\frac1q}
\left|
\left\|\mathbb{D}^k_q(g)\right\|_Y
-
\left\|\mathbb{D}^k_q(f)\right\|_Y
\right|.
\end{align*}
Noting that $\mathfrak{D}^{s,k}_{q,{\rm in}}(f-g)
\le \mathfrak{D}^{s,k}_q(f-g)$, using the assumption
$n(\frac1{p_Y}-\frac1q)<k$ and Theorem \ref{thmGNS}(i), we conclude
that there exist $s_0\in(0,1)$ and a positive constant $C_0$, independent
of $s$ and $f$, such that, for any $s\in(s_0,1)$,
\begin{align*}
{\rm I}(s)
\le
C_0(s-s_0)^{-\frac1q}
\|f-g\|_X^{1-s}
\|\nabla^k(f-g)\|_Y^s.
\end{align*}
This, combined with \eqref{eq-ddense}, implies
\begin{align}\label{eq-Is}
\limsup_{s\to1^-}{\rm I}(s)\le C_0 (1-s_0)^{-\frac1q}\epsilon.
\end{align}
By Lemma \ref{lem-con} and $g\in C_{\rm c}^\infty$, we obtain
\begin{align}\label{eq-iis}
\lim_{s\to1^-}{\rm II}(s)=0.
\end{align}
Furthermore, from \eqref{eq-ssim}, it follows that there exists a
positive constant $C_1$, depending only on $n$, $k$, and $q$, such that
\begin{align*}
{\rm III}\le C_1\|\nabla^k(f-g)\|_Y\le C_1\epsilon.
\end{align*}
Combining this, \eqref{eq-iiii}, \eqref{eq-Is}, and \eqref{eq-iis}, we
conclude that
\begin{align*}
&\limsup_{s\to1^-}
\left|
(1-s)^{\frac1q}
\left\|\mathfrak{D}^{s,k}_{q,{\rm in}}(f)\right\|_{X^{1-s}Y^s}
-(kq)^{-\frac1q}\left\|\mathbb{D}^k_q(f)\right\|_Y
\right|
\le \left[C_0(1-s_0)^{-\frac1q}+C_1\right]\epsilon.
\end{align*}
Since $\epsilon$ is arbitrary, we deduce \eqref{eq-in1}, which completes
the proof of Proposition \ref{proIN}.
\end{proof}

Now, we turn to the outer part $\mathfrak{D}^{s,k}_{q,{\rm out}}(f)$.

\begin{proposition}\label{proOut}
Let $X,Y$ be {\rm BBF} spaces. Assume that $p_X,p_Y\in(1,\infty)$. Let
$k\in\mathbb{N}$ and $q\in[1,\infty)$ satisfy
$n(\frac1{p_Y}-\frac1q)<k$. Then, for any $f\in X\cap W^{k,Y}$,
\begin{align}\label{eq-out0}
\lim_{s\to1^-}(1-s)^{\frac1q}
\left\|\mathfrak{D}^{s,k}_{q,{\rm out}}(f)\right\|_{X^{1-s}Y^s}=0.
\end{align}
\end{proposition}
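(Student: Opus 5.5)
The plan is to use Proposition \ref{proPoin}(i) with $r=1$. This gives a pointwise bound that, for $s$ close to $1$, carries no blow-up factor $(1-s)^{-1/q}$. Multiplying by $(1-s)^{1/q}$ then kills it.

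The first step is to fix the auxiliary exponents exactly as in the proof of Theorem \ref{thmGNS}(i). Let $F(t):=n(\frac{1-t}{p_X}+\frac{t}{p_Y}-\frac1q)-tk$. Since $F(1)<0$ by the assumption $n(\frac1{p_Y}-\frac1q)<k$, continuity gives $s_0\in(0,1)$ with $F(s_0)<0$. The inequality is strict, so I can choose $p_1\in(1,p_X)$ and $p_2\in(1,p_Y)$ with
$n(\frac{1-s_0}{p_1}+\frac{s_0}{p_2}-\frac1q)<s_0k$.

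By Remark \ref{rem-Boyd}(ii), $f\in W^{k,1}_{\rm loc}$. Proposition \ref{proPoin}(i) with $r=1$ then yields, for any $s\in(s_0,1)$ and almost every $x$,
\begin{align*}
\mathfrak{D}^{s,k}_{q,{\rm out}}(f)(x)\lesssim (s-s_0)^{-\frac1q}\left[M^{\sharp}_{p_1}(f)(x)\right]^{1-s_0}\left[M_{p_2}\left(\left|\nabla^k f\right|\right)(x)\right]^{s_0}.
\end{align*}
For $s\in(\frac{1+s_0}{2},1)$ the factor $(s-s_0)^{-1/q}$ is bounded uniformly in $s$.

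The one real obstacle is that the exponents on the right are frozen at $s_0$, while the target norm $X^{1-s}Y^s$ varies with $s$. To handle this I will first bound $M^\sharp_{p_1}(f)\le 2M_{p_1}(f)=:A$, and write $B:=M_{p_2}(|\nabla^kf|)$. By the boundedness of $M$ on $X^{1/p_1}$ and on $Y^{1/p_2}$, we have $A\in X$ and $B\in Y$ (note $|\nabla^k f|\in Y$ since $f\in W^{k,Y}$). The goal is then a bound for $\|A^{1-s_0}B^{s_0}\|_{X^{1-s}Y^s}$ that is uniform for $s$ near $1$.

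My first attempt is to split pointwise according to whether $A\le B$ or $A>B$:
\begin{align*}
A^{1-s_0}B^{s_0}\le A^{1-s}B^{s}\,{\bf 1}_{\{A\le B\}}+A^{1-s_0}B^{s_0}{\bf 1}_{\{A>B\}}.
\end{align*}
On $\{A\le B\}$, since $s>s_0$ we have $A^{1-s_0}B^{s_0}=A^{1-s}B^s(A/B)^{s-s_0}\le A^{1-s}B^s$. On $\{A>B\}$, the same computation gives $A^{1-s_0}B^{s_0}\le A^{1-s}B^s(A/B)^{s-s_0}$, and this factor is not bounded. I will therefore instead use the elementary bound $A^{1-s_0}B^{s_0}\le A+B$, which simply reduces matters to controlling $\|A\|_{X^{1-s}Y^s}$ and $\|B\|_{X^{1-s}Y^s}$.

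The cleaner route is to rewrite $A^{1-s_0}B^{s_0}=A^{1-s}\cdot(A^{s-s_0}B^{s_0})$ and to show that $A^{s-s_0}B^{s_0}\le A+B$ lies in $Y$. Here $A\in X$ but possibly $A\notin Y$, so I will avoid this decomposition as well. Instead I will exploit linearity: the norm $\|\cdot\|_{X^{1-s}Y^s}$ is a lattice norm, hence
\begin{align*}
\|A^{1-s_0}B^{s_0}\|_{X^{1-s}Y^s}\le\|A\|_{X^{1-s}Y^s}+\|B\|_{X^{1-s}Y^s}.
\end{align*}
Each term should then be bounded via Remark \ref{rem-cp}(ii) provided $A,B\in X\cap Y$. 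Since $f\in X\cap W^{k,Y}$, both $f$ and $\nabla^kf$ lie in $Y$. The derivative term also lies in $X$ only under extra assumptions, which is the weak point of this route. If it fails, I will fall back on the splitting above, made quantitative in the style of the BBM interpolation: by Lemma \ref{lem-equa}-type monotonicity in $s$, the quantity $\|A^{1-s_0}B^{s_0}\|_{X^{1-s}Y^s}$ should remain bounded as $s\to1^-$.

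Granting a uniform bound $C(f)$, the final step is immediate: $(1-s)^{1/q}\|\mathfrak{D}^{s,k}_{q,{\rm out}}(f)\|_{X^{1-s}Y^s}\lesssim(1-s)^{1/q}C(f)\to0$, which is \eqref{eq-out0}.
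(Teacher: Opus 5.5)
Your reduction through Proposition \ref{proPoin}(i) with $r=1$ matches the paper. However, the proposal stops at the one step that carries the content. You never prove a bound on $\|A^{1-s_0}B^{s_0}\|_{X^{1-s}Y^s}$ that is uniform as $s\to1^-$; your last paragraph simply grants it, and neither fallback supplies it.

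\emph{The $A+B$ route.} The bound $A^{1-s_0}B^{s_0}\le A+B$ requires $\|B\|_{X^{1-s}Y^s}<\infty$. But $B=M_{p_2}(|\nabla^kf|)$ is only known to lie in $Y$, and $Y\not\subset X^{1-s}Y^s$ in general. For instance, if $X=L^a$ and $Y=L^b$ with $a\neq b$, then $X^{1-s}Y^s=L^c$ with $\frac1c=\frac{1-s}a+\frac sb$, and $L^b\not\subset L^c$.

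\emph{The Lemma \ref{lem-equa} fallback.} That lemma applies only to functions in $X\cap Y$. The function $A^{1-s_0}B^{s_0}$ lies in $X^{1-s_0}Y^{s_0}$ but not in $X$ in general, and the ``monotonicity in $s$'' you invoke is not established anywhere.

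The missing idea rescues the factorization you discarded. Since $W^{k,Y}$ is the inhomogeneous space, you have $f\in Y$ as well as $f\in X$. Choose the exponents as follows:
\begin{itemize}
\item[(a)] pick $s_0$ with $n(\frac{1-s_0}{\min\{p_X,p_Y\}}+\frac{s_0}{p_Y}-\frac1q)<s_0k$, which is possible by continuity at $t=1$;
\item[(b)] then pick $p_1\in(1,\min\{p_X,p_Y\})$ and $p_2\in(1,p_Y)$ satisfying the usual strict inequality.
\end{itemize}
Boundedness of $M$ on $X^{1/p_1}$ and on $Y^{1/p_1}$ then gives $A=M_{p_1}(f)\in X\cap Y$, with $\|A\|_X\lesssim\|f\|_X$ and $\|A\|_Y\lesssim\|f\|_Y$. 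Your objection that $A$ may not lie in $Y$ disappears.

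Now write $A^{1-s_0}B^{s_0}=A^{1-s}G^s$ with $G:=A^{\frac{s-s_0}{s}}B^{\frac{s_0}{s}}$. Using the lattice H\"older inequality $\|u^{1-\theta}v^{\theta}\|_Y\le\|u\|_Y^{1-\theta}\|v\|_Y^{\theta}$ with $\theta=s_0/s$, you get
\[
\left\|A^{1-s_0}B^{s_0}\right\|_{X^{1-s}Y^s}\le\|A\|_X^{1-s}\|A\|_Y^{s-s_0}\|B\|_Y^{s_0}\lesssim\|f\|_X+\|f\|_{W^{k,Y}}
\]
uniformly in $s\in(s_0,1)$. Multiplying by $(1-s)^{1/q}(s-s_0)^{-1/q}$ then yields \eqref{eq-out0}.

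This is exactly the paper's argument. The paper phrases the same estimate through the reiteration identity $X^{1-s}Y^s=(X^{1-s_0}Y^{s_0})^{\frac{1-s}{1-s_0}}Y^{\frac{s-s_0}{1-s_0}}$ combined with \eqref{eq-UP}. Note also that your choice $p_1\in(1,p_X)$, copied from Theorem \ref{thmGNS}, does not ensure $p_1<p_Y$ when $p_X>p_Y$. So the choice of exponents must change as well.
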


\begin{proof}
Let $f\in X\cap W^{k,Y}$. By the assumption
$n(\frac1{p_Y}-\frac1q)<k$, we find that there exists $s_0\in(0,1)$ such
that
\begin{align*}
n\left(\frac{1-s_0}{\min\{p_X,p_Y\}}+\frac{s_0}{p_Y}-\frac1q\right)<s_0k.
\end{align*}
Choose $p_1\in(1,\min\{p_X,p_Y\})$ and $p_2\in(1,p_Y)$ satisfying that
\begin{align*}
n\left(\frac{1-s_0}{p_1}+\frac{s_0}{p_2}-\frac1q\right)<s_0k.
\end{align*}
Using this, \eqref{eq-Pout} with $r:=1$, and
$M_{p_1}^\sharp(f)\lesssim M_{p_1}(f)$, we find that, for any
$s\in(s_0,1)$,
\begin{align}\label{eq-out1}
&(1-s)^{\frac1q}
\left\|\mathfrak{D}^{s,k}_{q,{\rm out}}(f)\right\|_{X^{1-s}Y^s}
\lesssim
\frac{(1-s)^{\frac1q}}{(s-s_0)^{\frac1q}}
\left\|
\left[M_{p_1}(f)\right]^{1-s_0}
\left[M_{p_2}(|\nabla^k f|)\right]^{s_0}
\right\|_{X^{1-s}Y^s}.
\end{align}
Observe that, for any $s\in(s_0,1)$,
\begin{align*}
X^{1-s}Y^s
=
\left(X^{1-s_0}Y^{s_0}\right)^{\frac{1-s}{1-s_0}}
Y^{\frac{s-s_0}{1-s_0}}.
\end{align*}
From this, \eqref{eq-UP}, and the boundedness of $M$ on
$X^{\frac1{p_1}}$ and $Y^{\frac1{p_2}}$, it follows that
\begin{align*}
&\left\|
\left[M_{p_1}(f)\right]^{1-s_0}
\left[M_{p_2}(|\nabla^k f|)\right]^{s_0}
\right\|_{X^{1-s}Y^s}
\notag\\
&\quad\le
\left\|
\left[M_{p_1}(f)\right]^{1-s_0}
\left[M_{p_2}(|\nabla^k f|)\right]^{s_0}
\right\|_{X^{1-s_0}Y^{s_0}}^{\frac{1-s}{1-s_0}}
\left\|
\left[M_{p_1}(f)\right]^{1-s_0}
\left[M_{p_2}(|\nabla^k f|)\right]^{s_0}
\right\|_{Y}^{\frac{s-s_0}{1-s_0}}
\notag\\
&\quad\le
\left[
\|M_{p_1}(f)\|_X^{1-s_0}
\|M_{p_2}(|\nabla^k f|)\|_Y^{s_0}
\right]^{\frac{1-s}{1-s_0}}
\left[
\|M_{p_1}(f)\|_Y^{1-s_0}
\|M_{p_2}(|\nabla^k f|)\|_Y^{s_0}
\right]^{\frac{s-s_0}{1-s_0}}
\notag\\
&\quad\lesssim
\left(
\|f\|_X^{1-s_0}\|\nabla^k f\|_Y^{s_0}
\right)^{\frac{1-s}{1-s_0}}
\left(
\|f\|_Y^{1-s_0}\|\nabla^k f\|_Y^{s_0}
\right)^{\frac{s-s_0}{1-s_0}}
\lesssim \|f\|_X+\|f\|_{W^{k,Y}}.
\end{align*}
By this and \eqref{eq-out1}, we obtain \eqref{eq-out0}. This completes
the proof of Proposition \ref{proOut}.
\end{proof}

Finally, applying Propositions \ref{proIN} and \ref{proOut}, we give the
proof of Theorem \ref{thmBBM}.

\begin{proof}[Proof of Theorem \ref{thmBBM}]
Let $f\in X\cap W^{k,Y}$. Since
\begin{align*}
\mathfrak{D}^{s,k}_{q}(f)
\le
\mathfrak{D}^{s,k}_{q,{\rm in}}(f)
+
\mathfrak{D}^{s,k}_{q,{\rm out}}(f)
\end{align*}
and
\begin{align*}
\mathfrak{D}^{s,k}_{q,{\rm in}}(f)
\le
\mathfrak{D}^{s,k}_{q}(f)
+
\mathfrak{D}^{s,k}_{q,{\rm out}}(f),
\end{align*}
Propositions \ref{proIN} and \ref{proOut} imply that
$(1-s)^{\frac1q}\|\mathfrak{D}^{s,k}_{q}(f)\|_{X^{1-s}Y^s}$ and
$(1-s)^{\frac1q}\|\mathfrak{D}^{s,k}_{q,{\rm in}}(f)\|_{X^{1-s}Y^s}$
have the same limit as $s\to1^-$. Applying Proposition \ref{proIN}, we
obtain Theorem \ref{thmBBM}.
\end{proof}

\subsection{Proof of Theorem \ref{thmMS}}
\label{secMS}
In this subsection, we give the proof of
Theorem \ref{thmMS}.
We first extend the structure of Triebel--Lizorkin space from the Lebesgue space $L^p$ to
the {\rm BBF} space $X$.

\begin{definition}
Let $k\in\mathbb{N}$, $\sigma\in(0,k)$, $q\in [1,\infty)$, and
$X$ be a {\rm BBF} space. The \emph{Triebel--Lizorkin type space
$F^{\sigma,k}_{X,q}$} is defined to be the set of all $f\in X$ such that
\begin{align*}
\|f\|_{F^{\sigma,k}_{X,q}}:=
\|f\|_X+ \left\|\left[
\int_{\mathbb{R}^n}\frac{|\Delta^k_h f(\cdot)|^q}{|h|^{n+\sigma q}}\,dh
\right]^{\frac{1}{q}}\right\|_X<\infty.
\end{align*}
\end{definition}

\begin{remark}
Let $X:= L^p$ with $p\in (0,\infty)$.
If $\sigma\in (\frac{n}{\min\{p,q\}},k)$, then
the space $F^{\sigma,k}_{L^p,q}$ in this case reduces to the classical Triebel--Lizorkin space
$F^{\sigma}_{p,q}$ (see \cite[p.\,101]{Tri83});
if $k=1$ and $p\in [1,\infty)$, then the space $F^{\sigma,1}_{L^p,p}$ in this case reduces to the first
order fractional Sobolev space $W^{\sigma,p}$
(see \cite{Leo23}). More embedding results about
Triebel--Lizorkin type spaces can be found in \cite{BLL21}.
\end{remark}

To show Theorem \ref{thmMS}, we need the following upper bound estimate.
Its proof is similar to that of \cite[Lemma 2.25]{PYYZ24}, with
$L^p_{\omega}$ replaced by $X$. For completeness, we include some details.

\begin{lemma}\label{lem-s0out}
Let $k\in \mathbb{N}$, $q\in[1,\infty)$, and $X$ be a {\rm BBF}
space. Assume that $X^{\frac{1}{q}}$ is a {\rm BBF} space and that the centered ball average operators
$\{\mathcal{B}_r\}_{r\in(0,\infty)}$ are uniformly bounded on
$X^{\frac{1}{q}}$. Then, for any $s\in(0,1)$ and $f\in X$,
\begin{align}\label{eq-s0out}
s^{\frac{1}{q}}
\left\|
\left[
\int_{|h|\geq1}
\frac{|\Delta_h^k f(\cdot)|^q}{|h|^{n+skq}}\,dh
\right]^{\frac{1}{q}}
\right\|_X
\lesssim
\|f\|_X,
\end{align}
where the implicit positive constant is independent of $s$ and $f$.
\end{lemma}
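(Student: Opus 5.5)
The plan is to expand the difference, pass to the convexification $X^{\frac1q}$ where the $q$-th power of the integral becomes a genuine sum/integral of nonnegative functions, and use the uniform boundedness of the ball averages there.

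First, expand $\Delta_h^k f(x)=\sum_{j=0}^k(-1)^{k-j}\binom{k}{j}f(x+jh)$. Then
\begin{align*}
|\Delta_h^kf(x)|^q\lesssim\sum_{j=0}^k|f(x+jh)|^q .
\end{align*}
Consequently, with $G(x):=\int_{|h|\ge1}|\Delta_h^kf(x)|^q|h|^{-n-skq}\,dh$, the left-hand side of \eqref{eq-s0out} equals $s^{\frac1q}\|G\|_{X^{1/q}}^{\frac1q}$. It therefore suffices to show
\begin{align*}
\|G\|_{X^{1/q}}\lesssim s^{-1}\||f|^q\|_{X^{1/q}}.
\end{align*}

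The term $j=0$ is the easiest. It equals $|f(x)|^q\int_{|h|\ge1}|h|^{-n-skq}\,dh=\frac{|\mathbb{S}^{n-1}|}{skq}|f(x)|^q$, whose $X^{1/q}$ norm is $\frac{C}{s}\|f\|_X^q$.

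For $j\in\{1,\dots,k\}$, substitute $y=jh$. The change of variables $dh=j^{-n}dy$ together with $|h|^{-n-skq}=j^{n+skq}|y|^{-n-skq}$ gives a factor $j^{skq}\le k^{kq}$, and the region becomes $|y|\ge j$, contained in $|y|\ge1$. This reduces every such term to
\begin{align*}
H(x):=\int_{|y|\ge1}\frac{|f(x+y)|^q}{|y|^{n+skq}}\,dy .
\end{align*}
Decompose dyadically into annuli $2^m\le|y|<2^{m+1}$, $m\in\mathbb{Z}_+$, to get
\begin{align*}
H(x)\le\sum_{m\ge0}2^{-m(n+skq)}\int_{|y|<2^{m+1}}|f(x+y)|^q\,dy\lesssim\sum_{m\ge0}2^{-mskq}\mathcal{B}_{2^{m+1}}(|f|^q)(x).
\end{align*}

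Now take the $X^{\frac1q}$ norm. Since $X^{\frac1q}$ is a BBF space, its norm is countably subadditive on nonnegative series: apply the triangle inequality to partial sums and then Fatou, i.e.\ property (iii) of Definition \ref{def-X}. The uniform bound $\|\mathcal{B}_r(|f|^q)\|_{X^{1/q}}\le C\||f|^q\|_{X^{1/q}}=C\|f\|_X^q$ then gives
\begin{align*}
\|H\|_{X^{1/q}}\lesssim\sum_{m\ge0}2^{-mskq}\|f\|_X^q\lesssim\frac{1}{1-2^{-skq}}\|f\|_X^q\lesssim s^{-1}\|f\|_X^q ,
\end{align*}
using $1-2^{-t}\gtrsim t$ for $t\in(0,kq]$.

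Combining the $k+1$ terms yields $\|G\|_{X^{1/q}}\lesssim s^{-1}\|f\|_X^q$, and taking $q$-th roots gives \eqref{eq-s0out}. All constants depend only on $n$, $k$, $q$ and the uniform bound for $\{\mathcal{B}_r\}$.

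The main point needing care is the passage from $X$ to $X^{\frac1q}$. The identity $\|(\int\cdots)^{1/q}\|_X=\|\int\cdots\|_{X^{1/q}}^{1/q}$ is what lets us exploit the triangle inequality in the Banach space $X^{\frac1q}$; without the assumption that $X^{\frac1q}$ is BBF we would only have a quasi-norm. The remaining steps should be routine, the key being that the geometric series produces exactly $s^{-1}$, which cancels $s^{\frac1q}$ after the $q$-th root.
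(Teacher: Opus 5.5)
Your proof is correct and follows essentially the same route as the paper. Both arguments expand $\Delta_h^k f$, split $\{|h|\ge1\}$ into dyadic annuli, and dominate each piece by $2^{-mskq}$ times a centered ball average of $|f|^q$; the sum is then controlled in $X^{\frac1q}$ via the triangle inequality on partial sums, the Fatou property, and the uniform bound on $\{\mathcal{B}_r\}$, which produces the factor $s^{-1}$. The only cosmetic difference is that you compute the $j=0$ term exactly and rescale $y=jh$ for $j\ge1$, so all your averages have radius $2^{m+1}$, whereas the paper keeps the radii $j2^{m+1}$ directly.
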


\begin{proof}
By decomposing $\{h\in\mathbb{R}^n:|h|\geq1\}$ into dyadic annuli,
we find that, for every $x\in\mathbb{R}^n$,
\begin{align*}
\int_{|h|\geq1}
\frac{|\Delta_h^k f(x)|^q}{|h|^{n+skq}}\,dh
&\lesssim
\sum_{m\in\mathbb{Z}_+}
2^{-m(n+skq)}
\sum_{j=0}^k
\int_{|h|<2^{m+1}}|f(x+jh)|^q\,dh\\
&\lesssim
\sum_{m\in\mathbb{Z}_+}
2^{-mskq}
\left[
|f(x)|^q+
\sum_{j=1}^k
\mathcal{B}_{j2^{m+1}}(|f|^q)(x)
\right].
\end{align*}
Applying the triangle inequality in $X^{\frac{1}{q}}$ to finite partial sums, and then using the Fatou property and the uniform boundedness of the centered ball average operators on $X^{\frac{1}{q}}$, we obtain
\begin{align*}
&\left\|
\int_{|h|\geq1}
\frac{|\Delta_h^k f(\cdot)|^q}{|h|^{n+skq}}\,dh
\right\|_{X^{\frac{1}{q}}}\\
&\quad\lesssim
\sum_{m\in\mathbb{Z}_+}
2^{-mskq}
\left[
\big\||f|^q\big\|_{X^{\frac{1}{q}}}
+
\sum_{j=1}^k
\left\|
\mathcal{B}_{j2^{m+1}}(|f|^q)
\right\|_{X^{\frac{1}{q}}}
\right]\\
&\quad\lesssim
\left(
\sum_{m\in\mathbb{Z}_+}2^{-mskq}
\right)
\big\||f|^q\big\|_{X^{\frac{1}{q}}}\lesssim
s^{-1}\|f\|_X^q.
\end{align*}
Since
\begin{align*}
\left\|
\left[
\int_{|h|\geq1}
\frac{|\Delta_h^k f(\cdot)|^q}{|h|^{n+skq}}\,dh
\right]^{\frac{1}{q}}
\right\|_X^q
=
\left\|
\int_{|h|\geq1}
\frac{|\Delta_h^k f(\cdot)|^q}{|h|^{n+skq}}\,dh
\right\|_{X^{\frac{1}{q}}},
\end{align*}
we obtain \eqref{eq-s0out}, which completes the proof of Lemma \ref{lem-s0out}.
\end{proof}

Next, we prove Theorem \ref{thmMS}.

\begin{proof}[Proof of Theorem \ref{thmMS}]
Let $f\in [\bigcup_{\sigma\in (0,k)}{F}^{\sigma,k}_{X,q}]\cap
[\bigcup_{\sigma\in (0,k)}{F}^{\sigma,k}_{Y,q}]$ with compact support. Then there exists $R_0\in(0,\infty)$ such that, for any $|h|\ge R_0$,
the sets $\{{\rm supp}\,f(\cdot+jh)\}_{j=0}^{k}$ are pairwise
disjoint. Hence, for all sufficiently small $s\in(0,1)$ and all
$|h|\ge s^{-\frac1q}$, by \eqref{eq-sho-hd},
we find that
\begin{align*}
\left|\Delta_{h}^k
f(\cdot)\right|=
\sum_{j=0}^{k}\binom{k}{j}|f(\cdot+jh)|\ge | f(\cdot)|.
\end{align*}
Thus, for all sufficiently small $s\in (0,1)$,
\begin{align}\label{eq-linf1}
{s}^{\frac{1}{q}}\left\|
\left[\int_{|h|\ge {s}^{-\frac{1}{q}}}\frac{|\Delta^k_h f(\cdot)|^q}{|h|^{n+skq}}\,dh\right]^{\frac{1}{q}}
\right\|_{X^{1-s}Y^{s}}
\ge
{s}^{\frac{1}{q}}\left\|
\left[\int_{|h|\ge {s}^{-\frac{1}{q}}}\frac{| f(\cdot)|^q}{|h|^{n+skq}}\,dh\right]^{\frac{1}{q}}
\right\|_{X^{1-s}Y^{s}}.
\end{align}
Note that, for any $s\in (0,1)$,
\begin{align*}
\left[s\int_{|h|\ge {s}^{-\frac{1}{q}}}\frac{1}{|h|^{n+skq}}\,dh\right]^{\frac{1}{q}}
= \left(\frac{s^{sk}}{kq}\left|\mathbb{S}^{n-1}\right|\right)^{\frac{1}{q}}.
\end{align*}
Since $s^{sk}\to1$ as $s\to0^+$, from this, \eqref{eq-linf1}, and
Lemma \ref{lem-equa} applied to the pair $(Y,X)$ with $1-s$ in place of
$s$, we deduce that
\begin{align}\label{eq-XX1}
\liminf_{s\to 0^+}{s}^{\frac{1}{q}}
\left\|
\mathfrak{D}^{s,k}_q (f)
\right\|_{X^{1-s}Y^{s}}
\ge \left(\frac{1}{kq}\left|\mathbb{S}^{n-1}\right|\right)^{\frac{1}{q}}
\left\|f
\right\|_{X}.
\end{align}
On the other hand, applying \eqref{eq-s0out} and the assumption $f\in X\cap Y$, we conclude that
\begin{align}\label{eq-XX0}
&\limsup_{s\to 0^+}{s}^{\frac{1}{q}}\left\|\left[\int_{|h|\ge 1}\frac{|\Delta^k_h f(\cdot)|^q}{|h|^{n+skq}}\,dh\right]^{\frac{1}{q}}\right\|_{X^{1-s}Y^{s}}\notag\\
& \quad\le
\limsup_{s\to 0^+}\left[
{s}^{\frac{1}{q}}\left\|\left[\int_{|h|\ge 1}\frac{|\Delta^k_h f(\cdot)|^q}{|h|^{n+skq}}\,dh\right]^{\frac{1}{q}}\right\|_{X}\right]^{1-s}
\left[
{s}^{\frac{1}{q}}\left\|\left[\int_{|h|\ge 1}\frac{|\Delta^k_h f(\cdot)|^q}{|h|^{n+skq}}\,dh\right]^{\frac{1}{q}}\right\|_{Y}\right]^{s}
\notag\\
&\quad \lesssim \limsup_{s\to 0^+}\|f\|_X^{1-s} \|f\|_Y^{s}
=\|f\|_X.
\end{align}
Moreover, since
$f\in [\bigcup_{\sigma\in (0,k)}{F}^{\sigma,k}_{X,q}]
\cap [\bigcup_{\sigma\in (0,k)}{F}^{\sigma,k}_{Y,q}]$, we can choose
$\sigma_1,\sigma_2 \in(0,k)$ such that
$f\in {F}^{\sigma_1,k}_{X,q}\cap {F}^{\sigma_2,k}_{Y,q}$. By this, we find that, for any $s\in (0,\sigma_1/k)$,
\begin{align*}
\left\|\left[\int_{|h|\le  1 }\frac{|\Delta^k_h f(\cdot)|^q}{|h|^{n+s kq}}\,dh\right]^{\frac{1}{q}}\right\|_{X}
&\le
\left\|\left[\int_{|h|\le 1}\frac{|\Delta^k_h f(\cdot)|^q}{|h|^{n+\sigma_1 q}}\,dh\right]^{\frac{1}{q}}\right\|_{X}
\le  \|f\|_{{F}^{\sigma_1,k}_{X,q}}<\infty
\end{align*}
and, for any $s\in (0,\sigma_2/k)$,
\begin{align*}
\left\|\left[\int_{|h|\le 1}\frac{|\Delta^k_h f(\cdot)|^q}{|h|^{n+skq}}\,dh\right]^{\frac{1}{q}}\right\|_{Y}
\le  \|f\|_{{F}^{\sigma_2,k}_{Y,q}}<\infty.
\end{align*}
From these and \eqref{eq-UP}, it follows that
\begin{align*}
\limsup_{s\to 0^+}{s}^{\frac{1}{q}}\left\|\left[\int_{|h|\le 1}\frac{|\Delta^k_h f(\cdot)|^q}{|h|^{n+skq}}\,dh\right]^{\frac{1}{q}}\right\|_{X^{1-s}Y^{s}}
\le \limsup_{s\to 0^+} {s}^{\frac{1}{q}}
\|f\|_{{F}^{\sigma_1,k}_{X,q}}^{1-s}
\|f\|_{{F}^{\sigma_2,k}_{Y,q}}^{s}=0.
\end{align*}
This, together with \eqref{eq-XX0}, implies that
\begin{align*}
\limsup_{s\to 0^+}{s}^{\frac{1}{q}}
\left\|
\mathfrak{D}^{s,k}_q (f)
\right\|_{X^{1-s}Y^{s}}
\lesssim \|f\|_X.
\end{align*}
Combining this and \eqref{eq-XX1}, we obtain \eqref{eq-XX3}.
This then completes the proof of Theorem \ref{thmMS}.
\end{proof}

\section{Proof of Theorem \ref{thm-fs}}\label{secOP}

In this section, we aim to show Theorem \ref{thm-fs}.
We begin with some necessary definitions.
Let $f\in L^0$.
The \emph{distribution
function of $f$} is defined by setting, for any $\alpha\in (0,\infty)$,
\begin{align*}
d_f (\alpha):= |\{x\in \mathbb{R}^n:|f(x)|>\alpha\}|.
\end{align*}
The \emph{non-increasing rearrangement function of $f$}
is defined by setting, for any $t\in (0,\infty)$,
\begin{align}\label{eq-fstar}
f^* (t):= \inf\left\{\alpha\in (0,\infty):d_f (\alpha)\le t\right\}.
\end{align}
It is well known that, for any $f\in L^0$, $f^*$ is
a non-negative, non-increasing, and right-continuous function on $[0,\infty)$ (see \cite[p.\,41, Proposition 1.7]{BS88}). Recall that
a Banach function space $X$ [see Remark \ref{rem-bbf}(ii) for its definition]
is called a \emph{rearrangement invariant Banach function space} if,
for any $f\in L^0$ and $g\in X$ satisfying $d_f = d_g$,
it holds that $f\in X$ and $\|f\|_X =\|g\|_X$.

If $X$ is a rearrangement invariant Banach function space,
then, by the Luxemburg representation theorem (see, for instance, \cite[p.\,62, Theorem 4.10]{BS88}),
there exists a unique rearrangement invariant Banach function
space $\overline{X}$ on
$(0,\infty)$ such that, for any $f\in L^0$,
\begin{align}\label{eq-OX}
\|f\|_X =\|f^*\|_{\overline{X}}.
\end{align}
This result proves that
the norm of any rearrangement invariant space is essentially determined
by the distribution of its functions.
Moreover, from \cite[p.\,148, Proposition 5.11]{BS88}, we infer that the \emph{dilation operators} are bounded on rearrangement
invariant Banach function spaces; that is,
if $X$ is a rearrangement
invariant Banach function space, then,
for any $t\in (0,\infty)$ and $f\in X$,
\begin{align}\label{eq-bX}
\left\| f^*(\cdot/t)\right\|_{\overline{X}} \le
\max\{1,t\}\|f\|_X.
\end{align}

To show Theorem \ref{thm-fs}, we need the following conclusion, which proves
that, among all rearrangement invariant Banach function spaces, the Calder\'on--Lozanovski\u{\i} space
is the minimal target space for which the Gagliardo--Nirenberg inequalities hold.

\begin{theorem}\label{thmOP}
Let $X,Y$, and $B$ be rearrangement invariant Banach function spaces.
\begin{enumerate}[{\rm (i)}]
\item Let $k\in\mathbb{N}$, $s\in(0,1)$, and $q\in[1,\infty)$. If there exists a positive constant
$C$ such that the fractional
Gagliardo--Nirenberg inequality
\begin{align}\label{eq-GN}
\left\|\mathfrak{D}^{s,k}_q(f)\right\|_B
\le C
\|f\|_X^{1-s}\left\|\nabla^k f\right\|_Y^s
\end{align}
holds for any $f\in X\cap\dot{W}^{k,Y}$, then
$X^{1-s}Y^s\hookrightarrow B$.

\item Let $j,k\in\mathbb{N}$ with $1\le j<k$. If
there exists a positive constant
$C$ such that
the Gagliardo--Nirenberg
inequality
\begin{align}\label{eq-GN-integer}
\left\|\nabla^j f\right\|_B
\le C
\|f\|_X^{1-\frac{j}{k}}
\left\|\nabla^k f\right\|_Y^{\frac{j}{k}}
\end{align}
holds for any $f\in X\cap\dot{W}^{k,Y}$, then
$X^{1-\frac{j}{k}}Y^{\frac{j}{k}}\hookrightarrow B$.
\end{enumerate}
\end{theorem}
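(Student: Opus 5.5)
We first reduce the embedding $X^{1-s}Y^s\hookrightarrow B$ to an estimate on simple functions. Since all three spaces are rearrangement invariant, the Luxemburg representation \eqref{eq-OX} lets us work with decreasing functions on $(0,\infty)$. It therefore suffices to show that, for nonnegative decreasing simple functions $u,v$ on $(0,\infty)$ (finite sums of $a_i{\bf 1}_{[0,t_i)}$),
\begin{align*}
\left\|u^{1-s}v^s\right\|_{\overline B}\lesssim \|u\|_{\overline X}^{1-s}\|v\|_{\overline Y}^s .
\end{align*}
Given this, a general $h$ with $|h|\le|f|^{1-s}|g|^s$ is handled as follows. We use $h^*\le (f^*)^{1-s}(g^*)^s$ (up to the dilation $t\mapsto t/2$, which is harmless by \eqref{eq-bX}), approximate $f^*,g^*$ from below by simple decreasing functions, and pass to the limit with the Fatou property.

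To prove the simple-function estimate we build test functions from separated bumps. Write $u=\sum_{i=1}^N \alpha_i{\bf 1}_{[0,t_i)}$ and refine to a common partition, so that $u^{1-s}v^s=\sum_i c_i{\bf 1}_{[\tau_{i-1},\tau_i)}$ with $c_i=a_i^{1-s}b_i^s$, $a_i=u|_{[\tau_{i-1},\tau_i)}$ and $b_i=v|_{[\tau_{i-1},\tau_i)}$. On each piece we take a fixed profile $\varphi\in C_{\rm c}^\infty$ with $\varphi\equiv1$ on $B({\bf0},1)$ and $\operatorname{supp}\varphi\subset B({\bf0},2)$. For each $i$ we place many small copies $a_i\varphi((\cdot-x_{i,\ell})/\lambda_i)$, $\ell=1,\dots,L_i$, with total measure comparable to $\tau_i-\tau_{i-1}$. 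The scale $\lambda_i$ is chosen so that the gradient term matches, namely $a_i\lambda_i^{-k}=b_i$, i.e.\ $\lambda_i=(a_i/b_i)^{1/k}$; the number $L_i$ then fixes the measure. All centers $x_{i,\ell}$ are taken at mutual distance $\ge R$, with $R\to\infty$, and we call the resulting function $f_R$.

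We then compare the norms. By rearrangement invariance and disjointness of supports, $f_R^*\le C u(\cdot/C)$ and $|\nabla^kf_R|^*\lesssim v(\cdot/C)$, so \eqref{eq-bX} gives $\|f_R\|_X\lesssim\|u\|_{\overline X}$ and $\|\nabla^kf_R\|_Y\lesssim\|v\|_{\overline Y}$. For the lower bound on $\mathfrak D^{s,k}_q(f_R)$, fix $x$ in the unit core of a bump at scale $\lambda_i$. Restricting the integral to $|h|\in[4\lambda_i,R/(2k))$ and keeping only $j=0$ in \eqref{eq-sho-hd} (all other points $x+jh$ lie in empty space), we get $|\Delta_h^kf_R(x)|=a_i$. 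Hence $\mathfrak D^{s,k}_q(f_R)(x)\gtrsim a_i\lambda_i^{-sk}(1-o_R(1))=a_i^{1-s}b_i^s(1-o_R(1))$, with constants depending on $s$, which is fixed. The cores have measure comparable to the pieces, so $\mathfrak D^{s,k}_q(f_R)^*\gtrsim (u^{1-s}v^s)(C\cdot)$. Applying \eqref{eq-GN} and \eqref{eq-bX} then yields the simple-function estimate. Part (ii) follows in the same way, using $|\nabla^jf_R|\ge a_i\lambda_i^{-j}|\nabla^j\varphi|$ on a fixed positive-measure subset of each bump, in place of the difference quotient.

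The main obstacle is the bookkeeping of measures: the bump counts $L_i$ must be integers with $L_i\lambda_i^n\sim\tau_i-\tau_{i-1}$, which fails when $\lambda_i^n$ is larger than the interval length. I would try to fix this by first rescaling the whole configuration, using $x\mapsto\mu x$ together with \eqref{eq-bX}, which changes both sides homogeneously. If that is not enough, I would split each level set into small measure pieces and exploit monotonicity, so that a bounded loss factor suffices. I would also check that $f_R\in X\cap\dot W^{k,Y}$, which holds because $f_R$ is a finite smooth compactly supported sum.
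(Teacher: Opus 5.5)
\textbf{A genuine gap at the measure bookkeeping step.} Your construction is essentially the paper's: separated bumps of height $a_i$ at scale $(a_i/b_i)^{1/k}$, with disjointness, \eqref{eq-OX} and \eqref{eq-bX} controlling the norms. Most of your steps are fine. Your pointwise lower bound, obtained by restricting to $4\lambda_i\le|h|<R/(2k)$ where $|\Delta_h^kf_R(x)|=a_i$ exactly, is in fact more direct than the paper's route through $\Phi_{R,L}$ and the error term $\mathcal E_{R,L}$.

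However, the step you flag as the main obstacle is a real gap, and neither remedy you suggest closes it. Dilating $x\mapsto\mu x$ is not homogeneous in a general rearrangement invariant space. \eqref{eq-bX} only gives the one-sided bound $\max\{1,\mu\}$. So if you prove the estimate for $u(\cdot/\mu)$ and $v(\cdot/\mu)$ and transfer it back, you get $\|u^{1-s}v^s\|_{\overline B}\lesssim\mu\|u\|_{\overline X}^{1-s}\|v\|_{\overline Y}^s$. Here $\mu$ depends on $u$ and $v$, so this is not an embedding. In \eqref{eq-GN} itself, only the derivative factors $\mu^{-sk}$ and $(\mu^{-k})^s$ cancel; the dilated norms do not. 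Splitting level sets into smaller pieces makes things worse. The scale $\lambda_i$ is fixed by $a_i/b_i$ alone, so the pieces only shrink relative to $\lambda_i^n$.

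\textbf{The missing idea.} The simple-function inequality is homogeneous in the amplitudes separately: replacing $v$ by $\delta^{-k}v$ multiplies both sides by $\delta^{-sk}$. Concretely, use bumps $a_i\varphi((\cdot-x_{i,\ell})/(\delta\lambda_i))$ with a free small parameter $\delta$.
\begin{itemize}
\item Once $\omega_n(\delta\lambda_i)^n\le\frac12(\tau_i-\tau_{i-1})$ for every $i$, the count $L_i:=\lfloor(\tau_i-\tau_{i-1})/(\omega_n(\delta\lambda_i)^n)\rfloor$ gives total core measure between $\frac12(\tau_i-\tau_{i-1})$ and $\tau_i-\tau_{i-1}$.
\item The heights are still $a_i$, so $\|f\|_X\lesssim\|u\|_{\overline X}$.
\item On the bumps, $|\nabla^kf|\lesssim\delta^{-k}b_i$.
\item On the cores, your lower bound becomes $\mathfrak D^{s,k}_q(f)\gtrsim a_i(\delta\lambda_i)^{-sk}=\delta^{-sk}a_i^{1-s}b_i^s$.
\end{itemize}
Inserting these into \eqref{eq-GN}, the factor $\delta^{-sk}$ on the left cancels $(\delta^{-k})^s$ on the right. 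This is exactly the role of the parameter $R$ in \eqref{eq-mj} and \eqref{eq-R} of the paper.

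With this fix, the rest of your argument goes through. You should also discard pieces where $a_i$ or $b_i$ vanishes, which is harmless by the lattice property. Part (ii) works the same way: you get $|\nabla^jf|\gtrsim\delta^{-j}a_i^{1-j/k}b_i^{j/k}$ on a fixed fraction of each bump. This holds since $\nabla^j\varphi\not\equiv0$ for a nonzero compactly supported $\varphi$, and the $\delta^{-j}$ cancels against $(\delta^{-k})^{j/k}$.
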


\begin{proof}
We first show (i). By the Fatou property and the fact that any non-negative
measurable functions can be approximated from below by an increasing sequence
of simple functions, we find that, to prove $X^{1-s}Y^s\hookrightarrow B$, it
suffices to show that there exists a positive constant $C$ such that, for any
non-negative simple functions, after taking a common refinement of their level
sets, of the form
\begin{align}\label{eq-uv}
u=\sum_{j=1}^N a_j{\bf 1}_{E_j}\quad\text{and}\quad
v=\sum_{j=1}^N b_j{\bf 1}_{E_j},
\end{align}
where $N\in\mathbb{N}$, $\{E_j\}_{j=1}^N$ are mutually disjoint sets of finite
measure, and $\{a_j\}_{j=1}^N$, $\{b_j\}_{j=1}^N$ are positive sequences, one has
\begin{align}\label{eq-gn1}
\left\|u^{1-s}v^s\right\|_B
\le C\|u\|_X^{1-s}\|v\|_Y^s.
\end{align}
Let $u,v$ be the same as in \eqref{eq-uv} and
$\phi\in C_{\rm c}^\infty(B({\bf 0},1))$ satisfy that
\begin{align}\label{eq-phi}
{\bf 1}_{B({\bf 0},\frac12)}\le\phi\le {\bf 1}_{B({\bf 0},1)}.
\end{align}
Then, for any $x\in B({\bf 0},\frac12)$, since
$|x+\ell h|\ge |h|-|x|>1$ for any $\ell\in\{1,\ldots,k\}$ and
$h\in\mathbb{R}^n$ with $|h|\ge2$, it holds that
\begin{align}\label{eq-phi3}
\mathfrak{D}^{s,k}_q(\phi)(x)
\ge
\left(\int_{|h|\ge2}\frac{1}{|h|^{n+skq}}\,dh\right)^{\frac1q}
=:c_{n,k,q,s},
\end{align}
where $c_{n,k,q,s}$ is a positive constant depending only on
$n$, $k$, $q$, and $s$.
For any fixed $R\in(0,\infty)$ and any $j\in\{1,\ldots,N\}$, let
$\lambda_j:=(a_j/b_j)^{\frac1k}$ and
\begin{align}\label{eq-mj}
M_j:=\left\lfloor \frac{|E_j|}{\omega_nR^n\lambda_j^n}\right\rfloor.
\end{align}
Let $I:=\{(j,m)\in\mathbb{N}\times\mathbb{N}:
1\le j\le N,\ 1\le m\le M_j\}$. Then the cardinality
$\# I=\sum_{j=1}^NM_j$ is finite. Fix $L\in(0,\infty)$ and choose points
$\{x_{j,m}\}_{j\in\{1,\ldots,N\},\,m\in\{1,\ldots,M_j\}}$ in $\mathbb{R}^n$
such that, for any $(j_1,m_1),(j_2,m_2)\in I$ with $(j_1,m_1)\ne (j_2,m_2)$, $|x_{j_1,m_1}-x_{j_2,m_2}|\ge L$.
For any $i=(j,m)\in I$, let $c_i:=a_j$, $A_i:=R\lambda_j$, and $x_i:=x_{j,m}$.
For any $R,L\in(0,\infty)$ and $x\in\mathbb{R}^n$, define
\begin{align}\label{eq-fR}
f_{R,L}(x)
:=\sum_{j=1}^Na_j\sum_{m=1}^{M_j}
\phi\left(\frac{x-x_{j,m}}{R\lambda_j}\right)
=\sum_{i\in I}c_i\phi\left(\frac{x-x_i}{A_i}\right)
=:\sum_{i\in I}\phi_i(x).
\end{align}
Then $f_{R,L}\in C_{\rm c}^\infty\subset W^{k,1}_{\rm loc}$. Let
$\rho:=\max_{i\in I}A_i$. We claim that, for any sufficiently small
$R\in(0,\infty)$ and any $L\in(2\rho,\infty)$,
\begin{align*}
\|f_{R,L}\|_X\sim\|u\|_X,
\quad
\left\|\nabla^k f_{R,L}\right\|_Y\lesssim R^{-k}\|v\|_Y,
\end{align*}
and
\begin{align}\label{eq-dsss}
\liminf_{L\to\infty}\left\|\mathfrak{D}^{s,k}_q(f_{R,L})\right\|_B
\gtrsim
R^{-sk}\left\|u^{1-s}v^s\right\|_B,
\end{align}
where the implicit positive constants in the claim are independent of $R$ and
$L$. If this claim holds, then $f_{R,L}\in X\cap\dot W^{k,Y}$. This, together
with the above claim and assumption \eqref{eq-GN}, implies \eqref{eq-gn1}
and hence $X^{1-s}Y^s\hookrightarrow B$. Thus, to complete the proof of (i),
we only need to prove the above claim.

We first show $\|f_{R,L}\|_X\sim\|u\|_X$. By a change of variables, we obtain,
for any $i\in I$ and $\alpha\in(0,\infty)$,
\begin{align}\label{eq-sing}
d_{\phi_i}(\alpha)
=A_i^n\left|\left\{y\in\mathbb{R}^n:\phi(y)>\frac{\alpha}{c_i}\right\}\right|
=A_i^nd_\phi\left(\frac{\alpha}{c_i}\right).
\end{align}
From $L\in(2\rho,\infty)$, we deduce that the supports of
$\{\phi_i\}_{i\in I}$ are pairwise disjoint. By this and \eqref{eq-sing}, we
conclude that, for any $\alpha\in(0,\infty)$,
\begin{align*}
d_{f_{R,L}}(\alpha)
=\sum_{i\in I}d_{\phi_i}(\alpha)
=\sum_{j=1}^NM_j(R\lambda_j)^nd_\phi\left(\frac{\alpha}{a_j}\right).
\end{align*}
From \eqref{eq-mj}, we deduce that
\begin{align*}
\frac{|E_j|}{\omega_n}-(R\lambda_j)^n<M_j(R\lambda_j)^n\le\frac{|E_j|}{\omega_n},
\end{align*}
with $\omega_n:=|B({\bf 0},1)|$. Moreover, using \eqref{eq-phi}, we conclude
that, for any $\alpha\in(0,\infty)$,
\begin{align*}
d_{{\bf 1}_{B({\bf 0},\frac12)}}(\alpha)
\le d_\phi(\alpha)
\le d_{{\bf 1}_{B({\bf 0},1)}}(\alpha).
\end{align*}
From these and a basic calculation, it follows that, for any $R$ satisfying
\begin{align}\label{eq-R}
R^n\le\min\left\{\frac{|E_j|}{2\omega_n\lambda_j^n}:j=1,\ldots,N\right\}
\end{align}
and for any $\alpha\in(0,\infty)$,
\begin{align*}
\frac{1}{2^{n+1}}d_u(\alpha)
&=\sum_{j=1}^N\frac{|E_j|}{2\omega_n}
d_{{\bf 1}_{B({\bf 0},\frac12)}}\left(\frac{\alpha}{a_j}\right)
<d_{f_{R,L}}(\alpha)\le d_u(\alpha).
\end{align*}
This, together with the definition of the non-increasing rearrangement
function [see \eqref{eq-fstar}], further implies that, for any
$t\in(0,\infty)$,
\begin{align}\label{eq-dff}
u^*(2^{n+1}t)\le f_{R,L}^*(t)\le u^*(t).
\end{align}
By this, \eqref{eq-OX}, and \eqref{eq-bX}, we find that, for any $R$
satisfying \eqref{eq-R} and
for any $L\in(2\rho,\infty)$,
\begin{align}\label{eq-fxx}
\|f_{R,L}\|_X=\|f_{R,L}^*\|_{\overline X}
\sim\|u^*\|_{\overline X}=\|u\|_X
\end{align}
with the positive equivalence constants independent of both $R$ and $L$.

Next, we prove $\left\|\nabla^k f_{R,L}\right\|_Y\lesssim R^{-k}\|v\|_Y$.
From \eqref{eq-fR} and $\lambda_j^k=a_j/b_j$, we infer that, for any
$x\in\mathbb{R}^n$,
\begin{align*}
\left|\nabla^k f_{R,L}(x)\right|
&\lesssim
\sum_{j=1}^N\frac{b_j}{R^k}
\sum_{m=1}^{M_j}{\bf 1}_{B(x_{j,m},R\lambda_j)}(x).
\end{align*}
Similar to the proof of \eqref{eq-dff}, we conclude that, for any
$R\in(0,\infty)$, $L\in(2\rho,\infty)$, and $t\in(0,\infty)$,
\begin{align*}
\left|\nabla^k f_{R,L}\right|^*(t)
\lesssim
\left(\frac{v}{R^k}\right)^*(t).
\end{align*}
Thus,
\begin{align}\label{eq-fYY}
\left\|\nabla^k f_{R,L}\right\|_Y\lesssim R^{-k}\|v\|_Y
\end{align}
with the implicit positive constant independent of both $R$ and $L$.

Finally, we show \eqref{eq-dsss}. For any $R,L\in(0,\infty)$ and
$x\in\mathbb{R}^n$, let
\begin{align*}
\Phi_{R,L}(x)
&:=\sum_{i\in I}\mathfrak{D}^{s,k}_q(\phi_i)(x)
{\bf 1}_{\bigcup_{i\in I}B(x_i,2\rho)}(x)
=\sum_{j=1}^N\sum_{m=1}^{M_j}
\frac{a_j^{1-s}b_j^s}{R^{sk}}
\mathfrak{D}^{s,k}_q\phi\left(\frac{x-x_{j,m}}{R\lambda_j}\right)
{\bf 1}_{\bigcup_{i\in I}B(x_i,2\rho)}(x)
\end{align*}
and
\begin{align}\label{eq-EEE}
\mathcal{E}_{R,L}(x)
:=\mathfrak{D}^{s,k}_q(f_{R,L})(x){\bf 1}_{\bigcup_{i\in I}B(x_i,2\rho)}(x)
-\Phi_{R,L}(x).
\end{align}
To prove \eqref{eq-dsss}, we first show that, for any $R\in(0,\infty)$,
$L\in(6k\rho,\infty)$, and $x\in\mathbb{R}^n$,
\begin{align}\label{eq-aal1}
\left|\mathcal{E}_{R,L}(x)\right|
\lesssim
L^{-\frac{n}{q}-sk}{\bf 1}_{\bigcup_{i\in I}B(x_i,2\rho)}(x),
\end{align}
where the implicit positive constant may depend on $R$, $u$, and $v$, but is
independent of $L$. Fix $R\in(0,\infty)$ and $L\in(6k\rho,\infty)$ and assume
$x\in B(x_{i_0},2\rho)$ for some $i_0\in I$. From ${\rm supp\,}(\phi)\subset B({\bf 0},1)$,
we infer that, for any $i\in I$, ${\rm supp\,}(\phi_i)\subset B(x_i,A_i)\subset B(x_i,\rho)$.
For any $i\in I$ with $i\ne i_0$, by this and the assumption $|x_i-x_{i_0}|\ge L$, we
have $|x-x_i|\ge L-2\rho>4\rho$ and hence
\begin{align}\label{eq-gix}
\phi_i(x)=0.
\end{align}
Moreover, when $|h|\le L/(2k)$, for any $\ell\in\{1,\ldots,k\}$ and
$i\ne i_0$,
\begin{align*}
|x+\ell h-x_i|
\ge |x_i-x_{i_0}|-|x-x_{i_0}|-\ell|h|
\ge L-2\rho-\frac L2>\rho.
\end{align*}
Thus, the terms generated by the bumps $\{\phi_i\}_{i\ne i_0}$ vanish in
$\Delta_h^kf_{R,L}(x)$ for $|h|\le L/(2k)$. Using this and Minkowski's inequality,
we find that
\begin{align}\label{eq-j1j2}
&\left|\mathfrak{D}^{s,k}_q(f_{R,L})(x)-\mathfrak{D}^{s,k}_q(\phi_{i_0})(x)\right|^q
\lesssim
\int_{|h|>\frac L{2k}}
\frac{\left|\sum_{i\ne i_0}\Delta_h^k\phi_i(x)\right|^q}{|h|^{n+skq}}\,dh.
\end{align}
By the support condition, \eqref{eq-gix}, the change of variables
$y=x+\ell h$ for $\ell\in\{1,\ldots,k\}$, and the finiteness of $I$, we obtain
\begin{align*}
&\int_{|h|>\frac L{2k}}
\frac{\left|\sum_{i\ne i_0}\Delta_h^k\phi_i(x)\right|^q}{|h|^{n+skq}}\,dh
\lesssim
L^{-n-skq}\sum_{i\ne i_0}|a_i|^qA_i^n
\lesssim L^{-n-skq},
\end{align*}
where the implicit positive constant may depend on $R$, $u$, and $v$, but is
independent of $L$. Applying this and \eqref{eq-j1j2}, we conclude that
\begin{align}\label{eq-gk}
\left|\mathfrak{D}^{s,k}_q(f_{R,L})(x)-\mathfrak{D}^{s,k}_q(\phi_{i_0})(x)\right|
\lesssim L^{-\frac{n}{q}-sk}.
\end{align}
Similarly, from \eqref{eq-gix}, we deduce that, for any $x\in B(x_{i_0},2\rho)$,
\begin{align*}
\sum_{i\ne i_0}\mathfrak{D}^{s,k}_q(\phi_i)(x)
\lesssim L^{-\frac{n}{q}-sk},
\end{align*}
where the implicit positive constant may depend on $R$, $u$, and $v$, but is
independent of $L$. By this and \eqref{eq-gk}, we obtain, for any
$x\in B(x_{i_0},2\rho)$,
\begin{align*}
\left|\mathfrak{D}^{s,k}_q(f_{R,L})(x)-\sum_{i\in I}\mathfrak{D}^{s,k}_q(\phi_i)(x)\right|
\lesssim L^{-\frac{n}{q}-sk}.
\end{align*}
This proves \eqref{eq-aal1}.

Consequently, by \eqref{eq-aal1}, we find that, for any fixed $R$ and
$t\in(0,\infty)$,
\begin{align*}
\mathcal{E}_{R,L}^*(t)
\lesssim
L^{-\frac{n}{q}-sk}{\bf 1}_{(0,\,\# I\omega_n(2\rho)^n)}(t),
\end{align*}
where the implicit positive constant is independent of $L$. From this and
\eqref{eq-OX}, it follows that, for any fixed $R$,
\begin{align*}
\|\mathcal{E}_{R,L}\|_B
=\|\mathcal{E}_{R,L}^*\|_{\overline B}
\lesssim
L^{-\frac{n}{q}-sk}\left\|{\bf 1}_{(0,\,\# I\omega_n(2\rho)^n)}\right\|_{\overline B}
\to0
\end{align*}
as $L\to\infty$. This, together with \eqref{eq-EEE}, further implies that
\begin{align}\label{eq-key}
\liminf_{L\to\infty}
\left\|\mathfrak{D}^{s,k}_q(f_{R,L}){\bf 1}_{\bigcup_{i\in I}B(x_i,2\rho)}\right\|_B
=
\liminf_{L\to\infty}\|\Phi_{R,L}\|_B.
\end{align}
Moreover, using \eqref{eq-phi3}, we obtain, for any $R,L\in(0,\infty)$ and
$x\in\mathbb{R}^n$,
\begin{align*}
\Phi_{R,L}(x)
&\ge c_{n,k,q,s}\sum_{j=1}^N\sum_{m=1}^{M_j}
\frac{a_j^{1-s}b_j^s}{R^{sk}}
{\bf 1}_{B({\bf 0},\frac12)}\left(\frac{x-x_{j,m}}{R\lambda_j}\right)
=:\Psi_{R,L}(x).
\end{align*}
Similar to the proof of \eqref{eq-dff}, we conclude that, for any $R$ satisfying
\eqref{eq-R}, $L\in(6k\rho,\infty)$, and $t\in(0,\infty)$,
\begin{align*}
\Phi_{R,L}^*(t)\ge\Psi_{R,L}^*(t)
\ge
\left(\frac{c_{n,k,q,s}u^{1-s}v^s}{R^{sk}}\right)^*(2^{n+1}t).
\end{align*}
This, combined with \eqref{eq-OX}, further implies that
\begin{align*}
\liminf_{L\to\infty}\|\Phi_{R,L}\|_B
=\liminf_{L\to\infty}\|\Phi_{R,L}^*\|_{\overline B}
\gtrsim
R^{-sk}\|(u^{1-s}v^s)^*\|_{\overline B}
=R^{-sk}\|u^{1-s}v^s\|_B
\end{align*}
with the implicit positive constant independent of $R$. From this, the lattice
property, and \eqref{eq-key}, we deduce \eqref{eq-dsss}. Combining this,
\eqref{eq-fxx}, and \eqref{eq-fYY}, we find that the above claim holds. This
completes the proof of (i).

Finally, we show (ii), whose proof is similar to (i) and even simpler.
Indeed, by an argument similar to that used above, we only need to prove that \eqref{eq-gn1} holds
with $s=\frac jk$ for all simple functions $u$ and $v$ as in \eqref{eq-uv}.
Choose $\eta\in C_{\rm c}^\infty(B({\bf 0},1))$ such that
$\eta=1$ on $B({\bf 0},\frac12)$ and, for any $x:=(x_1,\ldots,x_n)\in\mathbb{R}^n$, define
$\psi(x):=\frac{x_1^j}{j!}\eta(x).$
Then $|\nabla^j\psi|\ge {\bf 1}_{B({\bf 0},\frac12)}.$
For any $R,L\in(0,\infty)$, let
$g_{R,L}$ be as in \eqref{eq-fR} with
$\lambda_\ell:=a_\ell^{\frac1k}b_\ell^{-\frac1k}$ and $\phi$ replaced by $\psi$.
Then, for any $x\in\mathbb{R}^n$,
\begin{align*}
\left|\nabla^j g_{R,L}(x)\right|
&=
\frac1{R^j}
\sum_{\ell=1}^N a_\ell^{1-\frac{j}{k}}b_\ell^{\frac{j}{k}}
\sum_{m=1}^{M_\ell}
\left|\nabla^j\psi\left(
\frac{x-x_{\ell,m}}{R\lambda_\ell}
\right)\right|
\end{align*}
and
\begin{align*}
\left|\nabla^k g_{R,L}(x)\right|
&\lesssim
\frac1{R^k}
\sum_{\ell=1}^N b_\ell
\sum_{m=1}^{M_\ell}
{\bf 1}_{B(x_{\ell,m},R\lambda_\ell)}(x).
\end{align*}
Similar to the proofs of \eqref{eq-gn1} and \eqref{eq-fxx}, we conclude that,
for any sufficiently small $R\in(0,\infty)$ and any sufficiently
large $L\in(0,\infty)$,
\begin{align*}
\|g_{R,L}\|_X\sim\|u\|_X,
\quad
\left\|\nabla^k g_{R,L}\right\|_Y\lesssim R^{-k}\|v\|_Y,
\end{align*}
and
\begin{align*}
\left\|\nabla^j g_{R,L}\right\|_B
\gtrsim
R^{-j}\left\|u^{1-\frac jk}v^{\frac jk}\right\|_B,
\end{align*}
where the implicit positive constants are independent of both $R$ and $L$.
Applying this and \eqref{eq-GN-integer} to $g_{R,L}$, we obtain that
\eqref{eq-gn1} holds with $s=\frac jk$ and hence (ii). This
then completes the proof of Theorem \ref{thmOP}.
\end{proof}

With the help of Theorem \ref{thmOP}, we show Theorem \ref{thm-fs}.
\begin{proof}[Proof of Theorem \ref{thm-fs}]
The proof of the estimates is similar to that of Theorem \ref{thmGNS}: after
choosing suitable convexification indices, we apply Theorem \ref{thm-ps} and
then use the Calder\'on--Lozanovski\u{\i} construction together with the boundedness
of the Hardy--Littlewood maximal operator.

Let $f\in \dot W^{k,Y}$. Since $p_Y>1$ in both (i) and (ii), from Remark
\ref{rem-Boyd}(ii), it follows that
$f\in\bigcap_{1\le p<p_Y}W^{k,p}_{\rm loc}$.

We first prove (i). For any $t\in[0,1]$, define
$F(t):=n(\frac{1-t}{p_X}+\frac{t}{p_Y}-\frac1q)-tk$. By the assumption
$F(s)<0$ and the continuity of $F$, we find that there exist
$s_0,\widetilde{s_0}\in(0,1)$ such that $s_0<s<\widetilde{s_0}$,
$F(s_0)<0$, and $F(\widetilde{s_0})<0$. As in the proof of Theorem
\ref{thmGNS}, we can choose $p_1\in(1,p_X)$ and $p_2\in(1,p_Y)$ satisfying
that
\begin{align*}
n\left(\frac{1-s_0}{p_1}+\frac{s_0}{p_2}-\frac1q\right)<s_0k
\quad
\text{and}
\quad
n\left(\frac{1-\widetilde{s_0}}{p_1}
+\frac{\widetilde{s_0}}{p_2}-\frac1q\right)<\widetilde{s_0}k.
\end{align*}
Applying Theorem \ref{thm-ps}(i) with these parameters and arguing as in the
proof of Theorem \ref{thmGNS}(i), we obtain \eqref{eq-XYss}.

Next, for any $t\in[0,1]$, define
$G(t):=n(\frac{t}{p_Y}-\frac1q)-tk$. By the assumption $G(s)<0$ and the
continuity of $G$, we can choose $s_0,\widetilde{s_0}\in(0,1)$ such that
$s_0<s<\widetilde{s_0}$, $G(s_0)<0$, and $G(\widetilde{s_0})<0$. As above,
we can choose $p_1\in(1,\infty)$ large enough and $p_2\in(1,p_Y)$
sufficiently close to $p_Y$ such that
\begin{align*}
n\left(\frac{1-s_0}{p_1}+\frac{s_0}{p_2}-\frac1q\right)<s_0k
\quad
\text{and}
\quad
n\left(\frac{1-\widetilde{s_0}}{p_1}
+\frac{\widetilde{s_0}}{p_2}-\frac1q\right)<\widetilde{s_0}k.
\end{align*}
Applying Theorem \ref{thm-ps}(i) with these parameters and using the BMO
argument in the proof of Theorem \ref{thmGNS}(ii), Remark \ref{rem-cp}(iii),
and the boundedness of $M$ on $Y^{\frac1{p_2}}$, we obtain \eqref{eq-BMO-fs}.

It remains to show the optimality statements. The sufficiency of
\eqref{eq-OPP-ri} follows from \eqref{eq-XYss} and the embedding
$X^{1-s}Y^s\hookrightarrow B$. Conversely, if \eqref{eq-OPP-ri} holds, then
Theorem \ref{thmOP}(i) implies $X^{1-s}Y^s\hookrightarrow B$.

Similarly, the sufficiency of \eqref{eq-OPP-bmo} follows from
\eqref{eq-BMO-fs} and the embedding $Y^{\frac1s}\hookrightarrow B$. Conversely,
suppose that \eqref{eq-OPP-bmo} holds. Since
$L^\infty\hookrightarrow\operatorname{BMO}$, it follows that
\begin{align*}
\left\|\mathfrak D_q^{s,k}(f)\right\|_B
\lesssim
\|f\|_{L^\infty}^{1-s}
\left\|\nabla^k f\right\|_Y^s
\end{align*}
for any $f\in L^\infty\cap\dot W^{k,Y}$. Applying Theorem \ref{thmOP}(i) with
$X=L^\infty$ and using Remark \ref{rem-cp}(iii), we conclude that
$Y^{\frac1s}\hookrightarrow B$. This then completes the proof of
Theorem \ref{thm-fs}.
\end{proof}

\section{Sharpness of Assumptions on
\texorpdfstring{$q$}{q}
in Theorems \ref{thmGNS} and \ref{thm-fs}}\label{sec-sharp}

In this section, we establish the sharpness of the
assumptions on $q$ in
Theorems \ref{thmGNS} and \ref{thm-fs}. In Subsection \ref{sec-Es}, we prove that, for each
fixed $s\in(0,1)$, the assumptions on $q$ in both parts of Theorem
\ref{thm-fs} are sharp. In Subsection \ref{sec-cG},
we show that the assumption
$n(\frac{1}{p_Y}-\frac{1}{q})<k$
in Theorem \ref{thmGNS} is sharp. Finally, in Subsection
\ref{sec-cqpx}, even when this assumption is
satisfied, we prove that estimate \eqref{eq-gns2} may fail if $q\ge p_X$.

We first establish the following lower estimate
for difference operators, which is frequently used in this
section.

\begin{lemma}\label{lem-sharp-tail}
Let $k\in\mathbb{N}$ and $q\in[1,\infty)$. There exists a nonzero
function $\phi\in C_{\rm c}^{\infty}(B({\bf 0},2))$ such that, for any
$s\in(0,1)$ and $x\in\mathbb{R}^n$ with $|x|\ge4k$,
\begin{align}\label{eq-sharp-tail}
\mathfrak D_q^{s,k}(\phi)(x)
&\gtrsim
|x|^{-\frac nq-sk},
\end{align}
where the implicit positive constant is independent of $s$ and $x$.
\end{lemma}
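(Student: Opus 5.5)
We propose to take $\phi\in C_{\rm c}^\infty(B({\bf 0},2))$ nonnegative with $\phi\ge{\bf 1}_{B({\bf 0},1)}$, exactly as in \eqref{eq-phi} after rescaling. Fix $x$ with $|x|\ge4k$. The idea is to integrate only over those $h$ for which one of the points $x+\ell h$ lands in $B({\bf 0},1)$ while all the other points $x+jh$, $j\ne \ell$, lie outside $B({\bf 0},2)$. For such $h$, by \eqref{eq-sho-hd}, $|\Delta_h^k\phi(x)|=\binom{k}{\ell}\phi(x+\ell h)\ge1$.

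The natural choice is $\ell=k$. Consider the set
\begin{align*}
H_x:=\left\{h\in\mathbb{R}^n: x+kh\in B({\bf 0},1)\right\}=\frac{B({\bf 0},1)-\{x\}}{k},
\end{align*}
a ball of radius $1/k$ centered at $-x/k$. For $h\in H_x$ we have $|h|\ge (|x|-1)/k\ge |x|/(2k)$ and also $|h|\le (|x|+1)/k\le 2|x|/k$. For $j\in\{0,\ldots,k-1\}$ we write $x+jh=(x+kh)-(k-j)h$, so that
\begin{align*}
|x+jh|\ge (k-j)|h|-1\ge |h|-1\ge \frac{|x|}{2k}-1\ge 1.
\end{align*}
This is not quite $\ge2$, so we sharpen the parameters: either use $\phi\ge{\bf 1}_{B({\bf 0},1/2)}$ with support in $B({\bf 0},1)$ (then $\operatorname{supp}\phi\subset B({\bf 0},2)$ still holds), or use the constraint $|x|\ge4k$ more carefully. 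With $x+kh\in B({\bf 0},\frac12)$ we get $|h|\ge(|x|-\frac12)/k\ge 4-\frac1{2k}$, hence $|x+jh|\ge |h|-\frac12\ge 3$, which exceeds $1$, the support radius. Hence $\phi(x+jh)=0$ for $j<k$, and therefore $|\Delta_h^k\phi(x)|=\phi(x+kh)\ge1$ on the ball $H_x':=(B({\bf 0},\frac12)-\{x\})/k$.

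We then conclude directly:
\begin{align*}
\left[\mathfrak D_q^{s,k}(\phi)(x)\right]^q\ge\int_{H_x'}\frac{dh}{|h|^{n+skq}}\ge |H_x'|\left(\frac{2|x|}{k}\right)^{-n-skq}\gtrsim |x|^{-n-skq},
\end{align*}
where $|H_x'|=\omega_n(2k)^{-n}$ and $(2/k)^{-skq}\ge \min\{1,(k/2)^{kq}\}$ uniformly in $s\in(0,1)$. Taking $q$-th roots gives \eqref{eq-sharp-tail} with a constant depending only on $n,k,q$.

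The only delicate step is the geometric check that, for $h\in H_x'$, none of the other nodes $x+jh$ hits the support. This is where the hypothesis $|x|\ge4k$ enters, and choosing the support radii with some slack (inner radius $\frac12$, support radius $1$) makes it routine. Uniformity in $s$ is automatic, since $|h|$ is comparable to $|x|$ with $s$-independent constants.
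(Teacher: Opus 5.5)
Your proof is correct and is essentially the paper's argument: the same set $H_x=\{h: x+kh\in B({\bf 0},1)\}$, the observation that for $h\in H_x$ only the $j=k$ term of $\Delta_h^k\phi(x)$ survives, and the bound $|h|\sim|x|$ on a set of measure comparable to $k^{-n}$. The rescaling to radii $\frac12$ and $1$ was not needed. If you keep $|h|\ge(|x|-1)/k$ instead of weakening it to $|x|/(2k)$, you already get $|x+jh|\ge 3-\frac1k\ge2$ for $j<k$, so the original choice $\phi\ge{\bf 1}_{B({\bf 0},1)}$ with support in $B({\bf 0},2)$ works directly. The paper reaches the same conclusion (with $\ge 3+\frac1k$) via the identity $x+jh=\frac{k-j}{k}x+\frac jk(x+kh)$.
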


\begin{proof}
Let $\phi\in C_{\rm c}^{\infty}(B({\bf 0},2))$ be nonnegative and
satisfy $\phi=1$ in $B({\bf 0},1)$. Fix $|x|\ge4k$ and define
\begin{align*}
H_x
:=
\left\{h\in\mathbb{R}^n:x+kh\in B({\bf 0},1)\right\}.
\end{align*}
For any $h\in H_x$, we have
\begin{align*}
\frac{|x|-1}{k}
\le |h|
\le
\frac{|x|+1}{k},
\end{align*}
and hence $|h|\sim|x|$. Moreover, for any
$j\in\{0,\ldots,k-1\}$,
\begin{align*}
|x+jh|
&=
\left|
\frac{k-j}{k}x+\frac{j}{k}(x+kh)
\right|\ge
\frac{k-j}{k}|x|-\frac{j}{k}|x+kh|\ge
\frac{|x|-(k-1)}{k}
>2.
\end{align*}
Thus, $\phi(x+jh)=0$ for $j\in\{0,\ldots,k-1\}$ and
$\phi(x+kh)=1$, which imply that
$|\Delta_h^k\phi(x)|=1.$
Since $|H_x|=k^{-n}|B({\bf 0},1)|$, we deduce that
\begin{align*}
\mathfrak D_q^{s,k}(\phi)(x)
&\ge
\left[
\int_{H_x}\frac{dh}{|h|^{n+skq}}
\right]^{\frac1q}
\gtrsim
|x|^{-\frac nq-sk},
\end{align*}
which shows \eqref{eq-sharp-tail} and completes the proof of Lemma
\ref{lem-sharp-tail}.
\end{proof}

\subsection{The Assumption in Theorem \ref{thm-fs} for Each
\texorpdfstring{$s$}{s}}\label{sec-Es}

\begin{proposition}\label{prop-sharp-fixed-s5}
Let $k\in\mathbb{N}$, $s\in(0,1)$,
$p_1\in(1,\infty]$, $p_2\in(1,\infty)$,
$p_s\in(1,\infty)$ satisfy
$\frac1{p_s}
=
\frac{1-s}{p_1}+\frac{s}{p_2},$
and
$q\in[1,\infty)$.
If
\begin{align}\label{eq-sharp-fixed-condition}
n\left(\frac1{p_s}-\frac1q\right)
\ge sk,
\end{align}
then there exists a nonzero function
$\phi\in C_{\rm c}^{\infty}$ such that
$\|\mathfrak D_q^{s,k}(\phi)\|_{L^{p_s}}
=
\infty.$
Consequently, the assumption on $q$ in Theorem
\ref{thm-fs}{\rm(i)} is sharp in general. Taking $p_1=\infty$ also
proves that the assumption on $q$ in Theorem
\ref{thm-fs}{\rm(ii)} is sharp in general.
\end{proposition}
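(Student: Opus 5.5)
We will take $\phi$ to be the function from Lemma \ref{lem-sharp-tail}, that is, a nonzero function in $C_{\rm c}^\infty(B({\bf 0},2))$. The idea is that the tail bound \eqref{eq-sharp-tail} alone forces $\mathfrak D_q^{s,k}(\phi)$ out of $L^{p_s}$ under \eqref{eq-sharp-fixed-condition}. By Lemma \ref{lem-sharp-tail}, for every $x$ with $|x|\ge4k$ we have $\mathfrak D_q^{s,k}(\phi)(x)\gtrsim|x|^{-\frac nq-sk}$. Hence, integrating in polar coordinates,
\begin{align*}
\left\|\mathfrak D_q^{s,k}(\phi)\right\|_{L^{p_s}}^{p_s}
\gtrsim\int_{|x|\ge4k}|x|^{-(\frac nq+sk)p_s}\,dx
\sim\int_{4k}^\infty r^{\,n-1-(\frac nq+sk)p_s}\,dr .
\end{align*}
This radial integral diverges exactly when $(\frac nq+sk)p_s\le n$.

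Dividing that condition by $p_s>0$ turns it into $\frac nq+sk\le\frac n{p_s}$, which is the same as $n(\frac1{p_s}-\frac1q)\ge sk$, i.e.\ hypothesis \eqref{eq-sharp-fixed-condition}. So the norm is infinite, which proves the main claim. The only point to watch is that $p_s\in(1,\infty)$ is finite, which the statement assumes, so the $L^{p_s}$ integral is the relevant quantity.

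For the consequences, we specialize to $X=L^{p_1}$ and $Y=L^{p_2}$, with $p_1\in(1,\infty)$ for part (i). Then $p_X=p_1$ and $p_Y=p_2$, and $X^{1-s}Y^s=L^{p_s}$ by the classical Calder\'on product identity. Also $\phi\in X\cap\dot W^{k,Y}$ with finite right-hand side in \eqref{eq-XYss}, while the left-hand side is infinite. So \eqref{eq-XYss} fails, while the left side of the assumption in Theorem \ref{thm-fs}(i) equals $n(\frac1{p_s}-\frac1q)$, and the condition $\ge sk$ is precisely the excluded range. For (ii) we take $p_1=\infty$. Then $\frac1{p_s}=\frac s{p_2}$ and $Y^{\frac1s}=L^{p_2/s}=L^{p_s}$. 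Moreover $\phi\in L^\infty\subset\operatorname{BMO}$ has finite right-hand side in \eqref{eq-BMO-fs}, while the left-hand side is infinite.

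There is essentially no obstacle here. The step that needs the most care is this last bookkeeping: checking that the Calder\'on--Lozanovski\u{\i} product of Lebesgue spaces is $L^{p_s}$ with equivalent norm (a H\"older argument in one direction and the choice $f=|h|^{p_s/p_1}$, $g=|h|^{p_s/p_2}$ in the other), and that the Boyd indices of $L^{p}$ equal $p$.
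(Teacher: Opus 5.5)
Your proof is correct and follows the paper's argument: you take $\phi$ from Lemma \ref{lem-sharp-tail}, integrate the tail bound $|x|^{-\frac nq-sk}$ in polar coordinates over $|x|\ge4k$, and note that divergence is equivalent to $n(\frac1{p_s}-\frac1q)\ge sk$. The sharpness consequences then follow from the right-hand sides being finite for $\phi\in C_{\rm c}^\infty$. Your extra bookkeeping, namely $(L^{p_1})^{1-s}(L^{p_2})^s=L^{p_s}$, $p_{L^p}=p$, and $Y^{\frac1s}=L^{p_2/s}=L^{p_s}$ when $p_1=\infty$, is left implicit in the paper but is accurate.
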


\begin{proof}
Let $\phi$ be the function as in Lemma \ref{lem-sharp-tail}. From
\eqref{eq-sharp-tail}, we infer that
\begin{align*}
\left\|\mathfrak D_q^{s,k}(\phi)\right\|_{L^{p_s}}^{p_s}
&\gtrsim
\int_{|x|\ge4k}
|x|^{-p_s(\frac nq+sk)}\,dx\sim
\int_{4k}^{\infty}
r^{-1-p_s(sk+\frac nq-\frac n{p_s})}\,dr.
\end{align*}
Condition \eqref{eq-sharp-fixed-condition} is equivalent to
$
sk+\frac nq-\frac n{p_s}\le0.$
Therefore, the last integral diverges and hence
\begin{align*}
\left\|\mathfrak D_q^{s,k}(\phi)\right\|_{L^{p_s}}
=
\infty.
\end{align*}
On the other hand, since
$\phi\in C_{\rm c}^{\infty}$, it follows that
$
\|\phi\|_{L^{p_1}}
+
\|\nabla^k\phi\|_{L^{p_2}}
<\infty.$
Thus, the right-hand side of the estimate in Theorem
\ref{thm-fs}{\rm(i)} is finite, whereas its left-hand side is infinite.
This shows the sharpness assertion for Theorem
\ref{thm-fs}{\rm(i)}.

Finally, if $p_1=\infty$, then
$
\frac1{p_s}=\frac{s}{p_2}$
and $L^{p_s}=L^{\frac{p_2}{s}}.$
Moreover,
$\phi\in\operatorname{BMO}$ and
$\nabla^k\phi\in L^{p_2}$. Hence, the right-hand side of
the estimate in Theorem \ref{thm-fs}{\rm(ii)} is finite, while its
left-hand side is again infinite whenever
$n(\frac{s}{p_2}-\frac1q)\ge sk.$
This then completes the proof of Proposition
\ref{prop-sharp-fixed-s5}.
\end{proof}

\subsection{The Assumption in Theorem \ref{thmGNS} as \texorpdfstring{$s\to1^-$}{s to 1}
}\label{sec-cG}

\begin{proposition}\label{prop-sharp-fixed-s}
Let $k\in\mathbb{N}$,
$p_1\in(1,\infty]$, $p_2\in(1,\infty)$, and
$q\in[1,\infty)$.
For any  $s\in(0,1)$, let
$p_s\in(1,\infty)$ satisfy
$\frac1{p_s}
=
\frac{1-s}{p_1}+\frac{s}{p_2}$.
If
\begin{align*}
n\left(\frac1{p_2}-\frac1q\right)
&\ge k,
\end{align*}
then there exists a nonzero function
$\phi\in C_{\rm c}^{\infty}$ for which there do not exist
$C\in(0,\infty)$ and $s_0\in(0,1)$ such that, for every
$s\in(s_0,1)$,
\begin{align}\label{eq-sharp-near-one-fail}
\left\|\mathfrak D_q^{s,k}(\phi)\right\|_{L^{p_s}}
&\le
C(1-s)^{-\frac1q}
\|\phi\|_{L^{p_1}}^{1-s}
\left\|\nabla^k\phi\right\|_{L^{p_2}}^s.
\end{align}
In particular, when $p_1=\infty$, the same conclusion holds with
$\|\phi\|_{L^\infty}^{1-s}$ replaced by
$\|\phi\|_{\operatorname{BMO}}^{1-s}$.
\end{proposition}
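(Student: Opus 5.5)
We take $\phi$ to be the function from Lemma \ref{lem-sharp-tail}. The plan is to bound $\|\mathfrak D_q^{s,k}(\phi)\|_{L^{p_s}}$ from below by its tail contribution on $\{|x|\ge4k\}$ and to compare the resulting growth in $s$ with the right-hand side of \eqref{eq-sharp-near-one-fail}. That right-hand side is at most $C'(1-s)^{-\frac1q}$ uniformly in $s\in(0,1)$: indeed $\|\phi\|_{L^{p_1}}$ and $\|\nabla^k\phi\|_{L^{p_2}}$ are fixed positive finite numbers, and their powers $1-s$ and $s$ stay bounded.

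By \eqref{eq-sharp-tail} and polar coordinates,
\begin{align*}
\left\|\mathfrak D_q^{s,k}(\phi)\right\|_{L^{p_s}}^{p_s}
\gtrsim\int_{4k}^\infty r^{-1-p_s\delta(s)}\,dr,
\qquad
\delta(s):=sk+\frac nq-\frac n{p_s}.
\end{align*}
Using $\frac1{p_s}=\frac{1-s}{p_1}+\frac s{p_2}$, we get $\delta(s)\to k+\frac nq-\frac n{p_2}\le0$ as $s\to1^-$. We then split into two cases.

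\emph{Strict case: $n(\frac1{p_2}-\frac1q)>k$.} By continuity, $\delta(s)<0$ for every $s$ close to $1$. The integral then diverges, so the left-hand side of \eqref{eq-sharp-near-one-fail} is infinite while the right-hand side is finite. This contradicts \eqref{eq-sharp-near-one-fail} for any $C$ and $s_0$.

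\emph{Equality case: $k=\frac n{p_2}-\frac nq$.} Substituting this into $\delta(s)$ gives the exact identity
\begin{align*}
\delta(s)=(1-s)\left(\frac nq-\frac n{p_1}\right).
\end{align*}
If $\frac nq\le\frac n{p_1}$, then $\delta(s)\le0$ and the norm is again infinite, so we conclude as in the strict case. Otherwise set $c:=\frac nq-\frac n{p_1}>0$. The integral equals $(4k)^{-p_s\delta(s)}/[p_s\delta(s)]\sim(1-s)^{-1}$ uniformly for $s$ near $1$, and therefore
\begin{align*}
\left\|\mathfrak D_q^{s,k}(\phi)\right\|_{L^{p_s}}\gtrsim(1-s)^{-\frac1{p_s}}.
\end{align*}
It remains to show that $(1-s)^{-\frac1{p_s}}\big/(1-s)^{-\frac1q}\to\infty$. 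We write
\begin{align*}
(1-s)^{-\frac1{p_s}}=(1-s)^{-\frac1{p_2}}\,(1-s)^{-\frac{1-s}{p_1}+\frac{1-s}{p_2}},
\end{align*}
and the second factor tends to $1$ because $(1-s)\log(1-s)\to0$. In the equality case $\frac1{p_2}-\frac1q=\frac kn>0$, so the ratio behaves like $(1-s)^{-\frac kn}\to\infty$. This contradicts \eqref{eq-sharp-near-one-fail} for every choice of $C$ and $s_0$.

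For the final assertion with $p_1=\infty$, we use $0<\|\phi\|_{\operatorname{BMO}}\le2\|\phi\|_{L^\infty}<\infty$. The right-hand side is then still $\lesssim(1-s)^{-\frac1q}$, and the same lower bounds apply verbatim; note that here $c=\frac nq>0$. The main point needing care is the equality case: there the blow-up comes from the degeneration $\delta(s)\sim(1-s)$ of the tail integrability exponent. One must check that the resulting rate $(1-s)^{-\frac1{p_s}}$ beats $(1-s)^{-\frac1q}$, and this relies exactly on $\frac1{p_2}>\frac1q$.
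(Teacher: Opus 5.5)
Your proposal is correct and follows essentially the same route as the paper. Both use the test function from Lemma \ref{lem-sharp-tail} and the tail exponent $sk+\frac nq-\frac n{p_s}$, and both split into the strict case and the equality case, the latter subdivided via the identity $(1-s)\,n(\frac1q-\frac1{p_1})$ into $p_1\le q$ (infinite norm) and $p_1>q$ (lower bound $(1-s)^{-\frac1{p_s}}$, which beats $(1-s)^{-\frac1q}$ because $\frac1{p_2}-\frac1q=\frac kn>0$). Both then handle the BMO assertion through $\|\phi\|_{\operatorname{BMO}}\lesssim\|\phi\|_{L^\infty}$.
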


\begin{proof}
Let $\phi$ be as in Lemma \ref{lem-sharp-tail}. For any $s\in(0,1)$,
let
\begin{align*}
\eta_s
&:=
sk+\frac nq-\frac n{p_s}.
\end{align*}
By \eqref{eq-sharp-tail}, for any $s\in(0,1)$,
\begin{align}\label{eq-sharp-near-one-integral}
\left\|\mathfrak D_q^{s,k}(\phi)\right\|_{L^{p_s}}^{p_s}
&\gtrsim
\int_{4k}^{\infty}r^{-1-p_s\eta_s}\,dr,
\end{align}
where the implicit positive constant is independent of $s$. We now consider
the following three cases.

\emph{Case 1: $n(\frac1{p_2}-\frac1q)>k$.}
In this case,
$
\eta_1
=
k+\frac nq-\frac n{p_2}<0.$
From the continuity of $s\mapsto\eta_s$, we deduce that
$\eta_s<0$ for every $s$ sufficiently close to $1$. This, together
with \eqref{eq-sharp-near-one-integral}, further implies that
\begin{align}\label{eq-sharp-near-one-growth-2}
\left\|\mathfrak D_q^{s,k}(\phi)\right\|_{L^{p_s}}
&=\infty
\end{align}
for every such $s$. On the other hand, since
$\phi\in C_{\rm c}^{\infty}$,
$
\|\phi\|_{L^{p_1}}^{1-s}
\|\nabla^k\phi\|_{L^{p_2}}^s
<\infty$
for every $s\in(0,1)$. Hence,
\eqref{eq-sharp-near-one-fail} cannot hold, which gives the desired
conclusion in this case.

\emph{Case 2: $n(\frac1{p_2}-\frac1q)=k$ and $p_1\le q$.}
In this case, for any $s\in(0,1)$,
\begin{align*}
\eta_s
&=
n(1-s)\left(\frac1q-\frac1{p_1}\right)
\le0.
\end{align*}
Combining this with \eqref{eq-sharp-near-one-integral}, we find that
\eqref{eq-sharp-near-one-growth-2} holds
for every $s\in(0,1)$. Since the right-hand side of
\eqref{eq-sharp-near-one-fail} is finite for this fixed function
$\phi$, the estimate cannot hold. This gives the desired conclusion
in this case.

\emph{Case 3: $n(\frac1{p_2}-\frac1q)=k$ and $p_1>q$.}
In this case, for any $s\in(0,1)$,
\begin{align}\label{eq-sharp-near-one-growth-1}
\eta_s
&=
n(1-s)\left(\frac1q-\frac1{p_1}\right)>0.
\end{align}
Thus, from \eqref{eq-sharp-near-one-integral}, it follows that, for any
$s\in(0,1)$,
\begin{align*}
\left\|\mathfrak D_q^{s,k}(\phi)\right\|_{L^{p_s}}^{p_s}
&\gtrsim
\frac{(4k)^{-p_s\eta_s}}{p_s\eta_s}.
\end{align*}
Since $p_s\to p_2$ as $s\to1^-$, from
\eqref{eq-sharp-near-one-growth-1}, we infer that
$p_s\eta_s
\sim 1-s$
for every $s$ sufficiently close to $1$. Hence,
\begin{align}\label{eq-sharp-near-one-growth}
\left\|\mathfrak D_q^{s,k}(\phi)\right\|_{L^{p_s}}
&\gtrsim
(1-s)^{-\frac1{p_s}}
\end{align}
for every $s$ sufficiently close to $1$.

On the other hand, since $\phi\in C_{\rm c}^{\infty}$ is fixed, it follows that
\begin{align*}
\sup_{s\in(0,1)}
\|\phi\|_{L^{p_1}}^{1-s}
\left\|\nabla^k\phi\right\|_{L^{p_2}}^s
&<\infty.
\end{align*}
Therefore, using \eqref{eq-sharp-near-one-growth} and
$
\lim_{s\to 1^-}\frac1{p_s}
=
\frac1{p_2}
=
\frac1q+\frac kn,$
we obtain
\begin{align*}
(1-s)^{\frac1q}
\frac{
\|\mathfrak D_q^{s,k}(\phi)\|_{L^{p_s}}
}{
\|\phi\|_{L^{p_1}}^{1-s}
\|\nabla^k\phi\|_{L^{p_2}}^s
}
&\gtrsim
(1-s)^{\frac1q-\frac1{p_s}}\to\infty
\end{align*}
as
$s\to1^-$. This proves the desired conclusion in this case.

Finally, when $p_1=\infty$, we have
$
\|\phi\|_{\operatorname{BMO}}
\lesssim
\|\phi\|_{L^\infty}.$
Thus, an estimate with the $\operatorname{BMO}$ norm would imply
\eqref{eq-sharp-near-one-fail} with the $L^\infty$ norm. The BMO
assertion hence follows, which then completes the proof of Proposition
\ref{prop-sharp-fixed-s}.
\end{proof}

\subsection{The Assumption
\texorpdfstring{$q<p_X$}{q less than pX} in Theorem \ref{thmGNS}}
\label{sec-cqpx}

We next show that the strict assumption $q<p_X$ associated with
\eqref{eq-gns2} cannot be relaxed in general. More precisely, we construct
explicit {\rm BBF} spaces for which \eqref{eq-gns2} fails, although the
assumption considered in the preceding subsection is satisfied.

In the endpoint example below, we use the multiplier-weight convention;
that is, for any $q\in(0,\infty)$, any $f\in L^0$, and any positive measurable $\omega$, define
\begin{align*}
\|f\|_{L^q(\omega)}
:=
\left[
\int_{\mathbb{R}^n}|f(x)|^q\omega(x)^q\,dx
\right]^{\frac1q}.
\end{align*}
Thus, in the standard notation for weighted Lebesgue spaces, the
corresponding weight is $v:=\omega^q$. The
Muckenhoupt classes are understood in their usual sense; see,
for instance, \cite[Chapter 7]{Gra14}.

\begin{proposition}\label{prop-endpoint-counterexample}
Let $k\in\mathbb{N}$, $q\in(1,\infty)$, and $r\in(1,q]$ satisfy
\begin{align}\label{eq-sharp-r-range}
n\left(\frac1r-\frac1q\right)<k.
\end{align}
Fix $\beta\in(0,\infty)$ and let
$\omega_\beta(x):=[\log(e+|x|)]^\beta$ for every
$x\in\mathbb{R}^n$. Define
\begin{align*}
X
:=
\begin{cases}
L^r& \text{if }r<q,\\
L^q(\omega_\beta)& \text{if }r=q.
\end{cases}
\end{align*}
Then $X$ is a {\rm BBF} space, $p_X=r$, and
\begin{align}\label{eq-sharp-dimension-holds}
n\left(\frac1{p_X}-\frac1q\right)<k.
\end{align}
However, there is no positive constant $C$, independent of $s$ and
$f$, such that
\begin{align}\label{eq-endpoint-fail}
\left\|\mathfrak D_q^{s,k}(f)\right\|_X
\le
C[s(1-s)]^{-\frac1q}
\|f\|_X^{1-s}
\left\|\nabla^k f\right\|_X^s
\end{align}
holds for every $s\in(0,1)$ and
$f\in X\cap\dot W^{k,X}$.
\end{proposition}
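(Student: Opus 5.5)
The plan is to test \eqref{eq-endpoint-fail} on a single fixed bump and let $s\to0^+$. The right-hand side then stays of order $s^{-\frac1q}$, while the left-hand side blows up faster because of the slow decay of $\mathfrak D_q^{s,k}$ at infinity given by Lemma \ref{lem-sharp-tail}. The structural claims come first.

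\emph{Structural claims.}
\begin{enumerate}[{\rm (a)}]
\item \emph{$X$ is a BBF space.} For $r<q$ this is standard. For $r=q$, the weight $v:=\omega_\beta^q=[\log(e+|x|)]^{\beta q}$ is locally bounded above and below by positive constants. Hence ${\bf 1}_B\in X$ and $\int_B|f|\lesssim_B\|f\|_X$ by H\"older's inequality, and the lattice and Fatou properties are inherited from $L^q(v)$.
\item \emph{$p_X=r$ when $X=L^r$.} For $p\in(1,r)$, the space $X^{\frac1p}=L^{\frac rp}$ is a BBF space on which $M$ is bounded. For $p\ge r$, either $M$ is unbounded (the case $L^1$) or $L^{\frac rp}$ is not normable.
\item \emph{$p_X=q$ when $X=L^q(\omega_\beta)$.} Here $X^{\frac1p}=L^{\frac qp}$ with weight $v$. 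I would check that $v\in A_1$: $v$ is radial, increasing and doubling, and on balls not near the origin it is essentially constant. Then $v\in A_t$ for all $t>1$, so $M$ is bounded on $X^{\frac1p}$ for every $p<q$. At $p=q$, $M$ fails to be bounded on $L^1(v)$.
\item \emph{Condition \eqref{eq-sharp-dimension-holds}.} With $p_X=r$, this is exactly \eqref{eq-sharp-r-range}.
\end{enumerate}

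\emph{Failure of \eqref{eq-endpoint-fail}.} Take the function $\phi\in C_{\rm c}^\infty(B({\bf 0},2))$ from Lemma \ref{lem-sharp-tail}. Clearly $\phi\in X\cap\dot W^{k,X}$, and the right-hand side of \eqref{eq-endpoint-fail} is at most $C_\phi[s(1-s)]^{-\frac1q}\sim s^{-\frac1q}$ as $s\to0^+$. By \eqref{eq-sharp-tail}, $\mathfrak D_q^{s,k}(\phi)(x)\gtrsim|x|^{-\frac nq-sk}$ for $|x|\ge4k$, uniformly in $s$.
\begin{enumerate}[{\rm (a)}]
\item \emph{Case $r<q$.} Then
\begin{align*}
\|\mathfrak D_q^{s,k}(\phi)\|_{L^r}^r\gtrsim\int_{4k}^\infty t^{-1-r(\frac nq+sk-\frac nr)}\,dt .
\end{align*}
For $s\le\frac nk(\frac1r-\frac1q)$ the exponent satisfies $r(\frac nq+sk-\frac nr)\le0$, so the integral diverges and the left-hand side is infinite. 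Thus \eqref{eq-endpoint-fail} fails already for each such fixed $s$.
\item \emph{Case $r=q$.} Then
\begin{align*}
\|\mathfrak D_q^{s,k}(\phi)\|_{X}^q\gtrsim\int_{4k}^\infty t^{-1-skq}[\log t]^{\beta q}\,dt .
\end{align*}
Substituting $u=skq\log t$ shows this is at least $c\,(skq)^{-\beta q-1}\int_{skq\log(4k)}^\infty u^{\beta q}e^{-u}\,du\gtrsim s^{-\beta q-1}$ for small $s$. Hence the left-hand side is $\gtrsim s^{-\beta-\frac1q}$, while the right-hand side is $\lesssim s^{-\frac1q}$. Since $\beta>0$, this is a contradiction as $s\to0^+$.
\end{enumerate}

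The step I expect to be the main obstacle is the rigorous identification $p_X=q$ in the weighted case. It requires the $A_1$ (or at least all-$A_t$, $t>1$) property of the logarithmic weight, together with the unboundedness of $M$ on $L^1(v)$ at the endpoint. For the $A_1$ property I would use the explicit estimate $Mv(x)\lesssim\log(e+|x|)^{\beta q}$, obtained by splitting balls according to whether their radius exceeds $|x|/2$.
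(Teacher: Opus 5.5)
Your treatment of the failure of \eqref{eq-endpoint-fail} is the paper's argument, and it is correct. It uses the same bump from Lemma \ref{lem-sharp-tail} and the divergence of the $L^r$ integral at a fixed small $s$ when $r<q$; the paper simply quotes Proposition \ref{prop-sharp-fixed-s5} with $p_1=p_2=r$. When $r=q$ it uses the same $s^{-\beta q-1}$ blow-up as $s\to0^+$; your substitution $u=skq\log t$ and the paper's truncation at $t=e^{1/s}$ are interchangeable. Your direct check that $L^q(\omega_\beta)$ is a BBF space, using $1\le\omega_\beta\lesssim_B 1$ on each ball, is also fine and more elementary than the paper's citation.

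The step that fails is the one you flagged as the main obstacle. The weight $v=[\log(e+|x|)]^{\beta q}$ is \emph{not} in $A_1$, and the bound $Mv(x)\lesssim[\log(e+|x|)]^{\beta q}$ is false. Since $v$ is radially increasing and unbounded, $\fint_{B(x,R)}v\gtrsim[\log(e+R)]^{\beta q}\to\infty$ as $R\to\infty$, so $Mv\equiv\infty$. Your splitting at radius $|x|/2$ breaks down precisely on the balls with $R\gg|x|$, so you cannot get $v\in A_t$ by passing through $A_1$.

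What is true, and what the paper uses, is $v\in\bigcap_{t>1}A_t$. The clean way to prove it is via the dual weight. For $t>1$ let $w_t:=v^{-\frac{1}{t-1}}=[\log(e+|x|)]^{-\frac{\beta q}{t-1}}$, which is radially \emph{decreasing} and slowly varying. For this weight your splitting does work:
\begin{itemize}
\item If $R\ge|x|/2$, then $B(x,R)\subset B({\bf 0},3R)$. Splitting the radial integral at $\sqrt{R}$ gives $\fint_{B(x,R)}w_t\le 3^n\fint_{B({\bf 0},3R)}w_t\lesssim[\log(e+R)]^{-\frac{\beta q}{t-1}}\lesssim\inf_{B(x,R)}w_t$.
\item If $R<|x|/2$, the weight is essentially constant on $B(x,R)$.
\end{itemize}
Hence $w_t\in A_1$, and $v=w_t^{1-t}\in A_t$ by the elementary implication $w\in A_1\Rightarrow w^{1-t}\in A_t$. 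This gives the boundedness of $M$ on $X^{\frac1p}=L^{\frac qp}(v)$ for every $p\in(1,q)$.

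To conclude $p_X=q$ you must also rule out $p>q$. There $\frac qp<1$ and $L^{\frac qp}(v)$ is not normable, exactly as in your case (b). The unboundedness of $M$ on $L^1(v)$ at $p=q$ plays no role in the supremum.
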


\begin{proof}
We first verify the assertions concerning $X$ and $p_X$.

Assume that $r\in(1,q)$. Then it is easy to prove that
$X=L^r$ is a {\rm BBF} space and
$p_X=r$. Hence, \eqref{eq-sharp-dimension-holds} follows immediately
from \eqref{eq-sharp-r-range}.

We next consider $r=q$. Let
$v_\beta:=\omega_\beta^q=[\log(e+|\cdot|)]^{\beta q}$.
By the standard criterion for logarithmic radial weights
(see, for instance,
\cite[Chapter 7]{Gra14}), we find that
$v_\beta\in\bigcap_{t\in(1,\infty)} A_t$. Consequently,
from \cite[Section 7.1]{SHYY17}, it follows that
$X=L^q(\omega_\beta)$ is a {\rm BBF} space.
For every $p\in(1,q)$, we have
$X^{\frac1p}=L^{\frac qp}(\omega_\beta^p)$. In the standard weighted
Lebesgue notation, the corresponding weight is
\begin{align*}
(\omega_\beta^p)^{\frac qp}
=
\omega_\beta^q
=
v_\beta
\in A_{\frac qp}.
\end{align*}
Thus, the Hardy--Littlewood maximal operator
$M$ is bounded on
$X^{\frac1p}$. If $p>q$, then $\frac{q}{p}<1$, and hence
$X^{\frac1p}=L^{\frac qp}(\omega_\beta^p)$ does not satisfy the
triangle inequality and therefore is not a {\rm BBF} space.
It follows that $p_X=q$. Therefore,
$n(\frac1{p_X}-\frac1q)=0<k$ and
\eqref{eq-sharp-dimension-holds} also holds in this case.

We now show the failure of \eqref{eq-endpoint-fail}. Assume first that
$r\in(1,q)$. Choose
\begin{align*}
0<s<
\frac nk\left(\frac1r-\frac1q\right).
\end{align*}
Such a choice is possible by \eqref{eq-sharp-r-range}, and it gives
$n(\frac1r-\frac1q)>sk$. Applying Proposition
\ref{prop-sharp-fixed-s5} with $p_1=p_2=r$, we find a nonzero function
$\phi\in C_{\rm c}^{\infty}$ such that
\begin{align*}
\left\|\mathfrak D_q^{s,k}(\phi)\right\|_{L^r}
=
\infty.
\end{align*}
On the other hand, both $\|\phi\|_{L^r}$ and
$\|\nabla^k\phi\|_{L^r}$ are finite. Thus, the left-hand side of
\eqref{eq-endpoint-fail} is infinite, while its right-hand side is
finite, and hence \eqref{eq-endpoint-fail} fails.

It remains to consider $r=q$. Let $\phi$ be as in Lemma
\ref{lem-sharp-tail}. Since $\phi$ and $\nabla^k\phi$ have compact
support, both belong to $X$. Moreover, from \eqref{eq-sharp-tail} and
$\rho^{-skq}\ge e^{-kq}$ when
$\rho\le e^{1/s}$, we deduce that,
for every sufficiently small $s\in(0,1)$,
\begin{align*}
\left\|\mathfrak D_q^{s,k}(\phi)\right\|_X^q
&\gtrsim
\int_{4k}^{\infty}
\rho^{-1-skq}
[\log(e+\rho)]^{\beta q}\,d\rho\gtrsim
\int_{4k}^{e^{1/s}}
\frac{(\log\rho)^{\beta q}}{\rho}\,d\rho
\gtrsim
s^{-\beta q-1}.
\end{align*}
On the other hand,
$\|\phi\|_X^{1-s}\|\nabla^k\phi\|_X^s$ is bounded uniformly with
respect to $s\in(0,1)$. Therefore,
\begin{align*}
[s(1-s)]^{\frac1q}
\frac{
\left\|\mathfrak D_q^{s,k}(\phi)\right\|_X
}{
\|\phi\|_X^{1-s}
\|\nabla^k\phi\|_X^s
}
\gtrsim
s^{-\beta}(1-s)^{\frac1q}
\longrightarrow
\infty
\end{align*}
as $s\to0^+$. Hence, no constant independent of $s$ and $f$ can make
\eqref{eq-endpoint-fail} hold when $r=q$.
This then completes the proof of Proposition
\ref{prop-endpoint-counterexample}.
\end{proof}

\section{Applications to Specific Function Spaces}\label{sec-app}

We now give concrete applications of the preceding results to several
classical and modern function space scales. In Subsection
\ref{sec-6.1}, we consider weighted Lebesgue, Morrey, and
Bourgain--Morrey spaces, verify the relevant assumptions,
and summarize the resulting fractional Gagliardo--Nirenberg, BBM, and
MS type conclusions. In Subsection \ref{sec-lorentz}, we specialize
our results to Lorentz and Orlicz spaces; besides obtaining explicit
off-diagonal inequalities and endpoint asymptotic formulas, we
characterize the optimal rearrangement invariant target
spaces for both the
fractional and the corresponding integer order inequalities. In
Subsection \ref{sec-bmo}, we exploit the Lebesgue structure of the
derivative space to establish a sharpened BMO endpoint estimate with
improved dependence on the parameters. Finally, in Subsection
\ref{sec-morespace}, we indicate further applications to variable
Lebesgue, mixed-norm Lebesgue, Orlicz-slice, and generalized Herz
spaces, highlighting both the recovery of known diagonal results and
the new off-diagonal consequences.

\subsection{Weighted Lebesgue, Morrey, and Bourgain--Morrey Spaces}
\label{sec-6.1}

In this subsection, we apply the preceding results to weighted Lebesgue,
Morrey, and Bourgain--Morrey type spaces. We first recall their
definitions and the structural properties needed to verify the
hypotheses of Theorems \ref{thmGNS}, \ref{thm-fs}, \ref{thmBBM}, and
\ref{thmMS}, and then summarize the resulting inequalities and
asymptotic estimates. Since these spaces are generally not rearrangement
invariant, the application of Theorem \ref{thm-fs} here concerns its norm
inequalities rather than the characterization of optimal rearrangement
invariant target spaces.

{\bf Weighted Lebesgue spaces.} \quad A weight is nonnegative locally integrable function on $\mathbb{R}^n$ which is
positive almost everywhere. In this section, we use the multiplier-weight convention,
i.\,e., for any $p\in(0,\infty)$, any weight $\omega$, and any $f\in L^0$,
\begin{align*}
\|f\|_{L^p(\omega)}
:=
\left[
\int_{\mathbb{R}^n}|f(x)|^p\omega(x)^p\,dx
\right]^{\frac1p}.
\end{align*}
Thus, in the standard notation for weighted Lebesgue spaces, the
corresponding weight is $w:=\omega^p$. The
Muckenhoupt classes are understood in their usual sense; see
\cite[Chapter 7]{Gra14}. In particular, for $t\in(1,\infty)$,
$w\in A_t$ if and only if
\begin{align*}
\sup_Q
\left[\fint_Qw(x)\,dx\right]
\left[\fint_Qw(x)^{-\frac1{t-1}}\,dx\right]^{t-1}
<\infty,
\end{align*}
where the supremum is taken over all cubes $Q\subset\mathbb{R}^n$.
For any $w\in A_\infty:=\bigcup_{t\in(1,\infty)}A_t$, define
\begin{align*}
I(w):=\inf\{t\in[1,\infty):w\in A_t\}.
\end{align*}
Let $p\in(1,\infty)$ and assume $\omega^p\in A_p$. Then
$L^p(\omega)$ is a {\rm BBF} space with an absolutely continuous norm;
see, for instance, \cite[Section 7.1]{SHYY17}. Moreover, for any
$\tau\in(1,p)$,
$[L^p(\omega)]^{\frac1\tau}
=
L^{\frac p\tau}(\omega^\tau).$
Thus, by the Muckenhoupt theorem
\cite[Theorem 7.1.9]{Gra14} and the definition of $I(\omega^p)$, we find that
\begin{align*}
p_{L^p(\omega)}
=
\frac{p}{I(\omega^p)}.
\end{align*}
If $p_i\in(1,\infty)$ and
$\omega_i^{p_i}\in A_{p_i}$ for $i\in\{1,2\}$,
then, from \cite[Lemma 5]{KLM14} or
\cite[Proposition 1]{KM03}, it follows that
\begin{align}\label{eq-weighted-product}
[L^{p_1}(\omega_1)]^{1-s}[L^{p_2}(\omega_2)]^s
=
L^{p_s}(\omega_s)
\end{align}
with equivalent norms, where
\begin{align*}
\frac1{p_s}
=
\frac{1-s}{p_1}+\frac{s}{p_2}\quad\text{and}\quad
\omega_s
=
\omega_1^{1-s}\omega_2^s.
\end{align*}

{\bf Morrey spaces.}\quad
Let $p\in[1,\infty)$ and $u\in[p,\infty)$. The \emph{Morrey space}
$\mathcal{M}_p^u$ is defined to be the set of all
$f\in L^p_{\rm loc}$ such that
\begin{align*}
\|f\|_{\mathcal{M}_p^u}
:=
\sup_Q
|Q|^{\frac1u-\frac1p}
\left[
\int_Q|f(x)|^p\,dx
\right]^{\frac1p}
<\infty,
\end{align*}
where the supremum is taken over all cubes $Q\subset\mathbb{R}^n$.
These spaces were introduced by Morrey \cite{Mor38}; see also the monographs
\cite{a15,SDGD20I,SDGD20II}. If $p>1$, then
$\mathcal{M}_p^u$ is a {\rm BBF} space and
$p_{\mathcal{M}_p^u}=p$. When $p=u$, it reduces to $L^p$; when
$p<u$, its norm is not absolutely continuous.

{\bf Bourgain--Morrey spaces.}\quad
Let the \emph{symbol $\mathcal{D}$} denote the family of all dyadic cubes in $\mathbb{R}^n$.
For $p,u,r\in(0,\infty]$, the \emph{Bourgain--Morrey} space
$\mathcal{M}_{p,r}^u(\mathbb{R}^n)$ is defined to be the set of all
$f\in L^p_{\rm loc}$ such that
\begin{align*}
\|f\|_{\mathcal{M}_{p,r}^u}
:=
\left[
\sum_{Q\in\mathcal{D}}
\left(
|Q|^{\frac1u-\frac1p}
\|f{\bf 1}_Q\|_{L^p}
\right)^r
\right]^{\frac1r}
<\infty
\end{align*}
with the usual modification made when $r=\infty$; see
\cite{Bou91,Mas-arXiv,HNSH22}. This space is nontrivial only in the ranges
$p<u<r<\infty$ and $p\le u\le r=\infty$. Moreover,
$\mathcal{M}_{p,\infty}^u=\mathcal{M}_p^u$ with equivalent norms.
When $p\in (1,\infty]$, $\mathcal{M}_{p,r}^u$ is a {\rm BBF} space and
$p_{\mathcal{M}_{p,r}^u}=p$. If $r<\infty$, its norm is absolutely
continuous, whereas the case $r=\infty$ contains the genuine Morrey
spaces and does not have this property in general.

Now, we briefly comment on the
Calder\'on--Lozanovski\u{\i} products of these Morrey type spaces. Let
$i\in\{1,2\}$, $p_i\in[1,\infty)$, and $u_i\in[p_i,\infty)$.
Assume that $p_s$ and $u_s$ satisfy
\begin{align*}
\frac1{p_s}
=
\frac{1-s}{p_1}+\frac{s}{p_2}
\quad\text{and}\quad
\frac1{u_s}
=
\frac{1-s}{u_1}+\frac{s}{u_2}.
\end{align*}
For Morrey spaces, it is known that
\begin{align*}
(\mathcal M_{p_1}^{u_1})^{1-s}
(\mathcal M_{p_2}^{u_2})^s
\hookrightarrow
\mathcal M_{p_s}^{u_s}.
\end{align*}
Moreover, this embedding is an equality, with equivalent norms, when
$p_1/u_1=p_2/u_2$, while it is strict when
$p_1/u_1\ne p_2/u_2$; see
\cite[Theorem 2.5]{YSY16} and the references therein.
In addition, as a special case of \cite[Theorem 4.3]{ZSTYY23}, the
Calder\'on--Lozanovski\u{\i} products of Bourgain--Morrey spaces can be
identified explicitly under suitable compatibility conditions on the
parameters. In the general case an explicit description of
these products is still unknown.

Finally, we also verify the assumption on the centered ball average operators
appearing in Theorem \ref{thmMS} for these spaces. In the weighted Lebesgue case, if
$
q
<
\frac{p}{I(\omega^p)},$
then $\frac pq>I(\omega^p)$ and hence $\omega^p\in A_{\frac pq}$. Since
$
[L^p(\omega)]^{\frac1q}
=
L^{\frac pq}(\omega^q),$
the Muckenhoupt theorem \cite[Theorem 7.1.9]{Gra14} and the pointwise estimate
$\mathcal B_r(f)\le M(f)$ imply that the centered ball average operators
are uniformly bounded on $[L^p(\omega)]^{\frac1q}$.
For Morrey spaces, if $q\in[1,p]$, then
$
[\mathcal M_p^u]^{\frac1q}
=
\mathcal M_{\frac pq}^{\frac uq}.$
The translation invariance of the Morrey norm and Minkowski's inequality
imply that, for any appropriate $f$ and any $r\in(0,\infty)$,
\begin{align*}
\|\mathcal B_r(f)\|_{\mathcal M_{\frac pq}^{\frac uq}}
\le
\|f\|_{\mathcal M_{\frac pq}^{\frac uq}}.
\end{align*}
For Bourgain--Morrey spaces, if $q<p$, then
$
[\mathcal M_{p,r}^u]^{\frac1q}
=
\mathcal M_{\frac pq,\frac rq}^{\frac uq}.$
Combining this
and the boundedness of the Hardy--Littlewood maximal operator on
Bourgain--Morrey spaces (see \cite{HNSH22,HLY23}), we further obtain
the required uniform boundedness of the
centered ball average operators. Therefore, the applications of Theorem
\ref{thmMS} recorded below are understood under these corresponding
conditions for both spaces involved.

{\bf Conclusions.}\quad
To simplify the presentation, we use the following notation in the table below:
the \emph{symbol} $\mathbf{G}$ denotes
that the estimates \eqref{eq-gns1} and
\eqref{eq-gns2} hold,
the \emph{symbol} $\mathbf{F}$ denotes that norm inequality
\eqref{eq-XYss} holds,
the \emph{symbol} $\mathbf{B}$ denotes that Theorem \ref{thmBBM} holds, and that
the \emph{symbol} $\mathbf{S}$ denotes that Theorem \ref{thmMS} holds. All entries are understood
under the hypotheses of the corresponding theorem, including the
conditions on the centered ball average operators verified above.
Since the spaces in this subsection are generally not rearrangement
invariant, the \emph{symbol} $\mathbf{F}$ refers only to
that \eqref{eq-XYss} holds, and not to
that the
optimal-target assertion in Theorem \ref{thm-fs} holds.
For $i\in\{1,2\}$, let $X_i$ denote the space containing the $i$-th
factor, with $X_1$ containing $f$ and $X_2$ containing
$\nabla^k f$. We also write
$\mathcal C_s(X_1,X_2):=X_1^{1-s}X_2^s$.

\sbox0{\ref{eq-weighted-product}}
\begin{center}
\small
\setlength{\tabcolsep}{4pt}
\renewcommand{\arraystretch}{1.25}
\begin{tabularx}{\textwidth}{|>{\raggedright\arraybackslash}p{0.19\textwidth}|>{\raggedright\arraybackslash}p{0.13\textwidth}|>{\raggedright\arraybackslash}p{0.18\textwidth}|>{\raggedright\arraybackslash}X|}
\hline
\textbf{Scale} & \textbf{Lower index} &
\textbf{Target in $\mathbf{F}$} & \textbf{Conclusions} \\
\hline
Weighted Lebesgue
& $p_{X_i}=\frac{p_i}{I(\omega_i^{p_i})}$
& $L^{p_s}(\omega_s)$, by \eqref{eq-weighted-product}
& $\mathbf{G}$, $\mathbf{F}$, $\mathbf{B}$, and $\mathbf{S}$ \\
\hline
Morrey
& $p_{X_i}=p_i$
& $\mathcal C_s(\mathcal M_{p_1}^{u_1},\mathcal M_{p_2}^{u_2})$
& $\mathbf{G}$, $\mathbf{F}$, and $\mathbf{S}$; $\mathbf{B}$ only when the relevant norms are absolutely continuous \\
\hline
Bourgain--Morrey
& $p_{X_i}=p_i$
& $\mathcal C_s(\mathcal M_{p_1,r_1}^{u_1},\mathcal M_{p_2,r_2}^{u_2})$
& $\mathbf{G}$, $\mathbf{F}$, and $\mathbf{S}$; $\mathbf{B}$ when $r_1,r_2<\infty$ \\
\hline
\end{tabularx}
\end{center}

\begin{remark}
\begin{enumerate}[{\rm(i)}]
\item When $p_1=p_2$ and $\omega_1=\omega_2$, the weighted
Gagliardo--Nirenberg and BBM type results reduce to, in the common range
and with $\Omega=\mathbb{R}^n$, the corresponding conclusions in
\cite[Theorems 1.6 and 1.8(i)]{HLYY-ineq}. In the diagonal first order
case, the weighted MS type estimate also improves the corresponding
weighted Lebesgue space case of \cite[Theorem 2.16(i)]{PYYZ24}. The off-diagonal
weighted Gagliardo--Nirenberg, BBM, and MS type results, as well as their
higher order MS extensions, appear to be new.

\item For Bourgain--Morrey spaces, the diagonal
Gagliardo--Nirenberg result reduces to the corresponding conclusion in
\cite[Theorem 7.3]{HLYY-ineq}. When the relevant norms are absolutely
continuous, the diagonal BBM type result also reduces to the corresponding
part of that theorem. In the diagonal first order case, the MS type
estimate improves the corresponding Morrey and Bourgain--Morrey cases of
\cite[Theorem 2.16(i)]{PYYZ24}. The off-diagonal
Gagliardo--Nirenberg, BBM, and MS type results, together with the
higher order MS extensions, appear to be new.
\end{enumerate}
\end{remark}

\subsection{Lorentz and Orlicz Spaces}\label{sec-lorentz}

In this subsection, we consider Lorentz and Orlicz spaces, which are two classical
scales of rearrangement invariant Banach function spaces. Their lower
indices and Calder\'on--Lozanovski\u{\i} products can be described explicitly,
so the fractional inequalities and endpoint asymptotic results established
above admit concrete formulations in these settings. Moreover, Theorems
\ref{thm-fs} and \ref{thmOP} prove that the resulting
Calder\'on--Lozanovski\u{\i} spaces are precisely the optimal rearrangement
invariant targets. We also give the corresponding integer order
characterizations. In the Lorentz case, the latter gives an answer
to \cite[Question 2.6]{LRS23}.

{\bf Lorentz spaces.}\quad
For $r\in(0,\infty)$ and $\mu\in(0,\infty]$, the \emph{Lorentz space}
$L^{r,\mu}$ is defined to be the set of all $f\in L^0$ such that
\begin{align*}
\|f\|_{L^{r,\mu}}
:=
\left\{
\int_0^\infty
\left[t^{\frac1r}f^*(t)\right]^\mu
\frac{dt}{t}
\right\}^{\frac1\mu}
<\infty,
\end{align*}
with the usual modification made when $\mu=\infty$, where $f^*$ denotes the
non-increasing rearrangement of $f$ in \eqref{eq-fstar}; see \cite{Lor50,Lor51}. When
$r,\mu\in(1,\infty)$, this functional is equivalent to a Banach norm,
$L^{r,\mu}$ has an absolutely continuous norm, and
$p_{L^{r,\mu}}=\min\{r,\mu\}$.

Let $r_1,\mu_1,r_2,\mu_2\in(1,\infty)$ and $s\in(0,1)$. Assume that
$r,\mu\in(1,\infty)$ satisfy
\begin{align*}
\frac1r
=
\frac{1-s}{r_1}+\frac{s}{r_2}
\quad\text{and}\quad
\frac1\mu
=
\frac{1-s}{\mu_1}+\frac{s}{\mu_2}.
\end{align*}
Then
\begin{align}\label{eq-lor-product}
[L^{r_1,\mu_1}]^{1-s}[L^{r_2,\mu_2}]^s
=
L^{r,\mu}
\end{align}
with equivalent norms.

We now record the consequences of the main results in this scale. Let
$X:=L^{r_1,\mu_1}$ and $Y:=L^{r_2,\mu_2}$. Then
$p_X=\min\{r_1,\mu_1\}$ and $p_Y=\min\{r_2,\mu_2\}$. Therefore, if
\begin{align*}
n\left(
\frac{1}{\min\{r_2,\mu_2\}}-\frac1q
\right)
<k,
\end{align*}
Theorem \ref{thmGNS}{\rm(i)} gives estimate \eqref{eq-gns1} with
target $L^{r,\mu}$. If, in addition,
$q<\min\{r_1,\mu_1\}$, then estimate \eqref{eq-gns2} also holds with
the same target.

The BMO endpoint estimate in Theorem \ref{thmGNS}{\rm(ii)} has target
$
Y^{\frac1s}
=
L^{\frac{r_2}{s},\frac{\mu_2}{s}}.
$
For each fixed $s\in(0,1)$, Theorem \ref{thm-fs}{\rm(i)} gives
\eqref{eq-XYss} with target $L^{r,\mu}$ whenever
\begin{align}\label{eq-lor-fixed-condition}
n\left(
\frac{1-s}{\min\{r_1,\mu_1\}}
+
\frac{s}{\min\{r_2,\mu_2\}}
-
\frac1q
\right)
<sk.
\end{align}
Similarly, Theorem \ref{thm-fs}{\rm(ii)} gives the BMO endpoint
estimate with target
$L^{\frac{r_2}{s},\frac{\mu_2}{s}}$ whenever
\begin{align*}
n\left(
\frac{s}{\min\{r_2,\mu_2\}}
-
\frac1q
\right)
<sk,
\end{align*}
and this target is optimal among rearrangement invariant Banach
function spaces.

Since the Lorentz spaces under consideration have absolutely continuous
norms, Theorem \ref{thmBBM} also applies whenever
$n(\frac{1}{\min\{r_2,\mu_2\}}-\frac1q)<k$. Moreover, if
\begin{align*}
q
<
\min\{r_1,\mu_1,r_2,\mu_2\},
\end{align*}
then the relevant convexifications are Banach Lorentz spaces and the
centered ball average operators are uniformly bounded on them.
Consequently, Theorem \ref{thmMS} applies as well.

The preceding conclusions show that $L^{r,\mu}$ is an admissible target.
Theorem \ref{thm-fs}, together with Theorem \ref{thmOP}{\rm(i)}, proves
more generally that an inequality with an arbitrary rearrangement
invariant target $B$ holds if and only if
$L^{r,\mu}\hookrightarrow B$. For targets within the Lorentz scale, this
characterization takes the following explicit form.

\begin{theorem}\label{op-lor}
Let $k\in\mathbb{N}$, $s\in(0,1)$, and
$r_1,\mu_1,r_2,\mu_2,\widetilde{r},\widetilde{\mu}\in(1,\infty)$, and define $r$ and $\mu$ as above.
Let $q\in[1,\infty)$ satisfy \eqref{eq-lor-fixed-condition}. Then
there exists a positive constant $C$ such that
the
inequality
\begin{align}\label{eq-lor-opt}
\left\|\mathfrak D_q^{s,k}(f)\right\|_{L^{\widetilde r,\widetilde\mu}}
\le C
\|f\|_{L^{r_1,\mu_1}}^{1-s}
\left\|\nabla^k f\right\|_{L^{r_2,\mu_2}}^s
\end{align}
holds for every
$f\in L^{r_1,\mu_1}\cap\dot W^{k,L^{r_2,\mu_2}}$ if and only if
$\widetilde r=r$ and $\widetilde\mu\ge\mu$.
\end{theorem}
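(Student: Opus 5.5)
The plan is to reduce Theorem \ref{op-lor} to Theorem \ref{thm-fs}(i) and then to a classical embedding question between Lorentz spaces. Set $X:=L^{r_1,\mu_1}$, $Y:=L^{r_2,\mu_2}$ and $B:=L^{\widetilde r,\widetilde\mu}$. All three are rearrangement invariant Banach function spaces, since $r_i,\mu_i,\widetilde r,\widetilde\mu\in(1,\infty)$ and we pass to the equivalent Banach norm. Their lower indices are $p_X=\min\{r_1,\mu_1\}$ and $p_Y=\min\{r_2,\mu_2\}$, both in $(1,\infty)$, so condition \eqref{eq-lor-fixed-condition} is exactly the hypothesis of Theorem \ref{thm-fs}(i). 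That theorem then shows that \eqref{eq-lor-opt} holds if and only if $X^{1-s}Y^s\hookrightarrow B$. By \eqref{eq-lor-product}, this is equivalent to $L^{r,\mu}\hookrightarrow L^{\widetilde r,\widetilde\mu}$. It therefore remains to show that, for $r,\mu,\widetilde r,\widetilde\mu\in(1,\infty)$ on $\mathbb{R}^n$, this embedding holds if and only if $\widetilde r=r$ and $\widetilde\mu\ge\mu$.

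For sufficiency, take $\widetilde r=r$ and $\mu\le\widetilde\mu$. The standard monotonicity of Lorentz spaces in the second index gives $\|f\|_{L^{r,\widetilde\mu}}\lesssim\|f\|_{L^{r,\mu}}$. To prove it, note that $t^{1/r}f^*(t)\lesssim\|f\|_{L^{r,\mu}}$, because $f^*$ is nonincreasing and so $\int_{t/2}^t[\tau^{1/r}f^*(\tau)]^\mu\,\frac{d\tau}{\tau}\gtrsim[t^{1/r}f^*(t)]^\mu$. Then interpolate:
\begin{align*}
\int_0^\infty[t^{\frac1r}f^*(t)]^{\widetilde\mu}\,\frac{dt}{t}\le\left\|t^{\frac1r}f^*\right\|_{L^\infty}^{\widetilde\mu-\mu}\|f\|_{L^{r,\mu}}^{\mu}.
\end{align*}

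For necessity, I would use dilations. Put $f_\lambda(x):=f(\lambda x)$; then $f_\lambda^*(t)=f^*(\lambda^n t)$, so $\|f_\lambda\|_{L^{r,\mu}}=\lambda^{-n/r}\|f\|_{L^{r,\mu}}$. Testing the embedding on $f_\lambda$ with $f={\bf 1}_{B({\bf 0},1)}$ gives $\lambda^{-n/\widetilde r}\lesssim\lambda^{-n/r}$ for all $\lambda\in(0,\infty)$, which forces $\widetilde r=r$. With $\widetilde r=r$ fixed, suppose $\widetilde\mu<\mu$. I would test on a function with prescribed rearrangement, which is possible in $\mathbb{R}^n$ by taking a radial nonincreasing function: choose
\begin{align*}
f^*(t)=t^{-\frac1r}[\log(e+t+t^{-1})]^{-\gamma}\quad\text{with}\quad\frac1\mu<\gamma\le\frac1{\widetilde\mu}.
\end{align*}
Then $\int_0^\infty[\log(e+t+t^{-1})]^{-\gamma\mu}\,\frac{dt}{t}<\infty$, so $f\in L^{r,\mu}$, while the corresponding integral with exponent $\gamma\widetilde\mu\le1$ diverges, so $f\notin L^{r,\widetilde\mu}$. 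This contradicts the embedding.

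The main obstacle is making sure that the identification \eqref{eq-lor-product} and Theorem \ref{thm-fs}(i) apply with equivalent norms and not merely as sets. Since Theorem \ref{thmOP} only uses the embedding up to constants, and Lorentz quasinorms are equivalent to the Banach norms in this range, this should go through. The counterexample step is routine once the rearrangement-realization remark is made.
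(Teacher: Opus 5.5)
Your proof is correct in structure and takes essentially the paper's route. Theorem \ref{thm-fs}(i), which rests on \eqref{eq-XYss} for sufficiency and Theorem \ref{thmOP}(i) for necessity, together with \eqref{eq-lor-product}, reduces the statement to the embedding $L^{r,\mu}\hookrightarrow L^{\widetilde r,\widetilde\mu}$. Scaling then forces $\widetilde r=r$, and second-index monotonicity settles $\widetilde\mu\ge\mu$. The only difference is that you scale the embedding, whereas the paper scales \eqref{eq-lor-opt} itself; this is slightly cleaner.

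One detail needs repair. The function $t^{-1/r}[\log(e+t+t^{-1})]^{-\gamma}$ is not nonincreasing in general, because the logarithmic factor increases on $(0,1)$. For instance, with $r=10$ and $\gamma=\frac12$, its logarithmic derivative is positive at $t=0.3$. So it cannot be a rearrangement.

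Use instead $\min\{1,t^{-1/r}[\log(e+t)]^{-\gamma}\}$, realized as the rearrangement of a radial nonincreasing function. It is nonincreasing, lies in $L^{r,\mu}$ exactly when $\gamma\mu>1$, and is not in $L^{r,\widetilde\mu}$ when $\gamma\widetilde\mu\le1$.
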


\begin{proof}
If $\widetilde r=r$ and $\widetilde\mu\ge\mu$, the sufficiency follows
from \eqref{eq-lor-product}, Theorem \ref{thm-fs}, and the monotonicity
of Lorentz spaces with respect to the second index. Conversely, assume
that \eqref{eq-lor-opt} holds. A scaling argument gives
$\widetilde r=r$. By Theorem \ref{thmOP}{\rm(i)},
$L^{r,\mu}\hookrightarrow L^{r,\widetilde\mu}$, which is equivalent to
$\widetilde\mu\ge\mu$. This completes the proof of Theorem
\ref{op-lor}.
\end{proof}

The same conclusion holds in the corresponding integer order inequality.
In particular, the following result includes the unresolved Lorentz case
in \cite[Question 2.6]{LRS23}.

\begin{theorem}\label{op-lor-integer}
Let $1\le j<k$,
$r_1,\mu_1,r_2,\mu_2\in(1,\infty)$, and $r,\mu\in(1,\infty)$ satisfy
\begin{align*}
\frac1r
=
\left(1-\frac jk\right)\frac1{r_1}
+
\frac jk\frac1{r_2}
\quad\text{and}\quad
\frac1\mu
=
\left(1-\frac jk\right)\frac1{\mu_1}
+
\frac jk\frac1{\mu_2}.
\end{align*}
Then there exists a positive constant $C$ such that
\begin{align*}
\|\nabla^j f\|_{L^{\widetilde r,\widetilde\mu}}
\le C
\|f\|_{L^{r_1,\mu_1}}^{1-\frac jk}
\left\|\nabla^k f\right\|_{L^{r_2,\mu_2}}^{\frac jk}
\end{align*}
holds for every
$f\in L^{r_1,\mu_1}\cap\dot W^{k,L^{r_2,\mu_2}}$ if and only if
$\widetilde r=r$ and $\widetilde\mu\ge\mu$.
\end{theorem}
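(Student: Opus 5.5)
The plan is to copy the proof of Theorem \ref{op-lor}, replacing Theorem \ref{thm-fs} by the classical integer order Gagliardo--Nirenberg inequality \eqref{eq-GNinter} for sufficiency, and using Theorem \ref{thmOP}(ii) for necessity. Put $X:=L^{r_1,\mu_1}$, $Y:=L^{r_2,\mu_2}$ and $\theta:=\frac jk$. By \eqref{eq-lor-product} with $s$ replaced by $\theta$, the Calder\'on--Lozanovski\u{\i} space satisfies $X^{1-\theta}Y^\theta=L^{r,\mu}$ with equivalent norms.

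For sufficiency, assume $\widetilde r=r$ and $\widetilde\mu\ge\mu$. Since all indices lie in $(1,\infty)$, the spaces $X$ and $Y$ are rearrangement invariant with nontrivial Boyd indices, so $M$ is bounded on suitable convexifications ($p_X=\min\{r_1,\mu_1\}>1$, and likewise for $Y$). We then prove the estimate in the target $L^{r,\mu}$ via the Maz'ya--Shaposhnikova pointwise inequality
\begin{align*}
|\nabla^j f|\lesssim (Mf)^{1-\theta}\left[M(|\nabla^k f|)\right]^{\theta},
\end{align*}
which is valid for $f\in W^{k,1}_{\rm loc}$; Remark \ref{rem-Boyd}(ii) shows that every $f\in X\cap\dot W^{k,Y}$ lies in this class. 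Combining the pointwise inequality with the definition of the Calder\'on--Lozanovski\u{\i} norm and the boundedness of $M$ on $X$ and on $Y$ gives
\begin{align*}
\|\nabla^j f\|_{L^{r,\mu}}\lesssim\|f\|_X^{1-\theta}\|\nabla^k f\|_Y^{\theta}.
\end{align*}
This is also \eqref{eq-GNinter} from \cite{FFRS19}. Finally, the embedding $L^{r,\mu}\hookrightarrow L^{r,\widetilde\mu}$ for $\widetilde\mu\ge\mu$ passes the estimate to the target $L^{r,\widetilde\mu}$.

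For necessity, first apply the inequality to the dilates $f_\lambda:=f(\lambda\,\cdot)$ of a fixed nonzero $f\in C_{\rm c}^\infty$. Lorentz norms scale as $\|g(\lambda\cdot)\|_{L^{a,b}}=\lambda^{-n/a}\|g\|_{L^{a,b}}$, so the left side scales as $\lambda^{j-n/\widetilde r}$. The right side scales as $\lambda$ to the power
\begin{align*}
-\frac{(1-\theta)n}{r_1}+\theta\left(k-\frac{n}{r_2}\right)=j-\frac nr.
\end{align*}
Letting $\lambda\to0$ and $\lambda\to\infty$ forces $\widetilde r=r$. Next, Theorem \ref{thmOP}(ii) with $B:=L^{r,\widetilde\mu}$, which is a rearrangement invariant Banach function space because $r,\widetilde\mu\in(1,\infty)$, yields $L^{r,\mu}=X^{1-\theta}Y^\theta\hookrightarrow L^{r,\widetilde\mu}$.

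The only step that is not routine is showing that this embedding forces $\widetilde\mu\ge\mu$. Suppose instead that $\widetilde\mu<\mu$. Take $f^*(t)=t^{-1/r}[\log(e+t+1/t)]^{-\gamma}$ with $\frac1\mu<\gamma\le\frac1{\widetilde\mu}$, realized as a radial function. Then $\int_0^\infty[\log(e+t+1/t)]^{-\gamma\mu}\frac{dt}t<\infty$ while $\int_0^\infty[\log(e+t+1/t)]^{-\gamma\widetilde\mu}\frac{dt}t=\infty$, so $f\in L^{r,\mu}\setminus L^{r,\widetilde\mu}$, a contradiction. This standard fact (cf.~\cite{BS88}) completes the proof.
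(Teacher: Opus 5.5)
Your proof is correct and follows the paper's argument. Sufficiency comes from the integer order Gagliardo--Nirenberg inequality into $X^{1-\frac jk}Y^{\frac jk}$, the Lorentz product formula \eqref{eq-lor-product}, and monotonicity in the second index; necessity comes from scaling plus Theorem \ref{thmOP}(ii).

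The only differences are two:
\begin{enumerate}[{\rm (i)}]
\item You derive the integer order inequality directly from the Maz'ya--Shaposhnikova pointwise bound and the boundedness of $M$ on $L^{r_i,\mu_i}$ (the route of \cite{FFRS19}), instead of citing \cite[Theorem 1.1]{LRS25}.
\item You make the non-embedding $L^{r,\mu}\not\hookrightarrow L^{r,\widetilde\mu}$ for $\widetilde\mu<\mu$ explicit. Your profile $t^{-1/r}[\log(e+t+1/t)]^{-\gamma}$ need not be non-increasing for every choice of parameters. It is, however, quasi-decreasing and therefore equivalent to its decreasing rearrangement, so the conclusion is unaffected.
\end{enumerate}
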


\begin{proof}
The sufficiency follows from the integer order Gagliardo--Nirenberg
inequality for rearrangement invariant Banach function spaces in
\cite[Theorem 1.1]{LRS25} and the Calder\'on--Lozanovski\u{\i} product
formula \eqref{eq-lor-product} with $s=j/k$. For the necessity, a
scaling argument gives $\widetilde r=r$, while Theorem
\ref{thmOP}{\rm(ii)} gives
$L^{r,\mu}\hookrightarrow L^{r,\widetilde\mu}$. Hence
$\widetilde\mu\ge\mu$, which completes the proof of Theorem
\ref{op-lor-integer}.
\end{proof}

{\bf Orlicz spaces.}\quad
A \emph{Young function} is a convex and non-decreasing function
$\Phi:[0,\infty)\to[0,\infty]$ satisfying $\Phi(0)=0$ and
$\lim_{t\to\infty}\Phi(t)=\infty$. The \emph{Orlicz space} $L^\Phi$ is
defined to be the set of all $f\in L^0$ such that
\begin{align*}
\|f\|_{L^\Phi}
:=
\inf\left\{
\lambda\in (0,\infty):
\int_{\mathbb{R}^n}
\Phi\left(\frac{|f(x)|}{\lambda}\right)\,dx
\le1
\right\}<\infty.
\end{align*}
We use the critical lower and upper type indices
\begin{align*}
r^-_\Phi
&:=
\sup\left\{
\alpha\in  (0,\infty):
\Phi(\lambda t)\lesssim\lambda^\alpha\Phi(t)
\text{ for }\lambda\in(0,1],\ t\in  (0,\infty)
\right\}
\end{align*}
and
\begin{align*}
r^+_\Phi
&:=
\inf\left\{
\alpha\in  (0,\infty):
\Phi(\lambda t)\lesssim\lambda^\alpha\Phi(t)
\text{ for }\lambda\in[1,\infty),\ t\in  (0,\infty)
\right\}.
\end{align*}
If $1<r^-_\Phi\le r^+_\Phi<\infty$, then $L^\Phi$ is a
rearrangement invariant {\rm BBF} space with an absolutely continuous
norm and $p_{L^\Phi}=r^-_\Phi$; see
\cite{DFMN21,NS14,RR02}.

Let $\Phi_1$ and $\Phi_2$ satisfy these assumptions. For
$s\in(0,1)$, let $\Phi_s$ be a Young function satisfying that,
for any $t\in(0,\infty)$,
\begin{align*}
\Phi_s^{-1}(t)
\sim
\left[\Phi_1^{-1}(t)\right]^{1-s}
\left[\Phi_2^{-1}(t)\right]^s
\end{align*}
with the positive equivalence constants independent of $s$.
Then
\begin{align}\label{eq-orlicz-product}
\left(L^{\Phi_1}\right)^{1-s}\left(L^{\Phi_2}\right)^s
=
L^{\Phi_s}
\end{align}
with equivalent norms.

We next state the consequences of the main results in the Orlicz scale.
Let $X:=L^{\Phi_1}$ and $Y:=L^{\Phi_2}$. Then
$p_X=r^-_{\Phi_1}$ and $p_Y=r^-_{\Phi_2}$. Hence, if
$n(
\frac1{r^-_{\Phi_2}}-\frac1q
)
<k,$
Theorem \ref{thmGNS}{\rm(i)} gives estimate \eqref{eq-gns1} with
target $L^{\Phi_s}$. If, in addition, $q<r^-_{\Phi_1}$, then
\eqref{eq-gns2} also holds with the same target.

For the BMO endpoint, let $\Psi_s$ be a Young function satisfying that,
for any $t\in(0,\infty)$,
\begin{align*}
\Psi_s^{-1}(t)
\sim
\left[\Phi_2^{-1}(t)\right]^s
\end{align*}
with the positive equivalence constants independent of $s$.
Then
$(L^\infty)^{1-s}(L^{\Phi_2})^s
=
L^{\Psi_s}$
with equivalent norms. Thus, Theorem \ref{thmGNS}{\rm(ii)} gives the
corresponding BMO endpoint estimate with target $L^{\Psi_s}$.

For each fixed $s\in(0,1)$, Theorem \ref{thm-fs}{\rm(i)} gives
\eqref{eq-XYss} with target $L^{\Phi_s}$ whenever
\begin{align}\label{eq-orlicz-fixed-condition}
n\left(
\frac{1-s}{r^-_{\Phi_1}}
+
\frac{s}{r^-_{\Phi_2}}
-
\frac1q
\right)
<sk.
\end{align}
Theorem \ref{thm-fs}{\rm(ii)} gives the corresponding BMO endpoint
estimate with target $L^{\Psi_s}$ whenever
\begin{align*}
n\left(
\frac{s}{r^-_{\Phi_2}}
-
\frac1q
\right)
<sk,
\end{align*}
and this target is optimal among rearrangement invariant Banach function
spaces.

Theorem \ref{thmBBM} also applies because the Orlicz spaces under
consideration have absolutely continuous norms. Moreover, if
$q<\min\{r^-_{\Phi_1},r^-_{\Phi_2}\}$, then the relevant
convexifications are Banach Orlicz spaces on which the centered ball
average operators are uniformly bounded. Therefore, Theorem
\ref{thmMS} applies under this condition.

The next theorem shows that $L^{\Phi_s}$ is not merely an admissible
target: it characterizes all rearrangement invariant targets for which
the corresponding inequality holds.

\begin{theorem}\label{op-orlicz}
Let $k\in\mathbb{N}$, $s\in(0,1)$, and $q\in[1,\infty)$, and assume that
\eqref{eq-orlicz-fixed-condition} holds. Let $B$ be a rearrangement invariant
Banach function space. Then
there exists a positive constant $C$ such that
\begin{align*}
\left\|\mathfrak D_q^{s,k}(f)\right\|_B
\le C
\|f\|_{L^{\Phi_1}}^{1-s}
\left\|\nabla^k f\right\|_{L^{\Phi_2}}^s
\end{align*}
holds for every
$f\in L^{\Phi_1}\cap\dot W^{k,L^{\Phi_2}}$ if and only if
$L^{\Phi_s}\hookrightarrow B$.
\end{theorem}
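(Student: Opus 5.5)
The plan is to obtain Theorem \ref{op-orlicz} directly from Theorem \ref{thm-fs}(i), specialized to $X:=L^{\Phi_1}$ and $Y:=L^{\Phi_2}$, once the hypotheses are checked and the Calder\'on--Lozanovski\u{\i} product is identified via \eqref{eq-orlicz-product}.

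\emph{Hypotheses.} First I would verify that these $X$ and $Y$ are admissible in Theorem \ref{thm-fs}(i). Under the standing assumption $1<r^-_{\Phi_i}\le r^+_{\Phi_i}<\infty$, each $L^{\Phi_i}$ is a rearrangement invariant Banach function space, and in particular a {\rm BBF} space. Its lower generalized Boyd index is $p_{L^{\Phi_i}}=r^-_{\Phi_i}\in(1,\infty)$. Hence condition \eqref{eq-orlicz-fixed-condition} is exactly the assumption $n(\frac{1-s}{p_X}+\frac{s}{p_Y}-\frac1q)<sk$ of Theorem \ref{thm-fs}(i).

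\emph{Sufficiency.} Suppose $L^{\Phi_s}\hookrightarrow B$. By \eqref{eq-XYss} and \eqref{eq-orlicz-product}, for any $f\in L^{\Phi_1}\cap\dot W^{k,L^{\Phi_2}}$,
\begin{align*}
\left\|\mathfrak D_q^{s,k}(f)\right\|_B
\lesssim\left\|\mathfrak D_q^{s,k}(f)\right\|_{L^{\Phi_s}}
\sim\left\|\mathfrak D_q^{s,k}(f)\right\|_{X^{1-s}Y^s}
\lesssim\|f\|_{L^{\Phi_1}}^{1-s}\left\|\nabla^k f\right\|_{L^{\Phi_2}}^s.
\end{align*}

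\emph{Necessity.} Suppose the inequality holds with target $B$. Theorem \ref{thmOP}(i), applied with $X=L^{\Phi_1}$, $Y=L^{\Phi_2}$ and $B$ (all rearrangement invariant), gives $(L^{\Phi_1})^{1-s}(L^{\Phi_2})^s\hookrightarrow B$. Since \eqref{eq-orlicz-product} holds with equivalent norms, this is equivalent to $L^{\Phi_s}\hookrightarrow B$.

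\emph{Main obstacle.} The only delicate point is the identification \eqref{eq-orlicz-product}, which is used in both directions. If needed, I would justify it by the classical Lozanovski\u{\i}/Gustavsson--Peetre formula for Calder\'on products of Orlicz spaces, whose fundamental function is given by products of inverse Young functions. The remaining point is that the Calder\'on--Lozanovski\u{\i} product norm of rearrangement invariant spaces is again rearrangement invariant. This is automatic here, because the product is identified with the rearrangement invariant space $L^{\Phi_s}$.
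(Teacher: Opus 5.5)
Your proposal is correct and follows the paper's proof: sufficiency from Theorem \ref{thm-fs}(i) combined with the identification \eqref{eq-orlicz-product} and the assumed embedding, and necessity from Theorem \ref{thmOP}(i) followed by \eqref{eq-orlicz-product}. Your extra remark about rearrangement invariance of the Calder\'on--Lozanovski\u{\i} product is harmless but not needed, since Theorem \ref{thmOP}(i) only requires $X$, $Y$, and $B$ themselves to be rearrangement invariant.
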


\begin{proof}
The sufficiency follows from \eqref{eq-orlicz-product}, Theorem
\ref{thm-fs}, and the assumed embedding. The necessity follows from
Theorem \ref{thmOP}{\rm(i)}, which completes the proof of Theorem
\ref{op-orlicz}.
\end{proof}

For $1\le j<k$, let $\Phi_{j,k}$ be a Young function satisfying that,
for any $t\in(0,\infty)$,
\begin{align*}
\Phi_{j,k}^{-1}(t)
\sim
[\Phi_1^{-1}(t)]^{1-\frac jk}
[\Phi_2^{-1}(t)]^{\frac jk}
\end{align*}
with the positive equivalence constants independent of $j$ and $k$.
The following integer order characterization also holds.

\begin{theorem}\label{op-orlicz-integer}
Let $B$ be a rearrangement invariant Banach function space. Then there exists a positive constant $C$ such that
\begin{align*}
\|\nabla^j f\|_B
\le C
\|f\|_{L^{\Phi_1}}^{1-\frac jk}
\left\|\nabla^k f\right\|_{L^{\Phi_2}}^{\frac jk}
\end{align*}
holds for every
$f\in L^{\Phi_1}\cap\dot W^{k,L^{\Phi_2}}$ if and only if
$L^{\Phi_{j,k}}\hookrightarrow B$.
\end{theorem}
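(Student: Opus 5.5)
The argument splits into sufficiency and necessity, mirroring the proofs of Theorems \ref{op-orlicz} and \ref{op-lor-integer}.

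For sufficiency, I would invoke the integer order Gagliardo--Nirenberg inequality for rearrangement invariant Banach function spaces, \cite[Theorem 1.1]{LRS25}, in the form \eqref{eq-GNinter}. It gives
\begin{align*}
\|\nabla^j f\|_{(L^{\Phi_1})^{1-\frac jk}(L^{\Phi_2})^{\frac jk}}
\lesssim \|f\|_{L^{\Phi_1}}^{1-\frac jk}\|\nabla^k f\|_{L^{\Phi_2}}^{\frac jk}
\end{align*}
for all $f\in L^{\Phi_1}\cap\dot W^{k,L^{\Phi_2}}$. Before applying it, I must check that its hypotheses on the spaces hold. This follows from $1<r^-_{\Phi_i}\le r^+_{\Phi_i}<\infty$: the Orlicz spaces are rearrangement invariant and have nontrivial Boyd indices.

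Next, I would identify the Calder\'on--Lozanovski\u{\i} space. By the product formula \eqref{eq-orlicz-product} with $s=j/k$ and $\Phi_s$ replaced by $\Phi_{j,k}$, the target equals $L^{\Phi_{j,k}}$ with equivalent norms. Composing with the assumed embedding $L^{\Phi_{j,k}}\hookrightarrow B$ then gives the inequality with target $B$.

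For necessity, suppose the inequality holds with target $B$. Theorem \ref{thmOP}(ii), applied with $X=L^{\Phi_1}$ and $Y=L^{\Phi_2}$, yields $(L^{\Phi_1})^{1-\frac jk}(L^{\Phi_2})^{\frac jk}\hookrightarrow B$. Its hypotheses require only that $X$, $Y$ and $B$ be rearrangement invariant Banach function spaces, so no inclusion $X\subset Y$ is needed. Applying \eqref{eq-orlicz-product} once more identifies the left-hand space with $L^{\Phi_{j,k}}$, up to equivalent norms, and the embedding follows.

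The main obstacle is making sure the cited results really cover this setting. First, \eqref{eq-orlicz-product} must hold for the exponent $s=j/k$ with $\Phi_{j,k}$ defined through $\Phi_{j,k}^{-1}\sim(\Phi_1^{-1})^{1-j/k}(\Phi_2^{-1})^{j/k}$; this is the standard Calder\'on--Lozanovski\u{\i} product formula for Orlicz spaces, valid for every $s\in(0,1)$. Second, the hypotheses of \cite[Theorem 1.1]{LRS25} must be met. Equivalence of norms is harmless in both directions, since embeddings are preserved under equivalent renorming.
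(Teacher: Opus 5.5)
Your proposal is correct and follows essentially the same route as the paper. Sufficiency comes from \cite[Theorem 1.1]{LRS25} together with the Orlicz Calder\'on--Lozanovski\u{\i} product formula at $s=j/k$ and the assumed embedding, and necessity comes from Theorem~\ref{thmOP}(ii) followed by the same product identification; your remarks on the hypotheses and on equivalent norms only spell out what the paper's two-line proof leaves implicit.
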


\begin{proof}
The sufficiency follows from \cite[Theorem 1.1]{LRS25} and the
Calder\'on--Lozanovski\u{\i} product formula for Orlicz spaces. The necessity
follows from Theorem \ref{thmOP}{\rm(ii)}, which completes the proof of
Theorem \ref{op-orlicz-integer}.
\end{proof}

\begin{remark}
\begin{enumerate}
\item[{\rm(i)}]
When $r_1=r_2$ and $\mu_1=\mu_2$, the above conclusions for Lorentz spaces
corresponding to Theorems \ref{thmGNS} and \ref{thmBBM} reduce to, in the
common range and with $\Omega=\mathbb{R}^n$, the corresponding results
in \cite[Theorem 7.9]{HLYY-ineq}. In the diagonal first order case, the
MS type estimate for Lorentz spaces improves the corresponding Orlicz space case
of \cite[Theorem 2.16(i)]{PYYZ24}. The off-diagonal fractional
inequalities, the BMO endpoint estimates, the off-diagonal BBM and MS
type results, and the optimal target characterization in Theorem
\ref{op-lor} appear to be new. Moreover, Theorem
\ref{op-lor-integer} with $j=1$ and $k=2$ gives an answer to
\cite[Question 2.6]{LRS23}.

\item[{\rm(ii)}]
When $\Phi_1=\Phi_2$, the above conclusions for Orlicz spaces corresponding to
Theorems \ref{thmGNS} and \ref{thmBBM} reduce to, in the common range and
with $\Omega=\mathbb{R}^n$, the corresponding results in
\cite[Theorem 7.11(i)]{HLYY-ineq}. In the diagonal first order case, the
MS type estimate for Orlicz spaces improves the corresponding Orlicz space case
of \cite[Theorem 2.16(i)]{PYYZ24}. The off-diagonal fractional
inequalities, the BMO endpoint estimates, the off-diagonal BBM and MS
type results, and the optimal target characterizations in Theorems
\ref{op-orlicz} and \ref{op-orlicz-integer} appear to be new.
\end{enumerate}
\end{remark}

\subsection{A Sharpened BMO--Lebesgue Estimate}
\label{sec-bmo}

When the outer space is an ordinary Lebesgue space, the order of integration can be used to
obtain a sharper BMO endpoint estimate than a direct application of Theorem \ref{thmGNS}.
This gives the following consequence.

\begin{theorem}\label{thm-L}
Let $k\in\mathbb N$ and $p\in(1,\infty)$. Then, for any
$s\in(\frac1p,1)$ and $f\in\operatorname{BMO}\cap\dot W^{k,sp}$,
\begin{align}\label{eq-lebg}
\int_{\mathbb R^n}\int_{\mathbb R^n}
\frac{|\Delta_h^k f(x)|^p}{|h|^{n+skp}}\,dh\,dx
\lesssim
\frac{1}{(1-s)(sp-1)}
\|f\|_{\operatorname{BMO}}^{(1-s)p}
\int_{\mathbb R^n}|\nabla^k f(x)|^{sp}\,dx,
\end{align}
where the implicit positive constant is independent of $s$ and $f$.
\end{theorem}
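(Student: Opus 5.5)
The plan is to integrate first in $h$ at each fixed $x$, splitting at an $x$-dependent radius, and to integrate in $x$ only at the end. Set $g:=|\nabla^k f|\in L^{sp}$. By Remark \ref{rem-Boyd}(ii), $f\in W^{k,1}_{\rm loc}$, so I may use the Taylor integral representation
\begin{align*}
|\Delta_h^k f(x)|\le |h|^k\int_{[0,1]^k} g\bigl(x+(t_1+\cdots+t_k)h\bigr)\,dt\lesssim |h|^k M(g)(x).
\end{align*}
For $x$ with $0<M(g)(x)<\infty$, set $r(x):=(\|f\|_{\operatorname{BMO}}/M(g)(x))^{1/k}$. If $M(g)(x)=0$, then $g\equiv0$, $f$ is a polynomial of degree less than $k$ and $\Delta_h^kf\equiv0$, so there is nothing to prove. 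I then split the inner integral into $|h|\le r(x)$ and $|h|>r(x)$.

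The inner part is elementary. The Taylor bound gives
\begin{align*}
\int_{|h|\le r(x)}\frac{|\Delta_h^k f(x)|^p}{|h|^{n+skp}}\,dh
\lesssim [M(g)(x)]^p\int_0^{r(x)}\rho^{kp(1-s)-1}\,d\rho
\sim \frac{r(x)^{kp(1-s)}[M(g)(x)]^p}{(1-s)kp}.
\end{align*}
By the choice of $r(x)$, the right-hand side equals a constant times $(1-s)^{-1}\|f\|_{\operatorname{BMO}}^{(1-s)p}[M(g)(x)]^{sp}$. This is where the factor $(1-s)^{-1}$ comes from. The $x$-integration is then the $L^{sp}$ bound for $M$ with exponent $t:=sp\in(1,p)$. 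Its operator norm behaves like $t/(t-1)$ near $t=1$, so $\|M\|_{L^t\to L^t}^{t}\lesssim (t-1)^{-1}$ uniformly for $t\in(1,p)$. This produces the factor $(sp-1)^{-1}$ and gives the full right-hand side of \eqref{eq-lebg} for the inner part.

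The outer part $|h|>r(x)$ is the main obstacle. Here we would like the bound
\begin{align*}
\int_{|h|>r(x)}\frac{|\Delta_h^k f(x)|^p}{|h|^{n+skp}}\,dh\lesssim r(x)^{-skp}\,\|f\|_{\operatorname{BMO}}^{p}\sim\|f\|_{\operatorname{BMO}}^{(1-s)p}[M(g)(x)]^{sp}
\end{align*}
with a constant bounded for $s\in(\frac1p,1)$. This fails pointwise, since $|\Delta_h^kf(x)|$ may grow logarithmically in $|h|$ (compare Proposition \ref{pro-counter}). It has to be recovered after integrating in $x$, and this is where the symmetry in the order of integration is used.

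My first attempt will be to run the dyadic machinery of Proposition \ref{proPoin}(i) with $q=p$, but without passing to maximal functions too early. Lemma \ref{lem-point} bounds $|\Delta_h^kf(x)|^p$ by a sum over shifted dyadic cubes $Q$ with $2^jl(Q)\sim|h|$ of $2^{j\epsilon p}[E_{k-1}(f,Q)]^p$ times indicators. I then use $E_{k-1}(f,Q)\lesssim\|f\|_{\operatorname{BMO}}$ on the factor $[E_{k-1}]^{(1-s)p}$ and the Poincar\'e bound \eqref{eq-TT} with $m=k$ on the factor $[E_{k-1}]^{sp}$. I will keep the $x$- and $h$-integrals together, so that the mismatch term $\mathbf 1_{2^jQ}(x)$ in Case 2 is absorbed by Tonelli instead of by the covering argument \eqref{eq-Qvol}--\eqref{eq-tt}. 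That covering argument is what forced the restrictive parameter condition in Theorem \ref{thmGNS}(ii).

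Concretely, for the outer region the summation over scales $l(Q)\gtrsim r(x)$ gives a geometric series in $|2^jQ|^{-skp/n}$ with ratio $2^{-skp}$, whose sum is bounded uniformly in $s\in(\frac1p,1)$. The factor $2^{j(\epsilon p-skp)}$ is summable once $\epsilon<\frac{k}{2}$, because $skp>k$. After integrating in $x$, what is left is $\|f\|_{\operatorname{BMO}}^{(1-s)p}\int(M(g))^{sp}$, up to the same $(sp-1)^{-1}$ factor as in the inner part. If the bookkeeping of the case $i\ge1$, where $x\in2^jQ$ but $x\notin Q$, still loses a power $2^{jn}$, I will fall back to the following: for each fixed $h$, translate $x\mapsto x-ih$ and use translation invariance of $\int\,dx$. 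This reduces every term to the case $i=0$, which is handled exactly as in Case 1.
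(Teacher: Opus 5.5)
\textbf{There is a genuine gap: your inner-region estimate rests on a false step, and the translation idea must be applied before, not after, the split.}

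The Taylor formula itself is fine. The failure is the bound $\int_{[0,1]^k}g(x+(t_1+\cdots+t_k)h)\,dt\lesssim M(g)(x)$, which is false for $n\ge2$. The left-hand side is a one-dimensional average along the segment from $x$ to $x+kh$, and the Hardy--Littlewood maximal function at $x$ does not control line averages. Integrating over $|h|\le r(x)$ does not repair this.

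Take $k=1$ and $f(y)=\psi((y-y_0)/\delta)$ with $\psi$ a bump of height one, $|x-y_0|=1$, and $\delta\to0$. Then $\|f\|_{\operatorname{BMO}}\sim1$ and $M(g)(x)\sim\delta^{n-1}$, so $r(x)\sim\delta^{1-n}>2$. The true inner integral satisfies $\int_{|h|\le r(x)}|\Delta_hf(x)|^p|h|^{-n-sp}\,dh\sim\delta^n$. Your bound gives only $(1-s)^{-1}\delta^{(n-1)sp}$, which is far smaller when $(n-1)sp>n$. So the inner part, and with it your source of the factor $(1-s)^{-1}$, is not established.

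This example also shows the obstruction is misplaced. It is not the logarithmic growth in $|h|$. Bound the oscillation at scale $2^{-j}|h|$ by $\min\{\|f\|_{\operatorname{BMO}},(2^{-j}|h|)^kM(g)\}$. Beyond the critical radius this produces a factor $1+\log(|h|/\rho)$, which is harmless pointwise because $\int_1^\infty(1+\log t)^pt^{-1-skp}\,dt$ is bounded uniformly once $skp>1$. Proposition \ref{pro-counter} concerns $s\to0^+$ and is irrelevant here. The real obstruction, in both regions, is that $\Delta_h^kf(x)$ samples $f$ at the points $x+ih$, about which $M(g)(x)$ and $r(x)$ say nothing.

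Your outer bookkeeping also fails as written. You put BMO on exactly $[E_{k-1}]^{(1-s)p}$ and Poincar\'e on $[E_{k-1}]^{sp}$. After the $h$-integration, each cube $Q\ni x$ then contributes $2^{-jskp}\|f\|_{\operatorname{BMO}}^{(1-s)p}T_k(f,Q)^{sp}$, with no decay in $l(Q)$. So there is no geometric series over the infinitely many ancestors of $x$. You need either the scale-dependent minimum above or an intermediate exponent $s_0<s$ as in Proposition \ref{proPoin}(i).

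Your fallback is the right idea, but it cannot be run after an $x$-dependent split. Under $z=x+ih$, the cut $|h|>r(x)$ becomes $|h|>r(z-ih)$, so the $i$-th term is not reduced to the case $i=0$. You must translate first, over all $h$, and split afterwards at a radius depending on $z$. This is the paper's proof:
\begin{enumerate}
\item[(1)] By the argument of Lemma \ref{lem-point}, $|\Delta_h^kf(x)|\lesssim\sum_{i=0}^kH(x+ih,|h|)$, where $H(z,r):=\sum_{j\ge0}E_{k-1}(f,B(z,c_k2^{-j}r))$ is built from balls centered at the sampling points.
\item[(2)] Translating reduces the left side of \eqref{eq-lebg} to $\int_{\mathbb R^n}\int_0^\infty H(z,r)^p\,r^{-1-skp}\,dr\,dz$.
\item[(3)] Set $\rho(z):=(\|f\|_{\operatorname{BMO}}/M(g)(z))^{1/k}$. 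For $r\le\rho(z)$, use $H(z,r)\lesssim r^kM(g)(z)$, which produces $(1-s)^{-1}$. For $r>\rho(z)$, use $H(z,r)\lesssim\|f\|_{\operatorname{BMO}}[1+\log(r/\rho(z))]$, which is uniformly harmless.
\item[(4)] Close the $z$-integral with $\|M\|_{L^{sp}\to L^{sp}}^{sp}\lesssim(sp-1)^{-1}$, exactly as you planned.
\end{enumerate}
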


\begin{proof}
Let $f\in {\rm BMO}\cap\dot{W}^{k,sp}$.
If $\|f\|_{\operatorname{BMO}}=0$, then $f$ is constant almost everywhere and the conclusion
is immediate. Thus, we may assume that $\|f\|_{\operatorname{BMO}}\in (0,\infty)$.
By an argument used in the proof of Lemma~\ref{lem-point},
we find that, for any $h\in\mathbb{R}^n\setminus\{{\bf0}\}$ and almost every $x\in \mathbb{R}^n$,
\begin{align}
|\Delta_h^k f(x)|
\lesssim
\sum_{i=0}^k\sum_{j=0}^{\infty}
E_{k-1}\bigl(
f,B(x+ih,c_k2^{-j}|h|)
\bigr),
\label{eq-67-telescoping}
\end{align}
where $c_k\in (0,\infty)$ depends only on $k$.
For any $z\in\mathbb{R}^n$ and $r\in (0,\infty)$,
define
\begin{align*}
H(z,r):=
\sum_{j=0}^{\infty}
E_{k-1}\bigl(f,B(z,c_k2^{-j}r)\bigr).
\end{align*}
From \eqref{eq-67-telescoping} and the change of variables $z=x+ih$, it follows that
\begin{align}
\int_{\mathbb{R}^n}\int_{\mathbb{R}^n}
\frac{|\Delta_h^k f(x)|^p}{|h|^{n+skp}}
\,dh\,dx
&\lesssim
\int_{\mathbb{R}^n}\int_{\mathbb{R}^n}
\frac{H(z,|h|)^p}{|h|^{n+skp}}
\,dh\,dz
\nonumber\\
&\lesssim
\int_{\mathbb{R}^n}\int_0^\infty
H(z,r)^p\frac{dr}{r^{1+skp}}\,dz.
\label{eq-67-reduction}
\end{align}
Applying the higher-order Poincar\'e inequalities, we obtain
\begin{align*}
H(z,r)
\lesssim
\sum_{j=0}^{\infty}
\min\bigl\{
\|f\|_{\operatorname{BMO}},2^{-jk}r^k
M\bigl(|\nabla^k f|\bigr)(z)
\bigr\}.
\end{align*}
For any $z\in\mathbb{R}^n$, let
$\rho(z):=
[\frac{\|f\|_{\operatorname{BMO}}}{M\bigl(|\nabla^k f|\bigr)(z)}]^{\frac{1}{k}}.$
Then
\begin{align*}
H(z,r)
\lesssim
\begin{cases}
r^kM\bigl(|\nabla^k f|\bigr)(z),&0<r\leq\rho(z),\\[2mm]
\|f\|_{\operatorname{BMO}}\left[1+\log\displaystyle\frac{r}{\rho(z)}\right],
&r>\rho(z).
\end{cases}
\end{align*}
Note that
\begin{align*}
\int_0^{\rho(z)}
H(z,r)^p\frac{dr}{r^{1+skp}}
&\lesssim
[M\bigl(|\nabla^k f|\bigr)(z)]^p
\int_0^{\rho(z)}
r^{kp-skp-1}\,dr\\
&=
\frac{
\|f\|_{\operatorname{BMO}}^{(1-s)p}[M\bigl(|\nabla^k f|\bigr)(z)]^{sp}
}{kp(1-s)},
\end{align*}
and
\begin{align*}
\int_{\rho(z)}^\infty
H(z,r)^p\frac{dr}{r^{1+skp}}
&\lesssim
\|f\|_{\operatorname{BMO}}^p\rho(z)^{-skp}
\int_1^\infty
(1+\log t)^pt^{-1-skp}\,dt\notag\\
&\lesssim
\|f\|_{\operatorname{BMO}}^{(1-s)p}[M\bigl(|\nabla^k f|\bigr)(z)]^{sp}.
\end{align*}
These estimates further imply that
\begin{align*}
\int_0^\infty
H(z,r)^p\frac{dr}{r^{1+skp}}
\lesssim
\frac{\|f\|_{\operatorname{BMO}}^{(1-s)p}}{1-s}[M\bigl(|\nabla^k f|\bigr)(z)]^{sp}.
\end{align*}
From this and \eqref{eq-67-reduction}, we deduce that
\begin{align*}
\int_{\mathbb{R}^n}\int_{\mathbb{R}^n}
\frac{|\Delta_h^k f(x)|^p}{|h|^{n+skp}}
\,dh\,dx
\lesssim
\frac{\|f\|_{\operatorname{BMO}}^{(1-s)p}}{1-s}
\int_{\mathbb{R}^n}
\bigl[M(|\nabla^k f|)(x)\bigr]^{sp}\,dx,
\end{align*}
which, together with the quantitative maximal inequality
\begin{align*}
\int_{\mathbb{R}^n}
\bigl[M(|\nabla^k f|)(x)\bigr]^{sp}\,dx
\lesssim
\frac{1}{sp-1}
\int_{\mathbb{R}^n}|\nabla^k f(x)|^{sp}\,dx,
\end{align*}
further implies \eqref{eq-lebg}.
This completes the proof of Theorem \ref{thm-L}.
\end{proof}

\begin{remark}
Theorem \ref{thm-L} when $k=1$ reduces to \cite[Theorem 1]{V23} with
$\Omega=\mathbb R^n$. The higher order results are new.
\end{remark}

\subsection{Further Function Spaces}\label{sec-morespace}

The preceding results can also be applied to several other function
space scales. For variable Lebesgue spaces, we refer to
\cite{CF13,DHR09,KR91,NS12} for the basic theory, to
\cite[Theorem 3.16]{CF13} for the boundedness of maximal operators,
and to \cite[Proposition 3.3]{WYY22} for
Calder\'on--Lozanovski\u{\i} products. Mixed-norm Lebesgue spaces originate
from \cite{Hor60,BP61}, and the structural and operator theoretic
properties needed here are treated in
\cite{CG20,CGN17,CGN19,HY21,HLYY19}. For Orlicz-slice spaces, see
\cite{ZYYW19,AM19,AP17,Ho19,Ho21,Ho22,Ho23,KNTYY07}, and for local
and global generalized Herz spaces, see
\cite{herz,RS20,GLY98,HY99,HS25,HWYY23,LY96,LYH22,ZYZ22}.

In the diagonal case, the resulting Gagliardo--Nirenberg inequalities
and BBM type conclusions reduces to, in the common parameter ranges, the
corresponding results in \cite{HLYY-ineq}. In the diagonal first order
case, the MS type conclusions improve the corresponding cases
of \cite[Theorem 2.16(i)]{PYYZ24}. The off-diagonal fractional
inequalities, the BMO endpoint estimates, and the corresponding
off-diagonal BBM and MS type results appear to be new
also in these function
space scales.

\bigskip

\noindent Pingxu Hu, Yinqin Li, Dachun Yang and
Wen Yuan.

\medskip

\noindent Laboratory of Mathematics and Complex Systems
(Ministry of Education of China),
School of Mathematical Sciences, Institute for Advanced Study,
Beijing Normal University,
Beijing 100875, The People's Republic of China

\smallskip

\noindent {\it E-mails}: \texttt{pingxuhu@mail.bnu.edu.cn} (P. Hu)

\noindent\phantom{\it E-mails }
\texttt{yinqli@mail.bnu.edu.cn} (Y. Li)

\noindent\phantom{\it E-mails }
\texttt{dcyang@bnu.edu.cn} (D. Yang)

\noindent\phantom {\it E-mails }
\texttt{wenyuan@bnu.edu.cn} (W. Yuan)

\end{document}